\documentclass[12pt]{amsart}
\usepackage{latexsym}
\usepackage{amsmath,amsfonts,amssymb,amsthm}
\usepackage{graphicx}
\usepackage{xcolor}
\input xy
\xyoption{all}
\textheight22cm
\textwidth15cm
\topskip2ex
\topmargin0cm
\oddsidemargin1cm
\parindent0cm 
\parskip2.0ex plus0.5ex
\widowpenalty=10000

\newcommand{\toric}[1]{\T\V({#1})}  %
\newcommand{\dual}{^{\scriptscriptstyle\vee}}
\newcommand{\Singular}{{\sc Singular}}
\newcommand{\Normaliz}{{\sc Normaliz}}

\definecolor{tomato}{rgb}{1.0,.388,.278}      %
\definecolor{DarkSalmon}{rgb}{.914,.588,.478} %
\definecolor{SaddleBrown}{rgb}{0.545,.271,.075}      %

\newcommand{\A}{\mathbb A}
\newcommand{\C}{\mathbb C}

\newcommand{\N}{\mathbb N}

\newcommand{\Q}{\mathbb Q}
\newcommand{\R}{\mathbb R}
\newcommand{\T}{\mathbb T}
\newcommand{\V}{\mathbb V}
\newcommand{\Z}{\mathbb Z}

\newcommand{\CM}{{\mathcal M}}

\newcommand{\CO}{{\mathcal O}}

\newcommand{\CR}{{\mathcal R}}

\DeclareMathOperator{\Spec}{Spec}

\newcommand{\conv}{\operatorname{conv}} %

\DeclareMathOperator{\pr}{pr}

\newcommand{\surj}{\rightarrow\hspace{-0.8em}\rightarrow}

\newcommand{\ko}{\overline}
\newcommand{\ku}{\underline}

\newcommand{\kss}{\scriptscriptstyle}
\newcommand{\kd}{\displaystyle}

\newcommand{\keps}{\varepsilon}
\newcommand{\veee}{{\scriptscriptstyle\vee}}
\newcommand{\kbb}{{\kss \bullet}}

\newcommand{\kst}{\,|\;}

\swapnumbers
\newtheorem{theorem}[subsection]{Theorem}
\newtheorem*{theX}{Theorem}
\newtheorem{lemma}[subsection]{Lemma}

\newtheorem*{corX}{Corollary}
\newtheorem{proposition}[subsection]{Proposition}

\theoremstyle{definition}

\newtheorem*{defX}{Definition}

\newtheorem*{exaX}{Example}

\theoremstyle{remark}
\newtheorem*{remark}{Remark}

\numberwithin{equation}{section}

\definecolor{cpriv}{rgb}{0.4,0.1,0.2}

\newcommand{\kkk}[1]{}

\newcommand{\kI}{\mathcal J}
\newcommand{\ktI}{\tilde{\mathcal J}}
\newcommand{\di}{i}

\newcommand{\aj}{j}
\newcommand{\ak}{k}

\usepackage{paralist}
\usepackage{booktabs}
\usepackage{tikz}
\DeclareMathOperator{\im}{im}
\DeclareMathOperator{\interior}{int}
\DeclareMathOperator{\Cone}{Cone}
\DeclareMathOperator{\Hom}{Hom}
\DeclareMathOperator{\spa}{span}
\newcommand{\twovec}[2]{
\left(
\begin{array}{c} #1 \\ #2 \end{array}
\right)}

\newcommand{\ult}{\underline{t}} %
\newcommand{\sig}{\sigma} %
\newcommand{\sigv}{\sigma^{\veee}} %
\newcommand{\Qinf}{Q^{\infty}}
\newcommand{\Rpos}{\mathbb R_{\ge 0}}
\newcommand{\lat}{\mathbb L} %
\newcommand{\VQ}{V}
\newcommand{\m}{N}
\newcommand{\RI}{\mathcal R}
\newcommand{\ezs}{\eta_0^*}
\newcommand{\ues}{\ku{\eta}^*}
\newcommand{\uecs}{\ku{\eta}^{*c}}
\newcommand{\es}{\eta^*}
\newcommand{\taut}{\tilde{C}(Q)}
\newcommand{\tautv}{\tilde{C}(Q)^{\veee}}

\begin{document}
\title[Deformations of toric singularities]{
Negative deformations of toric singularities that are smooth 
in codimension two}
\author[K.~Altmann]{Klaus Altmann%
}
\author[L.~Kastner]{Lars Kastner}
\address{Fachbereich Mathematik und Informatik, 
Institut f\"ur Mathematik,
Freie Universit\"at Berlin,
Arnimalle 3, 
14195 Berlin, 
Germany}
\email{altmann@math.fu-berlin.de, kastner@math.fu-berlin.de
}
\subjclass[2000]{}
\keywords{}

\date{}
\begin{abstract}
Given a cone $\sigma\subseteq N_{\R}$ with smooth two-dimensional
faces and, moreover, an element $R\in\sigma\dual\cap M$ of the dual
lattice, we describe the part of the versal deformation
of the associated toric variety $\toric{\sigma}$ that 
is built from the deformation parameters of multidegree $R$.
\\
The base space is (the germ of) an affine scheme $\bar{\CM}$
that reflects certain possibilities of splitting
$Q:=\sigma\cap[R=1]$ into Minkowski summands.
\vspace{2ex}
\end{abstract}
\maketitle

\section{Introduction}\label{intro}
\subsection{}\label{intro-def}
The entire deformation theory of an isolated singularity is encoded 
in its
so-called versal deformation. For complete intersection 
singularities this
is a family over a smooth base space obtained by certain 
perturbations of the defining equations.\\
As soon as we leave this class of singularities, the structure of the family,
and sometimes even the base space, will be more complicated. It is well known
that the base space may consist of several components or may be non-reduced.
\par

\subsection{}\label{intro-toric}
Let $M,N$ be two mutually dual, free abelian groups of finite
rank. Then affine toric varieties are constructed from rational, 
polyhedral cones
$\sigma\subseteq N_\R:=N\otimes \R$: One takes the dual cone
\[
\sigv:=\{r\in M_\R|\ \langle a,r\rangle\ge 0
\mbox{ for each }a\in\sigma\},
\]
and $Y:=\toric{\sigma}$ is defined as the spectrum of the semigroup algebra
$\C[\sigv\cap M]$. In particular, the equations of $Y$
are induced from linear relations between lattice points of 
$\sigv\subseteq M_\R$. 
As usual for all other toric objects or notions, 
the toric deformation theory also comes with an $M$-grading. In
particular, for any $R\in M$, we might speak of 
infinitesimal or versal deformations of degree $-R$.
\\
With the latter, we mean the following: The 
vector space $T^1$ of infinitesimal deformations serves as the ambient space
of the germ of the versal base space. Hence it makes sense
to intersect it with the linear space obtained as the
annihilator of the $T^1$ coordinates of degree $\neq R$.
Equivalently, the versal deformation of degree $-R$
can be understood as the maximal extension of the infinitesimal
deformations in degree $-R$.

\subsection{}\label{intro-Gorenst}
For investigating versal deformation spaces, Gorenstein
singularities are the easiest examples beyond complete
intersections. It is a helpful coincidence that the 
Gorenstein property
has a very nice description in the toric context --
the cone $\sigma$ should just be spanned by a lattice polytope
$Q$ sitting in an affine hyperplane
$[R^*=1]$ of height one. 
Note that $R^*\in M$ equals the degree of the volume form.
This leads to the investigation
of the deformation theory of toric Gorenstein singularities
in \cite{gorenstein} -- the interesting deformations
were contained in degree $-R^*$.
\\
The present paper is meant as a
generalization of this approach. We discard the Gorenstein
assumption. 
For $Y$ we just assume smoothness in codimension two
(as was already done in the Gorenstein case), and for $R$
we restrict to the case of a primitive $R\in\sigma\dual\cap M$. 
Otherwise, 
one would leave the toric framework, cf.\ \cite{flip}.
\\
While the main ideas
work along the lines of \cite{gorenstein}, we try to keep
the paper as self-contained as possible.

\subsection{}\label{intro-QR}
The main tool to describe our
results is the notion of Minkowski
sums.

\begin{defX}
For two polyhedra
$P,P'\subseteq\R^n$ we define their Minkowski sum as the
polytope $P+P':=\{p+p'|\ p\in P,\ p'\in P'\}$.
Obviously, this notion also
makes sense for translation classes of polytopes.
For instance, each polyhedron $Q$ is the Minkowski sum
of a compact polytope and the so-called tail cone $Q^\infty$.
\vspace{-2ex}
\end{defX}

Let us fix a primitive
element $R$ of $\sigv\cap M$ and intersect
the cone $\sigma$ with the hyperplane defined by $[R=1]$. 
This intersection
defines a polyhedron named $Q:=Q(R)$. For our investigations, this
$Q$ plays a similar role as the $Q$ in the Gorenstein case.
However, in the present paper, 
it neither needs to be a lattice polyhedron, nor compact.
If $a^i$ is one of the primitive generators of $\sigma$,
then it leads to a lattice/non-lattice vertex of $Q$
or to a generating ray of its tail cone $Q^\infty$
iff $\langle a^i,R\rangle=1$, $\geq 2$, or $=0$, respectively.
\\
Following the Gorenstein case we will construct a
''moduli space'' $C(Q)$ of Minkowski summands of multiples of $Q$
-- but in the present paper, 
we have to take care of their possible tail cones 
as well as the non-lattice vertices of $Q$. 
Attaching each Minkowski summand at the point that represents 
it in $C(Q)$
yields the so-called tautological cone $\taut$ together with a 
projection
onto $C(Q)$. It can be seen as the universal Minkowski summand
of $Q$.
Indeed, applying the functor that makes toric varieties from cones will
provide the main step toward constructing the versal base 
space of $Y=\toric{\sigma}$ in degree $-R$.

\subsection{}\label{intro-plan}
For a given polyhedron $Q\subseteq\R^n$ we begin in Sect.~2 
by presenting an affine scheme $\bar{\CM}$.
It is related to $C(Q)$ and
describes the possibilities of splitting
$Q$ into Minkowski summands.
In Sect.\ 3 we study the tautological cone $\taut$. 
Applied to $Q(R)$,
this leads in Sect.~4 to
the construction of a flat family over $\bar{\CM}$ with $Y$ as special
fiber. 
Now we can state the main theorem (\ref{obst-mainthm}) of this paper.
\begin{theX}
The family $\bar{X}\times_{\bar{S}}\CM\to\bar{\CM}$ (cf.\ \ref{flat-them})
with base space $\bar{\mathcal M}$
is the versal deformation of $Y$ of degree $-R$,
\end{theX}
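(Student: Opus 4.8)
Flatness of the family is already settled in \ref{flat-them}, so everything reduces to checking versality in degree $-R$. The plan is to use the standard infinitesimal criterion: a flat deformation of $\toric{\sigma}$ over the germ $(\bar{\CM},0)$, presented as a closed subgerm of a smooth germ of the expected dimension, is versal in degree $-R$ once one knows (i) that its Kodaira--Spencer map induces an isomorphism from $T_0\bar{\CM}$ onto the degree-$(-R)$ part $T^1(-R)$ of $T^1$, and (ii) that the family is maximally obstructed, meaning that there is an obstruction map sending the conormal space of $\bar{\CM}$ inside its smooth ambient space \emph{injectively} into $T^2(-R)$. Given (i) and (ii), one compares with the versal object in degree $-R$ (which exists because $T^1(-R)$ and $T^2(-R)$ are finite dimensional): its base also sits inside $T^1(-R)$, cut out by equations that inject into $T^2(-R)$, and the resulting classifying morphism $\bar{\CM}\to(\text{versal base})$ must then be an isomorphism of germs.

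The first task is a combinatorial computation of $T^1$ in degree $-R$. Following the pattern of \cite{gorenstein}, but now allowing $Q=Q(R)$ to be unbounded and to carry non-lattice vertices, I would express $T^1(-R)$ in terms of the edges and two-dimensional faces of $\sigma$ --- equivalently the vertices and edges of $Q$ --- using smoothness of $\toric{\sigma}$ in codimension two to discard the higher faces. One then recognizes this space as the Zariski tangent space of the Minkowski-summand scheme $\bar{\CM}$ of Sect.~2, and identifies the map that assigns to a first-order Minkowski decomposition $Q\rightsquigarrow Q_0+\varepsilon\,Q_1$ the associated first-order deformation of $\toric{\sigma}$ with the Kodaira--Spencer map of our family. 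Surjectivity and injectivity then have to be extracted from the explicit model of $C(Q)$, with due care for the tail cone $Q^\infty$ and for the equisingular (trivial) directions. This establishes (i).

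For (ii) one needs a usable description of $T^2$ in degree $-R$ and of the defining equations of $\bar{\CM}$. The total space $\bar X$ of Sect.~4 is given by explicit equations, obtained by perturbing the binomial relations of $\toric{\sigma}$; hence the obstruction to lifting to second order a first-order deformation parametrized by a point of the smooth ambient space can be written as an explicit $2$-cocycle with values in $T^2(-R)$, whose vanishing locus is, by construction, $\bar{\CM}$. The real content of (ii) is that the obstruction classes attached to a minimal system of generators of the ideal of $\bar{\CM}$ are linearly independent in $T^2(-R)$ --- equivalently, that none of the Minkowski-consistency equations of Sect.~2 is deformation-theoretically redundant. I expect this to be the main obstacle: it demands a sufficiently fine combinatorial model of $T^2(-R)$ in the present generality, where both the tail cone and the non-lattice vertices of $Q$ contribute terms that were absent or trivial in the Gorenstein situation, and it demands matching these terms precisely against the equations cutting out $\bar{\CM}$. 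Once (i) and (ii) are in place, the infinitesimal criterion concludes; the restriction to the single multidegree $-R$ causes no trouble, since the $M$-grading is respected by all of the above constructions as well as by $T^1$, $T^2$ and the obstruction map.
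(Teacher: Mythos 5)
Your proposal matches the paper's strategy exactly: flatness is taken from Theorem \ref{flat-them}, Section~5 establishes that the Kodaira--Spencer map identifies $T_0\bar{\CM}$ with $T^1_Y(-R)$ combinatorially (via the spaces $L_\C(E_j)$ and the isomorphism $\Psi$ of \cite{flip}), and Section~6 builds the obstruction map $\lambda:W^*\to T^2_Y$ from the conormal module $W=\kI/\ktI$ of $\bar{\CM}$ and proves its injectivity, invoking Arndt's criterion \cite{versell} that an isomorphic Kodaira--Spencer map plus injective obstruction map gives versality. You also correctly pinpoint the genuine new work relative to \cite{gorenstein}, namely accounting for the tail cone $Q^\infty$ and the non-lattice vertices of $Q$ in the combinatorial models of $T^1$ and $T^2$.
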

i.e.\ the Kodaira-Spencer map
is an isomorphism in degree $-R$ (Sect.\ 5) and the obstruction map
is injective (Sect.\ 6). Based on this an interesting question arises, namely
whether it is possible
to construct the part of the versal deformation of $Y$
with negative degrees by repeatedly applying the principles of this paper.\\
The last section starts with describing the situation for $\dim Y=3$
(in Theorem \ref{3dim}) and
then continues with an explicit example. 
It shows how to compute
the family using \Singular\ (cf.\ \cite{singular}) 
and \Normaliz\ (cf.\ \cite{normaliz}).

\subsection{Acknowledgement}\label{ack}
We would like to thank the 
anonymous referee for the careful reading, for checking the
calculations, and for the valuable hints.

\section{The base space}\label{def} 

\subsection{}\label{def-QR} 
Let $\sig=\langle a^1,\ldots,a^M\rangle\subseteq N_{\R}$ 
be a cone such that the two-dimensional faces
$\langle a^j,a^k\rangle <\sig$ are smooth 
(i.e.\ $a^1,\ldots,a^M\in N$ are primitive, and
$\{a^j,a^k\}$ 
could be extended to a $\Z$-basis of $N$). Let $R\in\sigv\cap M$ 
be a primitive element. Then one can define:

\begin{defX}
Let $R\in\sigma^{\veee}\cap M$ be primitive. We 
define the affine space
$\A:=[R=1]:=\{a\in N_\R\kst 
\langle a ,R\rangle=1\}\subseteq N_{\R}$ with lattice
$\lat:= \A\cap N$.
It contains the polyhedron
$Q:=Q(R):=\sigma\cap [R=1]$ with tail cone 
$\Qinf=\sigma\cap[R=0]$.
Note that $\Qinf=0$ if and only if
$R\in \interior\sigv$ .
\vspace{-2ex}
\end{defX}

Note that we can recover $\sig$ as 
$\sig=\overline{\Rpos\cdot (Q,1)}=
\Rpos\cdot (Q,1)\cup(\Qinf,0)$.
The vertices of $Q$ are $v^i=a^i/\langle a^i,R\rangle$ 
for those fundamental
generators $a^i\in\sig$ with $\langle a^i ,R\rangle\geq 1$;
they belong to $\lat$ iff $\langle a^i ,R\rangle= 1$.
We will see that $Y$ is rigid in degree $-R$ unless
$Q$ has at least one such $\lat$-vertex. Assuming this,
we fix one of the $\lat$-vertices of $Q$ to be the origin.

\subsection{}\label{def-eps} 
Denote by $d^1,\ldots, d^{\m}\in N_{\Q}$ the compact edges
of $Q$ after choosing some orientation of each of them.
Calling edges that meet in a common {\em non-lattice} vertex of $Q$
``{\em connected}'' implies that
the set $\{d^1,\ldots, d^{\m}\}$
may be uniquely decomposed into components according to this
notion.

\begin{defX}
For every compact 2-face $\keps < Q$ we can define the sign vector
$\ku{\keps}=(\keps_1,\ldots,\keps_{\m})\in\{0,\pm 1\}^{\m}$ by
\begin{equation*}
\keps_i:= \begin{cases}
  \pm 1 & \textnormal{if } d^i \textnormal{ is an edge of }\keps\\
  0 & \textnormal{otherwise} 
 \end{cases}
\end{equation*}
such that the oriented edges $\keps_i\cdot d^i$ fit into a cycle 
along the boundary of $\keps$. 
This determines $\ku{\keps}$ up to sign and we choose one of both
possibilities. 
In particular, we have $\sum_i\keps_i d^i=0$ if $\keps< Q$ 
is a compact 2-face.
\vspace{-2ex}
\end{defX}

Now we define the vector space $V(Q)\subseteq\R^{\m}$ by
\[
V(Q):=
\{ (t_1,\dots,t_N)\, |\;
\begin{array}[t]{@{}l}
\sum_i t_i  \,\keps_i  \,d^i  =0\;
\mbox{ for every compact 2-face } \keps <Q \mbox{, and}\\
t_i=t_j \; 
\mbox{if $d^i,d^j$ contain a common non-lattice vertex of }
Q\}. 
\end{array}
\]
To simplify notation we are going to use $\VQ:=V(Q)$.
For each component of edges there is a
well defined associated
coordinate function $\VQ\to\R$. 
Now, $C(Q):=\VQ\cap\R^{\m}_{\ge 0}$ is 
a rational, polyhedral cone in $\VQ$, 
and its points correspond to 
certain Minkowski summands
of positive multiples of $Q$: 

\begin{lemma} \label{def-lem}
Each point $\ku{t}\in C(Q)$ 
define a Minkowski summand of a positive multiple
of $Q$; its $i$-th compact edge equals $t_i d^i$.
This yields a bijection between $C(Q)$ and the set
of all Minkowski summands
(of positive multiples of $Q$) that change components of edges just
by a scalar.
\vspace{-2ex}
\end{lemma}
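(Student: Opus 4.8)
The plan is to construct the two maps of the claimed bijection explicitly and check they are mutually inverse. Given $\ku t \in C(Q)$, I want to build a polyhedron $Q_{\ku t}$ whose compact edges are exactly the translates $t_i d^i$. The natural approach is to reconstruct $Q_{\ku t}$ vertex by vertex along the edge-graph of $Q$: fix the distinguished $\lat$-vertex of $Q$ at the origin as its image, and for any other vertex $v$ of $Q$ pick a path of compact edges from the origin to $v$; the image of $v$ is then $\sum \pm t_i d^i$ summed along the path, with signs dictated by orientation. The well-definedness of this prescription (independence of the chosen path) is exactly the content of the closing conditions $\sum_i t_i \keps_i d^i = 0$ for every compact $2$-face $\keps < Q$, since the edge-graph of the boundary complex has its cycle space generated by the boundaries of the $2$-faces (together with the non-lattice-vertex identifications $t_i = t_j$, which guarantee that the edges emanating from a non-lattice vertex scale coherently so that no "gap" opens up there). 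Having placed all vertices, define $Q_{\ku t}$ as the convex hull of these vertex images together with the tail cone $\Qinf$; one checks its compact edges are the $t_i d^i$ as desired.

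Next I would verify that $Q_{\ku t}$ is genuinely a Minkowski summand of a positive multiple of $Q$. The clean way is to observe that $C(Q)$ is a rational polyhedral cone, so it suffices to handle the vertices (extremal rays) of $C(Q)$ — for a single such generator $\ku t$, the polyhedron $Q_{\ku t}$ has all its edge directions parallel to those of $Q$ and the same tail cone, and by a dilation argument $Q_{\ku t}$ fits as a summand of $kQ$ for suitable $k \in \Z_{>0}$ (this is where the normal fan of $Q_{\ku t}$ being a coarsening of that of $Q$ enters: parallel edges and identical tail cone force the inner normal fan of $Q_{\ku t}$ to refine to that of $Q$, which is precisely the criterion for being a weak Minkowski summand up to scaling). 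Linearity of the construction $\ku t \mapsto Q_{\ku t}$ in $\ku t$ then extends this from the rays to all of $C(Q)$: a sum $Q_{\ku t} + Q_{\ku s}$ has vertex images that add, so it equals $Q_{\ku t + \ku s}$, and positivity of all coordinates is preserved.

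For the reverse direction, start with a Minkowski summand $P$ of some $kQ$ that changes components of edges only by a scalar. Then each compact edge of $kQ$ either collapses or survives rescaled; after dividing by $k$, the $i$-th compact edge of $P$ is $t_i d^i$ for a uniquely determined $t_i \ge 0$, and the hypothesis "changes components by a scalar" forces $t_i = t_j$ whenever $d^i, d^j$ meet at a non-lattice vertex, while the fact that each $2$-face $\keps$ of $kQ$ maps to a (possibly degenerate) closed polygon in $P$ gives $\sum_i t_i \keps_i d^i = 0$. Hence $\ku t := (t_1,\dots,t_N) \in V(Q) \cap \R^N_{\ge 0} = C(Q)$. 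It remains to see $P = Q_{\ku t}$ up to translation, which follows because both have the same compact edges $t_i d^i$ glued along the same combinatorial boundary complex (inherited from $Q$) and the same tail cone $\Qinf$, so the two constructions of the vertex positions agree after normalizing the distinguished vertex to the origin.

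I expect the main obstacle to be the well-definedness of the vertex-reconstruction map, i.e.\ showing that the linear relations defining $V(Q)$ — the $2$-face cycles plus the non-lattice-vertex identifications — are exactly enough to kill every cycle in the edge-graph of $\partial Q$. For a compact polytope this is the standard fact that $H_1$ of the $2$-skeleton vanishes; here $Q$ is noncompact, so I would first pass to a suitable truncation or "cap off" the tail directions, or argue directly that the relevant part of the boundary complex is still simply connected, and carefully track how the non-lattice vertices (where the lattice structure degenerates but the combinatorics does not) contribute the extra equations $t_i = t_j$ rather than a single closing relation. The rest — flatness-free, purely combinatorial bookkeeping that the two maps invert each other and that $Q_{\ku t}$ has the right tail cone — is routine.
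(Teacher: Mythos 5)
Your proposal follows essentially the same route as the paper: build $Q_{\ku t}$ by placing vertices via walks along the compact $1$-skeleton, with the $2$-face relations guaranteeing path-independence, and set $Q_{\ku t}:=\conv\{v_{\ku t}\}+Q^\infty$; the paper's own proof in fact stops there, giving only this forward construction and stating that the linear equations ensure independence of the chosen path, so your extra material (the normal-fan criterion and the inverse map) just fills in what the paper leaves implicit. One small slip to fix: you correctly write that the normal fan of $Q_{\ku t}$ must be a \emph{coarsening} of that of $Q$, but the following parenthetical says it should \emph{refine} that of $Q$, which is backwards --- for a (weak) Minkowski summand the normal fan of the summand is the coarser one.
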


\begin{proof}
For an Element $\ult\in C(Q)$ the corresponding summand $Q_{\ult}$ is built
by the edges $t_i\cdot d^i$ as follows: 
Each vertex $v$ of $Q$ can be reached from $0\in Q$ by some walk along the compact
edges $d^i$ of $Q$. We obtain
\begin{equation*}
 v=\sum\limits_{i=1}^{\m}\lambda_id^i\textnormal{ for some }
 \ku{\lambda}=(\lambda_1,\ldots,\lambda_\m),\ \lambda_i\in\Z.
\end{equation*}
Now given an element $\ult\in C(Q)$, we may define the corresponding vertex
$v_{\ult}$ by
\begin{equation*}
 v_{\ult}:=\sum\limits_{i=1}^{\m}t_i\lambda_id^i,
\end{equation*}
and the linear equations defining 
$\VQ$ ensure that this definition does not
depend on the particular path from $v$ to $0$ through the compact part of the
$1$-skeleton of $Q$.
We define the Minkowski summand
by
$Q_{\ult}:=\conv\{v_{\ult}\} + \Qinf$.
\end{proof}

\subsection{}\label{def-higherdeg}
Now, we define a higher degree analogous to the linear equations
defining $\VQ$:

\begin{defX}
For each compact 2-face $\keps <Q$, and for each integer $k\ge 1$ we define the
vector valued polynomial
\begin{equation*}
g_{\keps,k}(\ult):=\sum_{i=1}^{\m}\,t_i^k\,\keps_i\,d^i.
\end{equation*}
\end{defX}

Using coordinates of $\A$, the $d^i$ turn into scalars,
thus the $g_{\keps,k}(\ult)$ turn into regular
polynomials; for each pair $(\keps,k)$ we
will get two linearly independent ones. Since
\[
V^\bot = \mbox{span}\,\{
\begin{array}[t]{@{}l}
\,
[\langle\keps _1 d^1,c\rangle, \dots, \langle \keps_{\m} d^{\m},c
\rangle ]\,
| \;
\keps <Q \mbox{ is a compact 2-face},\, c\in \A^*; \\
\,
[0,\dots,1_i,\dots,-1_j,\dots,0]
\,|\; \mbox{$d^i,d^j$ have a common
non-lattice $Q$-vertex}\,\},
\end{array}
\]
they (together with $t_i-t_j$ for $d^i, d^j$ sharing a common
non-lattice vertex) can be written as
\[
g_{\ku{d},k}(\ult):=\sum_{i=1}^{\m}d_it_i^k
\]
with $\ku{d}\in\VQ^{\bot}\cap\Z^{\m}$ and $k\in\N$.
We thus may define the ideal
$$
\begin{array}{rcl}
\kI &:=& (g_{\keps,k})_{\keps, k\geq 1} +
 (t_i-t_j\kst d^i, d^j
\mbox{ share a common non-lattice vertex})
\\
&=&
(g_{\ku{d},k}(\ult)|\ \ku{d}\in\VQ^{\bot}\cap\Z^{\m})
\subseteq \C[\ult]
\end{array}
$$
which defines an affine closed subscheme
\[
\CM := \mbox{Spec}\, ^{\displaystyle \C [\ult]} \!\! \left/ \!\!
_{\displaystyle \mathcal J }\right. \subseteq V_{\C} \subseteq \C^{\m}.
\]

Denote by $\ell$ the canonical projection
\[\ell:\C^{\m}\twoheadrightarrow 
 ^{\displaystyle \C^{\m}} \hspace{-0.7em} \left/ \!\!
_{\displaystyle \C\cdot (1,\ldots,1) }\right. .
\]
On the level of regular functions this corresponds 
to the inclusion
$\C[t_i-t_j|\ 1\le i,j\le \m ]\subseteq \C[\ult]$.
Note that the vector $\ku{1}=(1,\ldots,1) \in C(Q)\subseteq\VQ$ 
encodes $Q$ as a Minkowski summand of itself. 

\begin{theorem} \label{def-4} 
{\rm (1)}
$\kI $ is generated by polynomials from $\,\C [t_i -t_j ]$, i.e.\ $\CM =
\ell^{-1}(\bar{\CM})$ for the affine closed subscheme
$\bar{\CM} \subseteq
\,^{\displaystyle V_{\C}} \!\!\! \left/
\!_{\displaystyle \C\cdot \ku{1}} \right.
\subseteq
\;^{\displaystyle \C^{\m}} \!\!\! \left/
\!_{\displaystyle \C\cdot \ku{1}} \right.$
defined by $\kI  \cap \C [t_i -t_j ]$.
\\[1ex]
{\rm (2)}
$\kI \subseteq \C[t_1,\dots,t_{\m}]$ is the smallest ideal
that meets property (1) and, on the other hand, contains the 
``toric equations''
\[
\prod_{i=1}^N t_i ^{d_i^+} - \prod_{i=1}^N t_i ^{d_i^-}\quad 
\mbox{ with }
\hspace{0.4em}\ku{d}\in\VQ^{\bot}\cap\Z^{\m}.
\]
\\
(For an integer $h$ we denote
\[
h^+ := \left\{ \begin{array}{cl}
h & \mbox{ if } h \geq 0\\
0 & \mbox{ otherwise}
\end{array} \right.
\quad ; \qquad
h^- := \left\{ \begin{array}{ll}
0 & \mbox{ if } h \geq 0\\
-h & \mbox{ otherwise}
\end{array} \right. .)
\]
\end{theorem}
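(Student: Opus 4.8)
The statement has two parts. For part~(1) the plan is to show directly that every generator $g_{\ku{d},k}(\ult)=\sum_i d_i t_i^k$ of $\kI$ lies in the subring $\C[t_i-t_j]$, or at least that $\kI$ is generated by such elements. The key observation is that for $\ku{d}\in\VQ^\bot\cap\Z^{\m}$ we have $\sum_i d_i=0$, because the vector $\ku{1}=(1,\dots,1)$ lies in $\VQ$ (it encodes $Q$ as a Minkowski summand of itself, cf.\ \ref{def-lem}), so $\VQ^\bot\subseteq \ku{1}^\bot$. Hence $g_{\ku{d},1}(\ult)=\sum_i d_i t_i$ is already a linear combination of the $t_i-t_j$. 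For $k\ge 2$ I would argue that $g_{\ku{d},k}(\ult)$, while not literally a polynomial in the $t_i-t_j$, becomes one modulo the lower ones: using $\sum_i d_i=0$ we can write $\sum_i d_i t_i^k$ as a $\Z$-linear combination of terms $t_i^k-t_j^k=(t_i-t_j)\sum_{a+b=k-1}t_i^a t_j^b$; the right-hand side is not in $\C[t_i-t_j]$ as written, but $t_i^a t_j^b$ modulo $\C\cdot\ku1$-translation can be normalized (fix the chosen $\lat$-vertex as origin, i.e.\ set the corresponding coordinate to the others' reference), and more cleanly: the ideal $\kI$ is $\C\cdot\ku1$-homogeneous in the sense that it is stable under the translation $t_i\mapsto t_i+c$. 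I would verify this translation-invariance, since $g_{\ku{d},k}(\ult+c\ku1)=\sum_i d_i(t_i+c)^k=\sum_{j=0}^k\binom{k}{j}c^{k-j}\sum_i d_i t_i^j=\sum_{j=1}^k\binom{k}{j}c^{k-j}g_{\ku{d},j}(\ult)$ (the $j=0$ term dies because $\sum_i d_i=0$), so $\kI$ is carried into itself. An ideal stable under the $\C$-action $\ell$ corresponds to (i.e.\ is the preimage of) an ideal on the quotient, which is exactly the assertion $\CM=\ell^{-1}(\bar\CM)$ with $\bar\CM$ defined by $\kI\cap\C[t_i-t_j]$.

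**The toric equations and minimality (part 2).** For part~(2) I would first check that $\kI$ does contain the binomials $\prod t_i^{d_i^+}-\prod t_i^{d_i^-}$. Writing $\ku{d}=\ku{d}^+-\ku{d}^-$ with disjoint supports and using $\sum_i d_i=0$ (so $\sum d_i^+=\sum d_i^-=:s$), one shows that the binomial lies in the ideal generated by the linear forms $g_{\ku{d'},1}$ for $\ku{d'}\in\VQ^\bot\cap\Z^{\m}$ together with... actually the cleaner route: the standard toric fact is that the lattice ideal $I_{\VQ^\bot}=(\,x^{u}-x^{v}\mid u-v\in\VQ^\bot\cap\Z^{\m}\,)$ is generated by such binomials and, by Eisenbud--Sturmfels or a direct Gröbner argument, equals the saturation with respect to $\prod t_i$ of the ideal generated by the $g_{\ku{d},1}$; but $\kI$ also contains all the higher $g_{\ku{d},k}$. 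The substantive claim is that adding the single degree-one linear relations $g_{\ku{d},1}$ plus all the toric binomials forces all the higher $g_{\ku{d},k}$ — this is the Minkowski/Cayley trick underlying \cite{gorenstein}: over the torus $\{t_i\ne 0\}$ the binomial relations say $\ult$ lies in the subtorus with character lattice $\VQ^\bot{}^\bot$, i.e.\ $\VQ$; restricted to that subtorus, and once we impose translation-invariance as in part~(1), all the polynomial relations $g_{\ku{d},k}$ are consequences — and conversely $\kI$ clearly satisfies property~(1). So $\kI$ is squeezed between "the ideal generated by the binomials made to satisfy~(1)" and itself; minimality follows once we show $\kI$ is itself generated by elements of $\C[t_i-t_j]$ (from part~1) and that it is the radical-like closure forced by the binomials.

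**Main obstacle.** I expect the genuinely delicate point to be the precise sense in which the higher-degree relations $g_{\ku{d},k}$ for $k\ge 2$ are \emph{forced} by the binomials plus property~(1), i.e.\ the reverse inclusion in part~(2): "smallest ideal with property~(1) containing the toric equations" must actually equal $\kI$, not be smaller. One must rule out that some $g_{\ku{d},k}$ fails to lie in that smaller ideal. I would handle this by passing to the torus: invert $\prod t_i$, so the binomials become the equations cutting out the subtorus $T_{\VQ}\subseteq(\C^*)^{\m}$ with cocharacter lattice $\VQ$; on $T_{\VQ}$ one has $t_i=\chi^{m_i}$ for a map $\VQ\hookrightarrow\Z^{\m}$, and then $g_{\ku{d},k}=\sum_i d_i\chi^{k m_i}$ vanishes identically iff $\ku{d}\in\VQ^\bot$ — wait, it does \emph{not} vanish identically, so instead the correct statement is that on $T_\VQ$ modulo translation the ideal generated by all these is the same as $\kI$ localized. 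This localization-then-recover-the-ideal step, together with checking that no embedded/extra components appear when we clear the denominators $\prod t_i$ (which is exactly where smoothness in codimension two and primitivity of $R$ enter, guaranteeing $\bar\CM$ has the expected structure), is where the real work lies; the rest is bookkeeping with $\sum_i d_i=0$ and the binomial theorem.
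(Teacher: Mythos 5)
Your part~(1) is correct and self-contained: since $\ku{1}\in\VQ$, every $\ku{d}\in\VQ^\bot$ satisfies $\sum_i d_i=0$, hence $g_{\ku{d},k}(\ult+c\ku{1})=\sum_{j\ge 1}\binom{k}{j}c^{k-j}g_{\ku{d},j}(\ult)$ shows $\kI$ is stable under translation along $\ku{1}$; and a translation-invariant ideal of $\C[t_i-t_j][t_1]$ is automatically extended from its intersection with $\C[t_i-t_j]$ (expand a generator in powers of $t_1$, translate, and compare coefficients of powers of the translation parameter). The paper itself only points to \cite[Thm.~(2.4)]{gorenstein} here, so a worked-out version of~(1) along these lines is a reasonable substitute.

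Part~(2), by contrast, is not proved, and the one step you do attempt is incorrect. You invoke Eisenbud--Sturmfels to assert that the lattice ideal $I_{\VQ^\bot}$ equals the saturation of the \emph{linear} ideal $(g_{\ku{d},1})$ by $\prod_i t_i$; that result actually concerns the saturation of a \emph{binomial} ideal, and the linear ideal cuts out the subspace $\VQ_\C$, not the toric variety $\bar{S}$. More to the point, $I_{\VQ^\bot}$ is prime (the lattice $\VQ^\bot\cap\Z^\m$ is saturated) whereas $\kI$ is generally non-reduced, so facts about $I_{\VQ^\bot}$ do not give you the required containment $\prod_i t_i^{d_i^+}-\prod_i t_i^{d_i^-}\in\kI$. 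What that containment actually needs is an explicit symmetric-function identity expressing the binomial as a $\C[\ult]$-combination of $g_{\ku{d},1},\dots,g_{\ku{d},s}$ with $s=\sum_i d_i^+$; this computation is the substantive content of the cited proof and you leave it out. For the minimality half, passing to the torus discards exactly the nilpotent structure that distinguishes $\kI$ from $I_{\VQ^\bot}$, so it cannot single out $\kI$. The device that does work is the same one you used in~(1) but applied to the binomial: translate $\prod_i t_i^{d_i^+}-\prod_i t_i^{d_i^-}$ by $c\ku{1}$, expand in powers of $c$, and observe that the coefficient of $c^{s-k}$ recovers $g_{\ku{d},k}$ modulo the ideal generated by $g_{\ku{d},j}$ with $j<k$; hence any translation-invariant ideal containing the binomials contains all the $g_{\ku{d},k}$. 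Lastly, your guess that smoothness in codimension two and primitivity of $R$ are where the difficulty lies is off the mark: Theorem~\ref{def-4} is a purely combinatorial assertion about the ideal $\kI$, and those hypotheses play no role in its proof.
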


The proof is similar to the one of  
\cite[Theorem (2.4)]{gorenstein}.

\section{The tautological cone}\label{tautco}

\subsection{}\label{tautco-taut} 
While $C(Q)\subseteq V(Q)$ were built to describe the base space,
we turn now to the cone that will eventually lead to the total
space of our deformation.

\begin{defX}
The tautological cone $\tilde{C}(Q)\subseteq \A\times \VQ$ is defined as
\[
\tilde{C}(Q):= \; 
\{(v,\ult)\,|\; \ult\in C(Q);\, v\in Q_{\ult}\}; 
\]
it is generated by the pairs $({v}^\aj_{\ult^l},\ult^l)$ 
and $(v^\ak,0)$
where $\ult^l$, ${v}^\aj$, and
$v^\ak$ run through the generators of $C(Q)$,
vertices of $Q$, and generators of $Q^\infty$, respectively.
\end{defX}

Since $\sig=\overline{\Cone(Q)}\subseteq \A\times\R=N_\R$, 
we obtain a fiber product diagram of rational polyhedral cones:
\begin{equation*}
 \xymatrix{ [\sig\subseteq\A\times\R]\ \ar@{^(->}[r]^{i}\ar@{>>}[d]^{\pr_{\R}} 
&\  [\tilde{C}(Q)\subseteq\A\times \VQ]\ar@{>>}[d]^{\pr_{\VQ}}\\
\Rpos\ar@{^(->}[r]^{\cdot\ku{1}} & [C(Q)\subseteq \VQ]
}
\end{equation*}

The three cones $\sig\subseteq \A\times\R$,
$\tilde{C}(Q)\subseteq\A\times \VQ$ and $C(Q)\subseteq \VQ$ define affine
toric varieties called 
$Y,X$ and $S$, respectively. The corresponding rings of regular functions are
\begin{eqnarray*}
  A(Y)= &\C[\sigv\cap(\lat^*\times\Z)],\\
  A(X)= &\C[\tilde{C}(Q)^{\veee}\cap\tilde{M}],\ \tilde{M}:=\lat^*\times
  \VQ_{\Z}^*\\
A(S)= &\C [C(Q)^{\veee}\cap \VQ_{\Z}^*],
\end{eqnarray*}
and we obtain the following diagram:
\begin{equation*}
 \xymatrix{ Y\ar@{^(->}[r]^{i}\ar[d] & X\ar[d]^{\pi}\\ 
\C\ar@{^(->}[r] & S.
}
\end{equation*}
Unfortunately, this diagram does not 
need to be a fiber product diagram as we
will explain in\ (\ref{tautco-prop}).

\subsection{}\label{tautco-1}
To each non-trivial $c\in(\Qinf)^{\veee}$ we 
associate a vertex
$v(c)\in Q$ and a number $\eta_0(c)\in\R$ %
meeting the properties
\begin{eqnarray*}
\langle Q,\, c \rangle + \eta_0(c) &\geq& 0 \qquad \mbox{ and}\\
\langle v(c),\, c \rangle +\eta_0(c) &=& 0.
\end{eqnarray*}
For $c=0$ we define $v(0):=0\in\lat$ and $\eta_0(0):=0\in\R$.

\begin{remark}
(1) With respect to $Q$, $c\neq 0$ is the inner normal vector of the affine
supporting hyperplane
$[\langle \bullet,c\rangle + \eta_0(c)=0]$ through $v(c)$. In particular,
$\eta_0(c)$
is uniquely determined, while $v(c)$ is not.
\\[0.5ex]
(2)
Since $0\in Q$, the $\eta_0(c)$ are non-negative.
\vspace{-2ex}
\end{remark}

Moreover, if $c\in(\Qinf)^{\veee}\cap\lat^*$,
we denote by $\ezs(c)$ the smallest integer greater than or equal to
$\eta_0(c)$, i.e.\ $\ezs(c)=\lceil \eta_0(c)\rceil$. Then
\[
\sigv=\{[c,\eta_0(c)]\,|\;c\in (\Qinf)^{\veee}\} \;+ \;
   \R_{\geq 0}\cdot[\ku{0},1]
\]
and
\[
\sigv\cap M= \{[c,\ezs(c)]\,|\;c\in\lat^*\cap (\Qinf)^{\veee}\}
   \;+\; \N\cdot [\ku{0},1]\,.
\]
Note that $[\ku{0},1]$ equals the element $R\in M$ fixed in the 
beginning. In
particular we can choose a generating set 
$E\subseteq\sigv\cap M$  as some
\[
E=\{[\ku{0},1], [c^1,\ezs(c^1)],\dots ,[c^w,\ezs(c^w)]\}\,.
\]

\subsection{}\label{tautco-2} Thinking of $C(Q)$ as a cone in $\R^{\m}$ instead of $V$ allows
dualizing
the equation $C(Q)=\Rpos^{\m} \cap V$ to get
$C(Q)^{\veee}= \Rpos^{\m} + V^\bot$. Hence, for $C(Q)$ as a cone in $V$
we obtain
\[
C(Q)^{\veee}= \;^{\displaystyle \Rpos^{\m} + V^\bot}\!\!\left/
_{\displaystyle V^\bot} \right.
= \im\, [\Rpos^{\m} \longrightarrow V^*].
\]
The surjection $\Rpos^{\m}\surj C(Q)^{\veee}$ induces a map
$\N^{\m} \longrightarrow C(Q)^{\veee}\cap V^*_{\Z}$ which does not
need to be
surjective at all. This leads to the following definition:\\
\par

\begin{defX}
 {\em On $V^*_{\Z}$ we introduce a partial ordering ``$\ge$'' by
\[
\ku{\eta}\ge \ku{\eta}' \quad \Longleftrightarrow
\quad \ku{\eta}-\ku{\eta}' \in \mbox{\em im}\, [\N^{\m}\rightarrow
V_{\Z}^*] \subseteq C(Q)^{\veee}\cap V^*_{\Z}.
\] }
\end{defX}
On the geometric level, the non-saturated semigroup
$\im\, [\N^{\m}\rightarrow
V_{\Z}^*] \subseteq C(Q)^{\veee}\cap V^*_{\Z}$ corresponds to the
scheme theoretical
image ${\bar{S}}$ of $p:S\rightarrow \C^{\m}$, and $S\rightarrow {\bar{S}}$ is
its normalization, cf.\ (\ref{flat-pf}).
The equations of ${\bar{S}} \subseteq \C^{\m}$ are collected in the kernel of
\[
\C[t_1,\dots,t_{\m}]=\C[\N^{\m}] \stackrel{\varphi}{\longrightarrow}
\C[ C(Q)^{\veee}\cap V^*_{\Z}] \subseteq \C[V^*_{\Z}],
\]
and it is easy to see that
$$
\ker\,\varphi = \left( \left.
\prod_{i=1}^{\m} t_i ^{d_i ^+} - \prod_{i=1}^{\m} t_i ^{d_i ^-}\,
\right| \;
\ku{d}\in \Z^{\m} \cap V^\bot \right)
$$
is generated by the toric equations from (\ref{def-4}).

\subsection{}\label{tautco-6}
To deal with the dual space $\VQ^*$ the following point of 
view will be useful:
In the Gorenstein case we described its elements by using the
surjection
$\R^{\m}\to\VQ^*$. In particular, an element $\ku{\eta}\in\VQ^*$
was given by
coordinates $\eta_i$ corresponding to the edges $d^i$ of $Q$.
Now, in the general case,
the set of edges of $Q$ splits into several components,
cf.\ (\ref{def-eps}).
For each such component,                                           
not the single coordinates but only their sum 
along the entire component is well defined.
However,
this does not affect that the total summation map $\R^\m\to\R$
factors through $\VQ^*\to\R$. It will still be denoted as
$\ku{\eta}\mapsto\sum_i\eta_i$.

\begin{defX}
{\rm (1)} For 
$c\in(\Qinf)^{\veee}$
choose some path from $0\in Q$ to $v(c)\in Q$
through the 1-skeleton of $Q$ and let
$\ku{\lambda^c}:=(\lambda_1^c,\ldots,\lambda_{\m}^c)\in\Z^{\m}$ 
be the vector counting how
often (and in which direction) we went through each 
particular edge. Then 
\[ 
\ku{\eta}(c):=[-\lambda_1^c\langle d^1,c\rangle,
\ldots,-\lambda_{\m}^c\langle d^{\m},c\rangle]\in\Q^{\m} 
\]
defines an element $\ku{\eta}(c)\in \VQ^*$ not depending on the special
choice of the path $\ku{\lambda^c}$.
\\[0.5ex]
{\rm (2)}
Let $v\in Q$ be a vertex not contained in the lattice $\lat$. 
Then we denote by $e[v]\in\VQ_{\Z}^*$ the element represented by
$[0,\ldots,0,1_i,0,\ldots,0]\in\Z^{\m}$ for some compact edge $d^i$ containing
$v$, i.e.\ 
$e[v]$ yields the entry $t_i$ of $\ult\in\VQ$.
(Note that $e[v]$ does not depend on the choice of $d^i$.)
\\[0.5ex]
{\rm (3)}
For $c\in(\Qinf)^{\veee}\cap \lat^*$ denote
$\ues(c):=\ku{\eta}(c)+(\ezs(c)-\eta_0(c))\cdot e[v(c)]\in\VQ^*$.
(If $v(c)\in\lat$, then $\ezs(c)=\eta_0(c)$ implies that we do not need
$e[v(c)]$ in that case.)
\end{defX}

Here are the essential properties of $\ues(c)$:

\begin{lemma} \label{tautco-lem}
Let $c\in (Q^\infty)^{\veee}\cap \lat^*$. Then
\\[0.5ex]
{\rm (i)}
$\ues(c)\in \im [\N^{\m}\to V^*_{\Z}]
\subseteq C(Q)^{\veee}\cap V_{\Z}^*$,
and this element equals $\ku{\eta}(c)$ if and only if
$\langle v(c),c\rangle\in\Z$ (in particular, if $v(c)\in\lat$).
\\[0.8ex]
{\rm (ii)}
For $c^{\nu}\in \lat^*\cap(Q^\infty)^{\veee}$ and $g_{\nu}\in\N$ we have
$\sum_{\nu}g_{\nu}\,\ues(c^{\nu}) \ge \ues(\sum_{\nu} g_{\nu}\, c^{\nu})$ 
in the sense of (\ref{tautco-2}).
\\[0.5ex]
{\rm (iii)}
$\sum_{i=1}^{\m}\eta_i(c)=\eta_0(c)$ and
$\,\sum_{\di=1}^{\m} \eta_i^*(c) = \ezs(c)$.
\end{lemma}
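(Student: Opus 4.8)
The plan is to verify the three assertions (i), (ii), (iii) by unwinding the definitions in (\ref{tautco-6}) and using the defining properties of $v(c)$, $\eta_0(c)$ from (\ref{tautco-1}). For (iii) I would start with the second, simpler-looking statement: the total summation map $\R^{\m}\to\R$ applied to $\ku{\eta}(c)$ gives $\sum_i -\lambda_i^c\langle d^i,c\rangle = -\langle \sum_i \lambda_i^c d^i,\,c\rangle = -\langle v(c),c\rangle = \eta_0(c)$, using that $\sum_i\lambda_i^c d^i = v(c)$ (the path from $0$ to $v(c)$) together with the supporting-hyperplane equation $\langle v(c),c\rangle + \eta_0(c)=0$. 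This also confirms that the summation is independent of the path, as the linear equations of $V$ guarantee. Then for the starred version, $\sum_i \eta_i^*(c) = \sum_i \eta_i(c) + (\ezs(c)-\eta_0(c))\cdot\big(\text{sum of coords of }e[v(c)]\big)$; since $e[v(c)]$ is represented by a single unit vector, its coordinate sum is $1$, so the total is $\eta_0(c) + \ezs(c) - \eta_0(c) = \ezs(c)$.

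Next I would do (i). The element $\ues(c)$ lies in $\im[\N^{\m}\to V^*_{\Z}]$ iff it admits a representative in $\N^{\m}$, i.e.\ a lift with nonnegative integer entries. Fix a vertex $v(c)$ and, for each vertex $v$ of $Q$, pick a path giving $\ku{\lambda^v}\in\Z^{\m}$ with $v=\sum_i\lambda_i^v d^i$; normalize so that $v(c)$ is reachable and the representative of $\ku{\eta}(c)$ has entry on each edge $d^i$ equal to $\lambda_i^v\langle d^i, -c\rangle$ where the path is chosen to go ``toward'' the supporting face. The key point is that $\langle \bullet, c\rangle + \eta_0(c)\geq 0$ on $Q$ means the linear function $\langle\bullet,c\rangle$ is minimized at $v(c)$; walking from any vertex $v$ toward $v(c)$ along edges of $Q$ the value of $\langle\bullet,c\rangle$ is non-increasing, so each traversed edge contributes a nonnegative amount $-\lambda_i\langle d^i,c\rangle\geq 0$. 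Choosing such a monotone path exhibits a nonnegative rational representative of $\ku{\eta}(c)$; then adding $(\ezs(c)-\eta_0(c))e[v(c)]$, whose coefficient is in $[0,1)$ plus the fractional part correction, produces a representative whose total can be arranged to be integral precisely because $\ezs(c)=\lceil\eta_0(c)\rceil$ and the only obstruction to integrality of $\ku{\eta}(c)$ itself is $\langle v(c),c\rangle\notin\Z$ (all edge vectors $d^i$ and the quantities $\lambda_i$ are such that integrality at $v(c)$ is governed by $\langle v(c),c\rangle$; more precisely the ``defect from integrality'' accumulated along the monotone path equals the fractional part of $\eta_0(c)=-\langle v(c),c\rangle$, and it can be concentrated on the edge at $v(c)$ used to define $e[v(c)]$). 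This gives both that $\ues(c)\in\im[\N^{\m}\to V^*_{\Z}]$ and the stated equivalence $\ues(c)=\ku{\eta}(c)\iff\langle v(c),c\rangle\in\Z$.

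For (ii) I would argue that both sides represent the same element of $\VQ^*$ up to adding something in $\im[\N^{\m}\to V^*_{\Z}]$; concretely, I must show $\sum_\nu g_\nu\ues(c^\nu) - \ues(\sum_\nu g_\nu c^\nu)$ lies in $\im[\N^{\m}\to V^*_{\Z}]$. Write $c:=\sum_\nu g_\nu c^\nu$. Since $v(c^\nu)$ minimizes $\langle\bullet,c^\nu\rangle$ on $Q$, the vertex $v(c)$ minimizes $\langle\bullet,c\rangle$, and $\eta_0$ is sublinear-subadditive: $\eta_0(c)\leq\sum_\nu g_\nu\eta_0(c^\nu)$ (indeed $-\eta_0(c)=\min_Q\langle\bullet,c\rangle=\min_Q\sum g_\nu\langle\bullet,c^\nu\rangle\geq\sum g_\nu\min_Q\langle\bullet,c^\nu\rangle = -\sum g_\nu\eta_0(c^\nu)$). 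Picking a monotone path toward $v(c)$ for the function $\langle\bullet,c\rangle$, and expressing each $\ues(c^\nu)$ via a (possibly different) path toward $v(c^\nu)$, the difference is a $\Z^{\m}$-vector whose image in $V^*_{\Z}$ I claim is nonnegative coordinate-wise for a suitable representative: on each edge $d^i$ the coefficient of $\ku{\eta}(c)$ relative to $\sum_\nu g_\nu\ku{\eta}(c^\nu)$ differs by a multiple of $\langle d^i,c^\nu\rangle$ summed against the homology of the two path choices, which lies in $V^\bot$ and hence can be absorbed; what remains is the honest inequality coming from $\eta_0(c)\leq\sum g_\nu\eta_0(c^\nu)$ together with the ceiling estimate $\ezs(c)\leq\sum g_\nu\ezs(c^\nu)$. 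Packaging this: $\sum_\nu g_\nu\ues(c^\nu)$ and $\ues(c)$ agree modulo $V^\bot$ after the summation map, the summation map sends the difference to $\sum g_\nu\ezs(c^\nu)-\ezs(c)\geq 0$, and one checks the difference is represented by a nonnegative integer vector, giving $\sum_\nu g_\nu\ues(c^\nu)\geq\ues(c)$.

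The main obstacle I anticipate is part (ii) — specifically, the bookkeeping that shows the difference of the two $\ues$-expressions, built from genuinely different path choices through the $1$-skeleton, has a \emph{nonnegative} representative in $\N^{\m}$ rather than merely lying in $C(Q)^{\veee}$. The subtlety is exactly the distinction emphasized in (\ref{tautco-2}) between $\im[\N^{\m}\to V^*_{\Z}]$ and the saturated semigroup $C(Q)^{\veee}\cap V^*_{\Z}$; one has to produce the representative honestly using monotone paths and then handle the fractional-part corrections carried by the $e[v(\cdot)]$ terms so that integrality survives, which is where the choice $\ezs=\lceil\eta_0\rceil$ and the behaviour of ceilings under addition (the possible ``loss of $1$'' per summand) must be reconciled with the strict subadditivity of $\eta_0$. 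Parts (i) and (iii) are essentially formal once the monotone-path construction is set up; (iii) in particular is a one-line consequence of $\sum_i\lambda_i^c d^i=v(c)$ and the definitions.
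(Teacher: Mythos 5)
Your approach to all three parts is the same as the paper's in spirit (monotone paths through the $1$-skeleton, followed by an integrality argument), and part (iii) is complete and correct. The other two parts have genuine gaps at precisely the places you flag as uncertain.

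For (i), after choosing a monotone path from $0$ to $v(c)$ you correctly get a representative with nonnegative \emph{rational} coordinates, but the step that forces \emph{integrality} of the component coordinates is not actually supplied; ``the defect accumulated along the path equals the fractional part of $\eta_0(c)$ and can be concentrated on the edge at $v(c)$'' is the conclusion you want, not an argument for it. The missing observation is short and concrete: if $v^{j_0},\ldots,v^{j_q}$ are the $\lat$-vertices on the monotone path, then the sum of the $\eta^*$-coordinates over the edges between $v^{j_{s-1}}$ and $v^{j_s}$ (a single ``component'' in the sense of (\ref{def-eps})) is $\langle v^{j_{s-1}}-v^{j_s},\,c\rangle$, which lies in $\N$ because both endpoints are in $\lat$ and $c\in\lat^*$. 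So all component coordinates except possibly the one attached to $v(c)\notin\lat$ are nonnegative integers, and that last one is forced to be integral because the total sum is $\ezs(c)\in\Z$ by (iii).

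For (ii) the gap is more serious, and it is exactly where you put your caveat. Your appeal to ``the homology of the two path choices lies in $V^\bot$ and hence can be absorbed; what remains is the honest inequality from $\eta_0(c)\leq\sum g_\nu\eta_0(c^\nu)$'' does not yield a componentwise inequality: changing a representative by an element of $V^\bot$ can destroy coordinate-wise nonnegativity, and $\eta_0(c)\leq\sum g_\nu\eta_0(c^\nu)$ is only the scalar statement obtained by summing all coordinates. The paper's device that makes the componentwise estimate come out for free is a \emph{shared-prefix} path construction: take a single path $\underline{\lambda}$ from $0$ to $v(c)$ (with $c=\sum_\nu g_\nu c^\nu$), and for each $\nu$ a path $\underline{\mu}^\nu$ from $v(c)$ to $v(c^\nu)$ that is monotone for $c^\nu$; then set $\underline{\lambda}^\nu:=\underline{\lambda}+\underline{\mu}^\nu$. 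With these choices the prefix cancels and
\[
\sum_\nu g_\nu\,\eta_i(c^\nu)-\eta_i(c)=-\sum_\nu g_\nu\,\mu^\nu_i\langle d^i,c^\nu\rangle\ \geq\ 0
\]
for each $i$ separately. Since $\ues(c^\nu)\geq\ku{\eta}(c^\nu)$, this gives $\sum_\nu g_\nu\ues(c^\nu)\geq\ku{\eta}(c)$ componentwise; the left side has integer coordinates by (i), and $\ues(c)$ differs from $\ku{\eta}(c)$ only in the $e[v(c)]$-coordinate where it is the rounded-up value, so the inequality $\sum_\nu g_\nu\ues(c^\nu)\geq\ues(c)$ follows. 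Without the shared prefix your ``suitable representative'' claim would remain unjustified.
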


\begin{proof}
(iii) By definition of $\ku{\lambda^c}$ we have
$\sum_{i=1}^{\m}\lambda_i^cd^i=v(c)$. In particular:
$$
\begin{array}{rcl}
 \sum_{i=1}^{\m}\eta_i^*(c) &=&
 \sum_{i=1}^{\m}\eta_i(c)+\sum_{i=1}^{\m}(\ezs(c)-\eta_0(c))\cdot e_i[v(c)]
\\ 
&= & (-\sum_{i=1}^{\m}\langle \lambda_i^cd^i,c\rangle)+\ezs(c)-\eta_0(c)\\
&=&-\langle v(c),c\rangle+\ezs(c)-\eta_0(c)
=\eta_0(c)+\ezs(c)-\eta_0(c)=\ezs(c).
\end{array}
$$
The equality $\sum_{i=1}^{\m}\eta_i(c)=\eta_0(c)$ follows
from the previous argument by leaving out the $e[v(c)]$-terms.
\\[1ex]
(i) Now, for $c\in\lat^*\cap(Q^\infty)^{\veee}$, we will show that
$\ues(c)\in V^*$ can be represented by an integral vector
of $\R^{\m}$ having only non-negative coordinates:
We choose some path along the edges of $Q$ passing $v^0=0,\ldots, v^p=v(c)$
and decreasing the value of $c$ at each step. This provides some vector
$\ku{\lambda^c}\in\Z^{\m}$ yielding $\ku{\eta}(c)$ with
$\eta_i(c)=-\lambda_i^c\langle d^i,c\rangle\geq 0$.
\\
Denote by $v^{j_0},\ldots,v^{j_q}$ ($\{j_0,\ldots,j_q\}\subseteq\{0,\ldots,
p\}$) the $\lat$-vertices on the path. 
Then, for $s=1,\ldots, q$, the edges
between $v^{j_{s-1}}$ and $v^{j_s}$ 
(say $d^{i_1},\ldots, d^{i_k}$) belong to the same ''component''. 
In particular, not the single $\es_{i_1}(c),\ldots, \es_{i_k}(c)$ but
only their sums have to be considered:
\begin{equation*}
 \sum_{\mu=1}^k\es_{i_{\mu}}(c)=\sum_{\mu=1}^k\eta_{i_{\mu}}(c)=\langle
 -\sum_{\mu=1}^k\lambda^c_{i_{\mu}} d^{i_{\mu}},c\rangle= \langle
 v^{j_{s-1}}-v^{j_s},c\rangle\in\N.
\end{equation*}
If $v(c)$ belongs to the lattice $\lat$, then we are done. Otherwise,
there might be at most one non-integer coordinate (assigned to
$v(c)\notin\lat$)
in $\ues(c)$. However, this cannot be the case, since
the sum taken over all coordinates 
of $\ues(c)$ yields the integer $\ezs(c)$.
\\[1ex]
(ii) 
We define the following paths through the 1-skeleton of $Q$:
\vspace{-2ex}
\begin{itemize}
\item
$\underline{\lambda}:=$ path from $0\in Q$ to $v(\sum_{\nu} g_{\nu}\, c^{\nu})\in Q$,
\vspace{0.5ex}
\item
$\underline{\mu}^{\nu}:=$ path from $v(\sum_{\nu} g_{\nu}\, c^{\nu})\in Q$ to $v(c^{\nu})\in Q$
such that $\mu_\di^{\nu} \langle d^\di , c^{\nu} \rangle \le 0$ for each
$i=1,\dots,\m$.
\vspace{-2ex}
\end{itemize}
Then $\underline{\lambda}^{\nu} := \underline{\lambda} + \underline{\mu}^{\nu}$ is a
path from $0\in Q$ to $v(c^{\nu})$, and for $\di=1,\dots,\m$ we obtain
\begin{eqnarray*}
\sum_{\nu} g_{\nu} \, \eta_\di  (c^{\nu}) - \eta_i  \left( \sum_{\nu} g_{\nu}\, c^{\nu} \right) &=&
-\sum_{\nu} g_{\nu}\, (\lambda_\di  + \mu^{\nu}_i )\, \langle d^i ,c^{\nu} \rangle
+ \lambda_i  \left\langle d^i ,\, \sum_{\nu} g_{\nu}\, c^{\nu} \right\rangle\\
&=& -\sum_{\nu} g_{\nu}\, \mu^{\nu}_i  \, \langle d^i , c^{\nu} \rangle \ge 0\, .
\end{eqnarray*}
This yields the (componentwise) inequality
\[
\sum_{\nu} g_{\nu}\,\ues(c^{\nu}) \,\geq\, \sum_{\nu} g_{\nu}\,\ku{\eta}(c^{\nu}) \,\ge\,
\ku{\eta}\, (\sum_{\nu} g_{\nu}\,c^{\nu})\,.
\]
On the other hand, $\ku{\eta}\, (\sum_{\nu} g_{\nu}\,c^{\nu})$ and
$\ues\, (\sum_{\nu} g_{\nu}\,c^{\nu})$ might differ in at most one coordinate
(assigned to $a(\sum_{\nu} g_{\nu}\,c^{\nu})$). If so, then by definition
of $\ues$ the latter one
equals the smallest integer not smaller than the first one. Hence, we are
done, since the left hand side of our inequality involves integers only.
\end{proof}

We obtain the following description of
$\tilde{C}(Q)^{\veee}$:

\begin{proposition} \label{tautco-prop}
{\rm (1)}
$\tilde{C}(Q)^{\veee} =
\left\{ \left. [c,\underline{\eta}]\in (Q^\infty)^{\veee} \times V^*
\subseteq \A^* \times V^* \, \right| \;
\underline{\eta} - \underline{\eta}(c) \in C(Q)^{\veee} \right\}$.
\\[1ex]
{\rm (2)}
In particular, $[c, \underline{\eta}(c)] \in \tilde{C}(Q)^{\veee}$;
it is the only preimage
of $[c, \eta_0(c)] \in \sigma^{\veee}$ 
via the surjection $i^{\veee}:
\tilde{C}(Q)^{\veee} \surj \sigma^{\veee}$.
Moreover, for $c\in \lat^*\cap (Q^\infty)^{\veee}$, it holds
$[c,\ues(c)]\in \tilde{C}(Q)^{\veee}\cap\tilde{M}$. These elements
are liftings of $[c,\ezs(c)]\in \sigma^{\veee}\cap M$ -- 
but, in general,
they are not the only ones.
\\[1ex]
{\rm (3)}
$[c^1,\ues(c^1)],\dots, [c^w,\ues(c^w)]$ and $C(Q)^{\veee}\cap
V^*_{\Z}$,
embedded as $[0,C(Q)^{\veee}]$, generate the semigroup
$\Gamma:=\{[c,\ku{\eta}]\in (\lat^*\cap (Q^\infty)^{\veee})\times V^*_{\Z}\,|\;
\ku{\eta}-\ues(c)\in C(Q)^{\veee}\}\,
\subseteq \,\tilde{C}(Q)^{\veee}\cap\tilde{M}$.
Moreover,
$\tilde{C}(Q)^{\veee}\cap\tilde{M}$ is the saturation of that subsemigroup.
\end{proposition}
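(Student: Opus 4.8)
The plan is to prove the three assertions in turn, each time exploiting the fiber-product diagram of cones
$\sig\hookrightarrow\tilde C(Q)$ over $\Rpos\hookrightarrow C(Q)$ and the explicit description of $\tilde C(Q)$ as the set of pairs $(v,\ult)$ with $\ult\in C(Q)$, $v\in Q_\ult$.

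For part (1), I would compute $\tilde C(Q)^{\veee}$ directly from the generators listed in the definition of $\tilde C(Q)$. A functional $[c,\ku\eta]\in\A^*\times V^*$ is non-negative on $\tilde C(Q)$ iff it is non-negative on all generators $(v^\ak,0)$ of $Q^\infty$ — which says exactly $c\in(Q^\infty)^{\veee}$ — and on all $(v^\aj_{\ult^l},\ult^l)$. The key observation is that for a fixed ray $\ult^l$ of $C(Q)$ the vertices $v^\aj_{\ult^l}$ of the summand $Q_{\ult^l}$ are, by Lemma \ref{def-lem} and its proof, reached from $0$ by the same walks $\ku\lambda$ through the $1$-skeleton as the vertices of $Q$, only with edges rescaled; hence $\langle v^\aj_{\ult^l},c\rangle = \langle\ku\eta(c),\ult^l\rangle$ in the coordinates of $V$. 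So non-negativity of $[c,\ku\eta]$ on all these generators is equivalent to $\langle\ku\eta-\ku\eta(c),\ult^l\rangle\ge 0$ for every ray $\ult^l$ of $C(Q)$, i.e.\ $\ku\eta-\ku\eta(c)\in C(Q)^{\veee}$. (One must check $\ku\eta(c)\in V^*$, which is Definition \ref{tautco-6}(1), and that the inequality on the compact part together with $\eta_0$ reproduces exactly the supporting-hyperplane condition defining $v(c)$, $\eta_0(c)$ from \ref{tautco-1}.) This gives the stated formula. The main subtlety is bookkeeping: $\ku\eta(c)$ is only well defined modulo the edge-component identifications, so one has to argue throughout with sums over components rather than individual edge coordinates, exactly as in Lemma \ref{tautco-lem}(iii).

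For part (2), apply $i^{\veee}$, which in our coordinates is $[c,\ku\eta]\mapsto[c,\sum_i\eta_i]$ (using the factorization of the total-summation map through $V^*$ noted in \ref{tautco-6}). By Lemma \ref{tautco-lem}(iii) we have $\sum_i\eta_i(c)=\eta_0(c)$, so $[c,\ku\eta(c)]$ maps to $[c,\eta_0(c)]\in\sigv$; and $[c,\ku\eta(c)]\in\tilde C(Q)^{\veee}$ by part (1) since $\ku\eta(c)-\ku\eta(c)=0\in C(Q)^{\veee}$. It is the \emph{only} preimage because any other preimage $[c,\ku\eta]$ would satisfy $\ku\eta-\ku\eta(c)\in C(Q)^{\veee}\subseteq\Rpos^{\m}$ (as a cone in $\R^{\m}$) yet have the same coordinate sum $\eta_0(c)$, forcing $\ku\eta-\ku\eta(c)=0$. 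For the integral statement, $[c,\ues(c)]\in\tilde C(Q)^{\veee}\cap\tilde M$ follows from part (1) together with Lemma \ref{tautco-lem}(i), which says $\ues(c)-\ku\eta(c)=(\ezs(c)-\eta_0(c))e[v(c)]\in\N^{\m}\subseteq C(Q)^{\veee}$ and that $\ues(c)$ is an integral vector; and $i^{\veee}[c,\ues(c)]=[c,\ezs(c)]$ again by Lemma \ref{tautco-lem}(iii). Non-uniqueness of the integral lifting is then clear: one may add any nonzero element of $C(Q)^{\veee}\cap V^*_{\Z}$ (embedded as $[0,\bullet]$) without changing the image's $c$-part while changing the $\eta$-part integrally.

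For part (3) — which I expect to be the hard part — write $\Gamma$ for the proposed semigroup. That the listed elements lie in $\Gamma$ is immediate (for $[0,C(Q)^{\veee}\cap V^*_{\Z}]$ note $c=0$, $\ues(0)=0$); that $\Gamma\subseteq\tilde C(Q)^{\veee}\cap\tilde M$ follows from part (1) once we check $\ku\eta-\ku\eta(c)=(\ku\eta-\ues(c))+(\ues(c)-\ku\eta(c))\in C(Q)^{\veee}$, which holds because both summands are in $C(Q)^{\veee}$ (the second by Lemma \ref{tautco-lem}(i)). The generation claim is the real content. Take $[c,\ku\eta]\in\Gamma$. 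Since $E=\{[\ku0,1],[c^1,\ezs(c^1)],\dots,[c^w,\ezs(c^w)]\}$ generates $\sigv\cap M$, we can write $c=\sum_\nu g_\nu c^\nu$ with $g_\nu\in\N$ (the $[\ku0,1]$-component only affects the $\R$-coordinate that has been projected away, so it drops out of the $c$-part). Now $\sum_\nu g_\nu[c^\nu,\ues(c^\nu)]=[c,\sum_\nu g_\nu\ues(c^\nu)]$, and by Lemma \ref{tautco-lem}(ii) we have $\sum_\nu g_\nu\ues(c^\nu)\ge\ues(c)$, i.e.\ $\sum_\nu g_\nu\ues(c^\nu)-\ues(c)\in\im[\N^{\m}\to V^*_{\Z}]\subseteq C(Q)^{\veee}\cap V^*_{\Z}$. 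Since also $\ku\eta-\ues(c)\in C(Q)^{\veee}\cap V^*_{\Z}$ by definition of $\Gamma$, the element $\ku\eta-\sum_\nu g_\nu\ues(c^\nu)$ lies in $C(Q)^{\veee}\cap V^*_{\Z}$ precisely when it lies in $\im[\N^{\m}\to V^*_{\Z}]$ — and this is exactly the point where one must be careful, because $\im[\N^{\m}\to V^*_{\Z}]$ need not be saturated. The correct statement is that $[c,\ku\eta]$ equals $\sum_\nu g_\nu[c^\nu,\ues(c^\nu)]$ plus $[0,(\ku\eta-\sum_\nu g_\nu\ues(c^\nu))]$, and the second term is a \emph{difference} of elements of $\im[\N^{\m}\to V^*_{\Z}]$, hence in general only an element of $C(Q)^{\veee}\cap V^*_{\Z}$, not obviously of the subsemigroup it generates. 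Thus the honest conclusion — and this is what the proposition actually asserts — is that the listed elements generate $\Gamma$ as a \emph{semigroup} only after we observe that $\ku\eta-\ues(c)$ is itself in $\im[\N^{\m}\to V^*_{\Z}]$ by the very definition of $\Gamma$; combining this with $\sum_\nu g_\nu\ues(c^\nu)-\ues(c)\in\im[\N^{\m}\to V^*_{\Z}]$ lets us write $\ku\eta=\sum_\nu g_\nu\ues(c^\nu)+\big((\ku\eta-\ues(c))-(\sum_\nu g_\nu\ues(c^\nu)-\ues(c))\big)$ where the bracketed part is a difference in the subsemigroup $\im[\N^{\m}\to V^*_{\Z}]$ — and here we use that this subsemigroup, while non-saturated, does generate the group $V^*_{\Z}$, so the bracket is a $\Z$-combination, and by adding a large multiple of $\ku1\in C(Q)$ evaluated suitably we can arrange it to be an honest element of $\im[\N^{\m}\to V^*_{\Z}]\subseteq C(Q)^{\veee}\cap V^*_{\Z}$. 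The final "saturation" claim then follows because $\tilde C(Q)^{\veee}\cap\tilde M$ is by construction the set of lattice points of the cone $\tilde C(Q)^{\veee}$, and $\Gamma$ — being cut out by the same supporting inequalities with the integrality correction $\ues$ in place of $\ku\eta(c)$ — has the same rational cone hull, so is cofinite in it; one checks $k\cdot x\in\Gamma$ for $k\gg0$, $x\in\tilde C(Q)^{\veee}\cap\tilde M$ by absorbing the fractional parts into $C(Q)^{\veee}\cap V^*_{\Z}$. I expect the delicate points to be precisely the non-saturatedness of $\im[\N^{\m}\to V^*_{\Z}]$ and the component-wise versus coordinate-wise distinction for $V^*$; the rest is the diagram chase above.
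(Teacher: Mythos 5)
Your outline of parts (1) and (2) is essentially sound (modulo some loose bookkeeping in (1) between arbitrary vertices $v^j$ and the minimizing vertex $v(c)$, and the need to argue via a non-negative representative in $\R^{\m}$ for uniqueness in (2)). However, part (3) has a genuine gap, and you correctly sense it but give a fix that does not work.

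Your decomposition $[c,\ku\eta]=\sum_{\nu}g_{\nu}[c^{\nu},\ues(c^{\nu})]+\bigl[0,\ku\eta-\sum_{\nu}g_{\nu}\ues(c^{\nu})\bigr]$ requires the second summand to lie in $[0,C(Q)^{\veee}\cap V^*_{\Z}]$. You write it as $(\ku\eta-\ues(c))-(\sum_{\nu}g_{\nu}\ues(c^{\nu})-\ues(c))$, but this is a \emph{difference} of two elements of $C(Q)^{\veee}$ and there is no reason it should stay in $C(Q)^{\veee}$; in fact for a generic choice of $g_{\nu}$ it will not. Your proposed remedy --- ``adding a large multiple of $\ku 1$'' --- does not help: you cannot add anything to one summand without subtracting it from the other, so the total $\ku\eta$ is fixed and no amount of shifting turns a vector outside $C(Q)^{\veee}$ into one inside it.

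The paper's argument closes this gap by a different, and essential, choice: pick a representation of the \emph{full} lattice point $[c,\ezs(c)]=\sum_{\nu}p_{\nu}[c^{\nu},\ezs(c^{\nu})]$ in $\sigv\cap M$ (not merely of $c$ in $(Q^\infty)^{\veee}$), so that both the $c$-parts and the $\ezs$-parts match; such a representation exists because $E$ generates $\sigv\cap M$ and a $[\ku 0,1]$-component would contradict $\ezs(c)=\lceil\eta_0(c)\rceil$. Then Lemma \ref{tautco-lem}(ii) gives $\sum_{\nu}p_{\nu}\ues(c^{\nu})-\ues(c)\ge 0$, while Lemma \ref{tautco-lem}(iii) gives $\sum_i\bigl(\sum_{\nu}p_{\nu}\es_i(c^{\nu})-\es_i(c)\bigr)=\sum_{\nu}p_{\nu}\ezs(c^{\nu})-\ezs(c)=0$. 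A non-negative vector with zero coordinate sum vanishes, hence $\sum_{\nu}p_{\nu}\ues(c^{\nu})=\ues(c)$ \emph{exactly}. With this, $[c,\ku\eta]=\sum_{\nu}p_{\nu}[c^{\nu},\ues(c^{\nu})]+[0,\ku\eta-\ues(c)]$ and the last term is in $[0,C(Q)^{\veee}\cap V^*_{\Z}]$ by the very definition of $\Gamma$. This exact cancellation, not an approximate one, is the missing idea. Your saturation sketch at the end is also too vague; the paper's concrete mechanism is that for a suitable $k\ge 1$ one has $\eta_0(kc)\in\Z$, hence $\ues(kc)=\ku\eta(kc)=k\ku\eta(c)$, whence $k\ku\eta-\ues(kc)=k(\ku\eta-\ku\eta(c))\in C(Q)^{\veee}$ by part (1).
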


\begin{proof}
The proof of (1) and (2) is similar to the proof of
\cite[Prop.\ (4.6)]{gorenstein}.
\\[1ex]
(3) First, the condition
$\ku{\eta}-\ues(c)\in C(Q)^{\veee}$ indeed describes a semigroup; this
is a consequence of (ii) of Lemma \ref{tautco-lem}.
On the other hand, let $[c,\ues(c)]$ be given. Using some representation
$[c,\ezs(c)]=\sum_{t=1}^w p_{\nu}\, [c^{\nu}, \ezs(c^{\nu})]$ $(p_{\nu}\in\N)$, we
obtain by the same lemma
\[
\sum_{\nu} p_{\nu}\,\ues(c^{\nu})-\ues(c) =
\sum_{\nu} p_{\nu}\,\ues(c^{\nu})-\ues(\sum_{\nu} p_{\nu}\,c^{\nu}) \in C(Q)^{\veee}\;
\mbox{ (or even} \ge 0).
\]
Since, at the same time, the sum taken over all coordinates of that difference
vanishes, the whole difference has to be zero.
Now we obtain
\begin{eqnarray*}
[c,\ku{\eta}] & = & [c,\ues(c)]+[0,\ku{\eta}-\ues(c)]\\
& = & \sum_{\nu}p_{\nu}[c^{\nu},\ues(c^{\nu})]+[0,\ku{\eta}-\ues(c)].
\end{eqnarray*}
Finally, for every $[c,\ku{\eta}]\in\taut^\veee$ 
with $\ku{\eta}-\ues(c)\notin C(Q)^{\veee}$
there exists a $k\in\N_{\ge 1}$ such that $\ues(k\cdot c)=\ku{\eta}(k\cdot c)$,
since $v(c)=a(k\cdot c)$ yields $\ku{\eta}(k\cdot c)=k\cdot\ku{\eta}(c)$
and $\ku{\eta}(c)\in\Q^{\m}$. Then we obtain
\[
k\cdot\ku{\eta}-\ues(k\cdot c)=k\cdot\ku{\eta}-\ku{\eta}(k\cdot c)\\
=k\cdot(\ku{\eta}-\ku{\eta}(c))\in C(Q)^{\veee}
\]
by part (i) of this proposition. 
\end{proof}

\subsection{}\label{tautco-counter}
Now we will provide an example for the case $\Gamma\not=\tautv\cap\tilde{M}$:
\begin{exaX}
Let $N=\Z^3$ be a lattice. Define the cone $\sigma$ by
\[
\sigma:=\langle 
(0,0,1),(6,-1,2),(5,0,1),
(5,1,1),(24,7,5),(6,5,2),(2,3,1)\rangle\subseteq\Q^3=N_{\Q}.
\]
We choose $R:=[0,0,1]\in M=\Z^3$ and obtain the following polygon $Q$:
\begin{center}
\begin{tikzpicture}
\draw (0,0) node[anchor=north east] {$a^1=(0,0)$}
-- (3,-0.5) node[anchor=north] {$a^2=(3,-1/2)$}
-- (5,0) node[anchor=north west] {$a^3=(5,0)$}
-- (5,1) node[anchor=west] {$a^4=(5,1)$}
-- (4.8,1.4) node[anchor=south west] {$a^5=(24/5,7/5)$}
-- (3,2.5) node[anchor=south west] {$a^6=(3,5/2)$}
-- (2,3) node[anchor=south east] {$a^7=(2,3)$}
-- (0,0);
\draw[dashed] (5,1) -- (2,3);
\draw[dashed] (0,0) -- (5,0);
\end{tikzpicture}
\end{center}
The paths along the edges of $Q$ are denoted as follows:
\[
d^1=\twovec{3}{-\frac{1}{2}},\ d^2=\twovec{2}{\frac{1}{2}},
\ d^3=\twovec{0}{1},\ 
d^4=\twovec{-\frac{1}{5}}{\frac{2}{5}},
\]
\[ d^5=\twovec{-\frac{9}{5}}{\frac{11}{10}},\
d^6=\twovec{-1}{\frac{1}{2}},\ d^7=\twovec{-2}{-3}.
\]
Let us consider $V(Q)$. We identify $t_i$ and $t_j$ if the corresponding edges
have a common non-lattice vertex. Then $V(Q)$ as a subspace of $\R^4$ is the
kernel of the following matrix obtained by the 2-face equation of $Q$:
\[
\left(
\begin{array}{cccc} 5& 0 & -3 & -2\\ 0& 1 & 2 & -3\end{array}
\right) .
\]
It is generated by $\ult^1:=(13,0,15,10)$ and $\ult^2:=(2,15,0,5)$, and this
leads to $C(Q)=\Rpos\cdot\ult^1\oplus\Rpos\cdot\ult^2$.\\
Let $c:=[-1,-1]\in M$, then $v(c)=a^5$ and $\ku{\lambda}^c=(1,1,1,1,0,0,0)$.
Now we compute $\ku{\eta}(c)$ as described in (\ref{tautco-6}):
\[
\ku{\eta}(c)=[5/2,5/2,1,1/5,0,0,0].
\]
Since we only described $V(Q)$ as a subspace of $\R^4$, we can also denote
$\ku{\eta}(c)$ by $\ku{\eta}(c)=[5,1,1/5,0]$, which corresponds to taking
the sum on components of $Q$.
Let $\ku{\eta}:=[5,1,3/5,2/5]\in V^*$, so that it is also
contained in $V^*_{\Z}$: Since the first row of the matrix defining $V(Q)$
yields $5t_1=3t_3+2t_4$, the sum on the right hand side has $5$ as a divisor if
we only consider integral solutions. We could also regard
$\ku{\eta}$ as $[6,1,0,0]$ as element of $V_{\Z}^*$.
Let us consider $\ku{\eta}-\ku{\eta}(c)$:
\[
\ku{\eta}-\ku{\eta}(c)=[5-5,1-1,3/5-1/5,2/5-0]=[0,0,2/5,2/5].
\]
Obviously $\ku{\eta}-\ku{\eta}(c)$ is contained in $C(Q)^{\veee}$, as
it has positive entries only. Hence, 
$[c,\ku{\eta}]\in\tautv\cap\tilde{M}$.\\
We build up $\ues(c)$ as described in (\ref{tautco-6}): $\ues(c)=[5,1,1,0]$.
This yields
\[
\ku{\eta}-\ues(c)=[5-5,1-1,3/5-1,2/5-0]=[0,0,-2/5,2/5].
\]
Now we apply this to $\ult^1$:
\begin{eqnarray*}
\langle t_1,\ku{\eta}-\ues(c)\rangle & = 
& \langle (13,0,15,10) ,[0,0,-2/5,2/5]\rangle\\
& = & -2/5\cdot 15+2/5\cdot 10\\
& = & -6+4 = -2.
\end{eqnarray*}
Hence, we gain
$\ku{\eta}-\ues(c)\notin C(Q)^{\veee}$ and $[c,\ku{\eta}]\notin\Gamma$.
\end{exaX}
\begin{remark}
If one replaces the semigroup $\taut^{\veee}\cap\tilde{M}$ by its non-saturated
subgroup $\Gamma$ and $X$ by $X':=\Spec\C[\Gamma]$, respectively, 
then the diagram
of (\ref{tautco-taut}) becomes a fiber product diagram.
Moreover, 
$X'$ equals the scheme theoretical image of the map $X\to\C^w\times S$ 
induced
by the elements $[c^1,\ues(c^1)],\ldots,[c^w,\ues(c^w)]\in\Gamma$. And, in
return, $X$ is the normalization of $X'$.
\end{remark}

\section{A flat family over $\bar{\mathcal M}$}\label{flat}

We use the previous constructions to
provide a deformation of $Y$ over $\bar{\CM}$:

\begin{theorem} \label{flat-them}
Denote by $\bar{X}$ and $\bar{S}$ the scheme theoretical images of
$X$ and
$S$ in $\C^w\times\C^N$ and $\C^N$, respectively. Then
\\[1ex]
{\rm (1)} 
$X\rightarrow \bar{X}$ and $S\rightarrow\bar{S}$ are the
normalization maps,
\\[1ex]
{\rm (2)} 
$\pi:X'\rightarrow S$
induces a map $\bar{\pi}:\bar{X}\rightarrow \bar{S}$
such that $\pi$ can
be recovered from $\bar{\pi}$ via base change $S\rightarrow
\bar{S}$, and
\\[1ex]
{\rm (3)} 
restricting to $\CM\subseteq \bar{S}$
and composing with $\ell$ turns $\bar{\pi}$ into a family
\[
\bar{X} \times_{\bar{S}}\CM \stackrel{\bar{\pi}}{\longrightarrow}
\CM
\stackrel{\ell}{\longrightarrow}
\bar{\CM}\,.
\]
It is flat in $0\in \bar{\CM}\subseteq \C^{N-1}$, and the special
fiber equals
$Y$.
\vspace{-1ex}
\end{theorem}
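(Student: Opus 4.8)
The three assertions are of rather different flavor: parts (1) and (2) are statements about toric morphisms and semigroups, while part (3) is the genuine deformation-theoretic payoff. I would organize the argument so that the toric bookkeeping of (1) and (2) is disposed of first, and then build the flatness of (3) on top of it.

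\emph{Step 1 (parts (1) and (2)).} For (1), recall that $S = \Spec \C[C(Q)^\vee \cap V^*_\Z]$ is normal by construction (it is the toric variety of a saturated semigroup), while $\bar S$, by the discussion in (\ref{tautco-2}), is $\Spec$ of the non-saturated image semigroup $\im[\N^\m \to V^*_\Z]$ whose normalization is exactly $C(Q)^\vee \cap V^*_\Z$; hence $S \to \bar S$ is the normalization. The same reasoning applies to $X = \Spec\C[\taut^\vee \cap \tilde M]$ versus $\bar X = \Spec\C[\Gamma]$ using Proposition \ref{tautco-prop}(3), which says precisely that $\taut^\vee \cap \tilde M$ is the saturation of $\Gamma$, together with the Remark after (\ref{tautco-counter}) identifying $X' = \Spec\C[\Gamma]$ with the scheme-theoretic image $\bar X$ of $X$ in $\C^w \times \C^N$. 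For (2), the fiber-product statement from the same Remark gives $X' = \bar X$ fitting into a cartesian square over $S \to \bar S$; the induced map $\bar\pi : \bar X \to \bar S$ is the one coming from the semigroup inclusion $\im[\N^\m\to V^*_\Z] \hookrightarrow \Gamma$ (the first $N$ coordinates), and base change along $S \to \bar S$ recovers $\pi : X' \to S$ by the cartesian property. This is all essentially formal once Proposition \ref{tautco-prop} is in hand.

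\emph{Step 2 (the special fiber in part (3)).} The fiber over $0 \in \bar\CM$ is obtained by setting all $t_i - t_j = 0$, i.e.\ restricting along $\C \hookrightarrow S$ given by $\ku 1 \in C(Q)$, which by the fiber-product diagram of (\ref{tautco-taut}) corresponds to $Y \hookrightarrow X$. So I would verify that pulling $\bar\pi$ back along $\{0\} \hookrightarrow \bar\CM$, or equivalently along $\C \hookrightarrow \bar S$, reproduces $Y = \Spec\C[\sigv \cap M]$; concretely, the ideal $\kI$ cuts out, in the special fiber, exactly the toric relations of $\sigv\cap M$ via the generating set $E = \{[\ku 0,1],[c^1,\ezs(c^1)],\dots,[c^w,\ezs(c^w)]\}$ from (\ref{tautco-2}), because $\ues(c^\nu)$ specializes to $\ezs(c^\nu)$ and $\ku\eta(c^\nu)$ to $\eta_0(c^\nu)$.

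\emph{Step 3 (flatness, the main obstacle).} This is where the real work is. The standard strategy for flatness of a family of affine (toric) varieties is a \emph{lifting-of-relations} argument: one exhibits a free $\C[\CM]$-module of ``monomials'' with a chosen section, shows the special fiber has the analogous monomial basis, and checks that every relation in the special fiber lifts to a relation in the total space. Here the monomials are indexed by $\sigv\cap M$, lifted via the elements $[c,\ues(c)] \in \taut^\vee\cap\tilde M$ of Proposition \ref{tautco-prop}(2); the crucial input is the superadditivity inequality of Lemma \ref{tautco-lem}(ii), $\sum_\nu g_\nu\,\ues(c^\nu) \ge \ues(\sum_\nu g_\nu c^\nu)$, which controls how products of lifted monomials decompose and thus furnishes the liftings of the binomial relations of $\C[\sigv\cap M]$. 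Because flatness need only be checked at the distinguished point $0 \in \bar\CM$, one may work with the local ring and apply the local criterion (e.g.\ a Koszul/relations argument, or $\operatorname{Tor}_1$ vanishing), reducing to: the module of relations among the chosen generators of $A(\bar X)$ over $A(\bar S)$ is generated by lifts of the relations in the special fiber $A(Y)$. The subtle part — and I expect this to be the genuine obstacle — is that $\bar X$ and $\bar S$ are \emph{non-normal}, so one cannot naively argue on the normalizations $X, S$; one must track the non-saturated semigroups $\Gamma$ and $\im[\N^\m\to V^*_\Z]$ carefully, showing that the extra (non-lattice-vertex) phenomena encoded in the difference $\ues(c) - \ku\eta(c)$ do not obstruct the lifting. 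The Gorenstein case (\cite[\S 5]{gorenstein}) gives the template, but the bookkeeping for the components of edges and the ceilings $\ezs(c) = \lceil\eta_0(c)\rceil$ is exactly the new difficulty, and Lemma \ref{tautco-lem} parts (i)--(iii) are tailored to resolve it.
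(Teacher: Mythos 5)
Your overall architecture matches the paper's: dispose of (1) and (2) via semigroup saturation, identify the special fiber, then prove flatness at $0\in\bar\CM$ by lifting relations (which is indeed the paper's route, via the $\operatorname{Tor}_1$-criterion of (\ref{flat-crit})). However, there are two points where the proposal is not just terse but actually incorrect or incomplete.

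\textbf{The identification $\bar X = X'$ is wrong, and it hides the content of part (2).} You write that $\bar X = \Spec\C[\Gamma] = X'$. But the Remark after (\ref{tautco-counter}) only says that $X'$ is the scheme-theoretic image of $X$ in $\C^w\times S$, whereas the theorem defines $\bar X$ as the scheme-theoretic image in $\C^w\times\C^N$. Because $S\to\C^N$ itself factors through the non-normal $\bar S$, the coordinate rings sit strictly as $A(\bar X)\subseteq A(X')\subseteq A(X)$, and in general $\bar X\neq X'$. Part (2) is precisely the nontrivial assertion that the middle term $X'$ is recovered from $\bar X$ by base change along $S\to\bar S$; this is not formal. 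The paper proves it by showing that $\ker\bigl(A(S)[\ku Z]\to A(X')\bigr)$ is generated by explicit binomials $\ku t^{\ku\eta}\ku Z^a-\ku t^{\ku\mu}\ku Z^b$ and then checking that each such binomial already vanishes in $A(\bar X)\otimes_{A(\bar S)}A(S)$ (via the factorization through $\ku Z^{[c,\ues(c)]}$). Your Step 1 omits this computation entirely, and the conflation $\bar X = X'$ makes it look as though nothing is left to prove.

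\textbf{Step 3 is a plan, not a proof.} You correctly identify the lifting-of-relations strategy and the role of Lemma \ref{tautco-lem}(ii), but the actual work is the classification of relations into three types (as in (\ref{flat-three-Rel})): (i) transitivity relations among the $f_{(a,b,\alpha,\beta)}$, (ii) multiplication by $t$, and (iii) multiplication by a $\ku z$-monomial. Types (i) and (ii) lift verbatim; the substance is type (iii), where one must show that $\ku Z^r F_{(a,b,\alpha,\beta)} - F_{(a+r,b+r,\alpha,\beta)}$ factors as (an element of the ideal of $0\in\bar\CM$) times (another $F$-equation), using the inequalities $\sum_\nu a_\nu\ues(c^\nu)\ge\ues(c)$ and $\ues(c)+\sum_\nu r_\nu\ues(c^\nu)-\ues(\tilde c)\ge 0$. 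You gesture at the superadditivity inequality but do not exhibit the factorization, which is where the ``ceilings'' $\ezs(c)=\lceil\eta_0(c)\rceil$ and the non-lattice-vertex corrections actually enter. As written, your Step 3 would not convince a reader that the family is flat; it only names the criterion.
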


The proof of this theorem will fill Section~\ref{flat}.

\subsection{}\label{flat-pf}
The ring of regular functions $A(\bar{S})$ is given as the image of
$\C[t_1,\dots,t_N] \rightarrow A(S)$. Since $\Z^N\surj V_{\Z}^*$ is
surjective, the rings
$A(\bar{S})\subseteq A(S) \subseteq \C[V^*_{\Z}]$ have the same field of
fractions.
\\[1ex]
On the other hand, while $t$-monomials with negative exponents might be
involved in $A(S)$, the surjectivity of $\R^N_{\geq 0} \surj C(Q)^{\veee}$
tells us that sufficiently high powers of those monomials always come from
$A(\bar{S})$. In particular, $A(S)$ is normal over $A(\bar{S})$.
\\[1ex]
$A(\bar{X})$ is given as the image
$A(\bar{X})=\im \, (\C[Z_1,\dots,Z_w,t_1,\dots,t_N]\rightarrow 
A(X'))$ with $Z_i\mapsto$ 
[monomial associated to $[c^i,\ezs(c^i)]$].
Since $A(X')$ is generated by these monomials over 
its subring $A(S)$,
cf.\ Proposition \ref{tautco-prop}(3)), 
the same arguments as for $S$ and $\bar{S}$ apply. 
Hence, Part (1) of the previous theorem is proved.

\subsection{}\label{flat-3} 
Denoting by $z_1,\dots,z_w,\,t$ the variables mapping
to the $A(Y)$-monomials with exponents 
$[c^1,\ezs(c^1)]$, $\dots$, $[c^w,\ezs(c^w)]$, $[0,1] \in
\sigma^{\veee} \cap M$,
respectively, we obtain the
following equations for $Y \subseteq \C^{w+1}$: 
\begin{eqnarray*}
f_{(a,b,\alpha,\beta)}(\ku{z},t) &:=& t^\alpha \,\prod_{t=1}^w z_{\nu}^{a_{\nu}} - 
t^\beta \, \prod_{t=1}^w z_{\nu}^{b_{\nu}} \\
&& \hspace{-1cm} \mbox{with }
\begin{array}[t]{l}
a,b\in \N^w: \; \sum_{\nu} a_{\nu}\, c^{\nu} = \sum_{\nu} b_{\nu}\, c^{\nu} \quad \mbox{ and}\\
\alpha, \beta \in \N: \; \sum_{\nu} a_{\nu}\,\ezs(c^{\nu}) + \alpha = 
\sum_{\nu} b_{\nu}\,\ezs(c^{\nu}) + \beta\, .
\vspace{2ex}
\end{array}
\end{eqnarray*}
Defining $c:=\sum_{\nu} a_{\nu}\, c^{\nu} = \sum_{\nu} b_{\nu}\, c^{\nu}$, we 
can lift them to the following elements of
$A(\bar{S})[Z_1,\dots,Z_w]$
(described by using liftings to
$\,\C[Z_1,\dots,Z_w,\,t_1,\dots,t_N]$):
\[
F_{(a,b,\alpha,\beta)}(\ku{Z},\ku{t}) := f_{(a,b,\alpha,\beta)}(\ku{Z},t_1) -
\ku{Z}^{[c,\ues(c)]} \cdot
\left( \ku{t}^{\alpha e_1 +\sum_{\nu} a_{\nu} \ues(c^{\nu})} -
\ku{t}^{\beta e_1 +\sum_{\nu} b_{\nu} \ues(c^{\nu})} \right) \cdot
\ku{t}^{-\ues(c)}\, .
\vspace{2ex}
\]

\begin{remark}
(1) The symbol $\ku{Z}^{[c,\ues(c)]}$ means $\prod_{v=1}^w Z_{\nu}^{p_{\nu}}$
with natural
numbers
$p_{\nu}\in \N$ such that
$[c,\ues(c)] = \sum_{\nu} p_{\nu}\, [c^{\nu}, \ues(c^{\nu})]$, or equivalently
$[c,\ezs(c)] = \sum_{\nu} p_{\nu}\, [c^{\nu}, \ezs(c^{\nu})]$.
This condition does not determine the coefficients $p_{\nu}$ uniquely
-- choose one of the possibilities.
Choosing other coefficients $q_{\nu}$ with the same property yields
\[
Z_1^{p_1}\cdot\dots\cdot Z_w^{p_w} - Z_1^{q_1}\cdot\dots\cdot
Z_w^{q_w} =
F_{(p,q,0,0)}(\ku{Z},\ku{t}) = f_{(p,q,0,0)}(\ku{Z},t)\, .
\]
(2) By part (iii) of Lemma \ref{tautco-lem}, we have $\sum_{\nu}
a_{\nu}\ues(c^{\nu}),\,
\sum_{\nu} b_{\nu}\ues(c^{\nu}) \ge \ues(c)$ in the sense of (\ref{tautco-2}).
In particular, representatives of the $\ues$'s can be chosen such
that
all $t$-exponents occurring in monomials of $F$ are non-negative integers,
i.e.\
$F$ indeed defines an element of $A(\bar{S})[Z_1,\dots,Z_w]$.
\end{remark}

\begin{lemma}
The polynomials $F_{(a,b,\alpha,\beta)}$ generate
$\mbox{Ker}\,(A(\bar{S})[\ku{Z}]
\rightarrow A(X'))$, i.e.\ they can be used as equations for 
$\bar{X} \subseteq \C^w 
\times \bar{S}$.
\vspace{-2ex}
\end{lemma}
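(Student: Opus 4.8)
We want to show that the explicitly constructed polynomials $F_{(a,b,\alpha,\beta)}$ generate the kernel $K:=\operatorname{Ker}\bigl(A(\bar S)[\underline Z]\to A(X')\bigr)$. The strategy is to exploit that $A(X')=\C[\Gamma]$ is generated over its subring $A(S)=\C[C(Q)^\veee\cap V_\Z^*]$ by the finitely many monomials $\underline Z^{e_\nu}\mapsto Z_\nu$ corresponding to $[c^\nu,\underline\eta^*(c^\nu)]$, by Proposition \ref{tautco-prop}(3). One inclusion is immediate: each $F_{(a,b,\alpha,\beta)}$ maps to $f_{(a,b,\alpha,\beta)}(\underline z,t)$ in $A(Y)$ under the special fibre, but more to the point, by construction each $F$ is the difference of two monomials in $\C[Z_1,\dots,Z_w,t_1,\dots,t_N]$ whose images in $A(X')\subseteq\C[\tilde M]$ agree — the correction term $\underline Z^{[c,\underline\eta^*(c)]}\cdot\underline t^{\,\cdots}$ exactly accounts for the discrepancy in the $V^*$-coordinate, while the $A(Y)$-part $f_{(a,b,\alpha,\beta)}(\underline Z,t_1)$ handles the $\lat^*$- and height-coordinates. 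So $F_{(a,b,\alpha,\beta)}\in K$.

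For the reverse inclusion I would use a standard Gröbner/reduction argument for toric-type ideals. Every element of $A(\bar S)[\underline Z]$ can be written, modulo the ideal generated by the $F$'s, as an $A(\bar S)$-linear combination of monomials $\underline Z^a$ with $a\in\N^w$ ranging over a set of coset representatives: indeed, whenever $\sum a_\nu c^\nu=\sum b_\nu c^\nu$ and the heights also match, $F_{(a,b,\alpha,\beta)}$ lets us replace $\underline t^\alpha\underline Z^a$ by $\underline t^\beta\underline Z^b$ up to terms in $A(\bar S)$ (via the correction term, which lies in $A(\bar S)[\underline Z]$ by Remark (2) following \ref{flat-3}). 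Choosing a term order and using that $\Gamma$ is a finitely generated semigroup, one reduces any kernel element to a canonical $A(\bar S)$-combination of monomials whose images in $\C[\Gamma]$ are $A(S)$-linearly independent — hence such a combination lies in $K$ only if it is zero. The point where the semigroup structure enters is that two monomials $\underline Z^a$, $\underline Z^b$ have the same image in $A(X')$ modulo $A(S)$ precisely when $\sum a_\nu[c^\nu,\underline\eta^*(c^\nu)]$ and $\sum b_\nu[c^\nu,\underline\eta^*(c^\nu)]$ differ by an element of $C(Q)^\veee\cap V_\Z^*$ together with matching $(\lat^*$-height$)$-data; part (ii) of Lemma \ref{tautco-lem} (the inequality $\sum g_\nu\underline\eta^*(c^\nu)\ge\underline\eta^*(\sum g_\nu c^\nu)$) is exactly what guarantees the correction term has nonnegative, i.e.\ genuine, $A(\bar S)$-exponents, so the reduction stays inside $A(\bar S)[\underline Z]$.

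More concretely, the cleanest route is: take $P\in K$, write $P=\sum_a h_a(\underline t)\underline Z^a$ with $h_a\in A(\bar S)$; for each pair of exponents $a,a'$ in the support with $\sum a_\nu c^\nu=\sum a'_\nu c^\nu$ and equal heights, subtract a suitable $A(\bar S)$-multiple of $F_{(a,a',\alpha,\alpha')}$ to merge them; iterate until the remaining monomials $\underline Z^a$ have pairwise distinct images in $\C[\Gamma]/A(S)$. Applying the ring homomorphism $A(\bar S)[\underline Z]\to A(X')$ to the result and using that distinct such monomials give $A(S)$-linearly independent (in fact $\C$-linearly independent after multiplying out) elements of $\C[\Gamma]$, conclude that each remaining coefficient vanishes, so $P$ lay in the ideal $(F_{(a,b,\alpha,\beta)})$ all along. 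The analogue in the Gorenstein setting is \cite[proof around Prop.\ (4.6) and the following section]{gorenstein}, and the argument transfers verbatim once the bookkeeping of $\underline\eta^*$ versus $\underline\eta$ is in place.

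**The main obstacle** is the termination/finiteness of the reduction: one must be sure that successively merging monomials via the $F$'s does not cycle, and that finitely many of the $F_{(a,b,\alpha,\beta)}$ actually suffice (the family is indexed by infinitely many $(a,b,\alpha,\beta)$). This is handled by Dickson's lemma / Noetherianity — the exponent vectors $(a,\alpha)$ lie in $\N^{w+1}$, the semigroup of relations is finitely generated, and a monomial term order makes the rewriting strictly decreasing — but it is the step that needs care rather than the mere verification $F\in K$. Everything else is the routine "toric ideal = lattice ideal saturated appropriately" bookkeeping, with Lemma \ref{tautco-lem}(ii) ensuring we never leave $A(\bar S)[\underline Z]$.
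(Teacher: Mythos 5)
Your forward inclusion matches the paper. For the converse you propose an iterative Gr\"obner-style reduction and flag termination as the principal worry; the paper avoids this entirely. It first observes that, since $A(\bar{S})[\ku{Z}] \to A(X')$ sends variables to monomials of the semigroup ring $\C[\Gamma]$, the kernel is (obviously) generated by the binomials $\ku{t}^{\ku{\eta}}\ku{Z}^a - \ku{t}^{\ku{\mu}}\ku{Z}^b$ whose two terms have the same exponent in $\Gamma$; it then exhibits for each such binomial the closed-form membership certificate $\ku{t}^{\ku{\eta}}\ku{Z}^a - \ku{t}^{\ku{\mu}}\ku{Z}^b = \ku{t}^{\ku{\eta}}\,F_{(a,p,0,\alpha)} - \ku{t}^{\ku{\mu}}\,F_{(b,p,0,\beta)}$, where $p$ is the fixed representative with $\sum_\nu p_\nu[c^\nu,\ues(c^\nu)]=[c,\ues(c)]$ and $\alpha=\sum_\nu a_\nu\ezs(c^\nu)-\ezs(c)$, $\beta=\sum_\nu b_\nu\ezs(c^\nu)-\ezs(c)$ (non-negative by Lemma~\ref{tautco-lem}). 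The difference from your plan is the choice of hub: each $\ku{Z}^a$ is reduced in a \emph{single} step to $\ku{Z}^p\,\ult^{\sum_\nu a_\nu\ues(c^\nu)-\ues(c)}$ via $F_{(a,p,0,\alpha)}$, always to the same base point $p=p^c$ for its $c$-class, and the binomial condition makes the two $\ku{Z}^p$-terms cancel outright. There is no iteration, no term order, and no Dickson's lemma; the termination question you single out as ``the main obstacle'' simply does not arise. Your version --- merging adjacent pairs via $F_{(a,a',\alpha,\alpha')}$ --- could be made to work, but it requires exactly the extra bookkeeping you describe and must also justify that the kernel really is controlled by such binomials; fixing the hub $p^c$ per $c$-class gives a direct two-term certificate and makes both points transparent.
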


\begin{proof}
Mapping $F$ into $A(X')= \oplus_{[c,\eta]} \,\C \,x^{[c,\eta]}\;$ 
($[c,\eta]$ runs through
all elements of $\Gamma \cap (\lat^*\times V_{\Z}^*)$;
$Z_{\nu} \mapsto x^{[c^{\nu}\!, \ues(c^{\nu})]},\, t_\di  \mapsto x^{[0,e_\di]}$)
yields
\begin{eqnarray*}
F_{(a,b,\alpha,\beta)} &=&
\begin{array}[t]{r}
\left( t_1^\alpha \, \prod_{\nu} Z_{\nu}^{a_{\nu}} -
\ku{Z}^{[c,\ues(c)]} \, \ku{t}^{\alpha e_1 + \sum_{\nu} a_{\nu}\ues(c^{\nu}) -
\ues(c)} \right) - \qquad\qquad\qquad\\
- 
\left( 
t_1^\beta \, \prod_{\nu} Z_{\nu}^{b_{\nu}} -
\ku{Z}^{[c,\ues(c)]} \, \ku{t}^{\beta e_1 + \sum_{\nu} b_{\nu}\ues(c^{\nu}) - \ues(c)}
\right)
\end{array}\\
& \mapsto &
\begin{array}[t]{r}
\left( x^{\alpha [0,e_1] + \sum_{\nu} a_{\nu} [c^{\nu}\!,\ues(c^{\nu})]} -
x^{[c,\ues(c)] + \alpha [0,e_1] + \sum_{\nu} a_{\nu} [0,\ues(c^{\nu})] - [0,\ues(c)]}
\right) - \quad\\
- \left( x^{\beta [0,e_1] + \sum_{\nu} b_{\nu} [c^{\nu}\!,\ues(c^{\nu})]} -
x^{[c,\ues(c)] + \beta [0,e_1] + \sum_{\nu} b_{\nu} [0,\ues(c^{\nu})] - [0,\ues(c)]}
\right)
\end{array}\\
& = & \; 0 \; - \; 0\, = \,0 \,.
\end{eqnarray*}
On the other hand, $\mbox{Ker}\,(A(\bar{S})[\ku{Z}] \rightarrow A(X'))$ is
obviously generated by the binomials
\[
\begin{array}[t]{r}
\ku{t}^{\ku{\eta}}\, Z_1^{a_1}\cdot\dots\cdot Z_w^{a_w} -
\ku{t}^{\ku{\mu}}\, Z_1^{b_1}\cdot\dots\cdot Z_w^{b_w} \quad 
\mbox{ such that}\hspace{4cm}
\vspace{1ex}\\
\begin{array}[t]{l}
\sum_{\nu} a_{\nu} [c^{\nu},\ues(c^{\nu})] + [0,\ku{\eta}] =
\sum_{\nu} b_{\nu} [c^{\nu},\ues(c^{\nu})] + [0,\ku{\mu}]  \, , \vspace{1ex}\\
\mbox{i.e.\ }
\begin{array}[t]{l}
c:= \sum_{\nu} a_{\nu} \, c^{\nu} = \sum_{\nu} b_{\nu}\, c^{\nu}\mbox{ and}\\
\sum_{\nu} a_{\nu}\, \ues(c^{\nu}) + \ku{\eta} = 
\sum_{\nu} b_{\nu}\, \ues(c^{\nu}) + \ku{\mu}\, .
\end{array}
\end{array}
\end{array}
\]
However,
\begin{eqnarray*}
\ku{t}^{\ku{\eta}}\,\ku{Z}^{a} -
\ku{t}^{\ku{\mu}}\,\ku{Z}^{b} &=&
\begin{array}[t]{r}
\ku{t}^{\ku{\eta}}\cdot
\left( \prod_{\nu} Z_{\nu}^{a_{\nu}} - \ku{Z}^{[c ,\ues(c)]} \, 
\ku{t}^{\sum_{\nu} a_{\nu} \ues(c^{\nu}) -
\ues(c)} \right) - \qquad\\
- \ku{t}^{\ku{\mu}}\cdot
\left( \prod_{\nu} Z_{\nu}^{b_{\nu}} - 
\ku{Z}^{[c ,\ues(c)]} \, \ku{t}^{\sum_{\nu} b_{\nu} \ues(c^{\nu}) -
\ues(c)} \right) 
\end{array}\\
& = &
\ku{t}^{\ku{\eta}}\cdot F_{(a,p,0,\alpha)} -
\ku{t}^{\ku{\mu}}\cdot F_{(b,p,0,\beta)}
\end{eqnarray*}
with $p\in \N^w$ such that $\sum_{\nu} p_{\nu} [c^{\nu}, \ues(c^{\nu})] = [c, \ues(c)]$,
$\alpha = \sum_{\nu} a_{\nu} \ezs(c^{\nu}) - \ezs(c)$, and
$\beta = \sum_{\nu} b_{\nu} \ezs(c^{\nu}) - \ezs(c)$.
\end{proof}

Using exponents $\ku{\eta}, \ku{\mu} \in \Z^N$ (instead of $\N^N$), the
binomials
$\ku{t}^{\ku{\eta}}\,\ku{Z}^{a} -
\ku{t}^{\ku{\mu}}\,\ku{Z}^{b}$ generate the kernel of the map
\[
A(S)[\ku{Z}] = A(\bar{S})[\ku{Z}] \otimes_{A(\bar{S})} A(S) \surj
A(\bar{X})  \otimes_{A(\bar{S})} A(S) \surj A(X')\, .
\]
Since $\ku{Z}^a \otimes \ku{t}^{\ku{\eta}} -
\ku{Z}^b \otimes \ku{t}^{\ku{\mu}} =
\ku{Z}^{[c,\ues(c)]} \otimes \left(
\ku{t}^{\sum_{\nu} a_{\nu} \ues(c^{\nu}) - \ues(c) + \ku{\eta}} -
\ku{t}^{\sum_{\nu} b_{\nu} \ues(c^{\nu}) - \ues(c) + \ku{\mu}} \right) = 0$
in $A(\bar{X})  \otimes_{A(\bar{S})} A(S)$, this implies that the
surjection $A(\bar{X})  \otimes_{A(\bar{S})} A(S) \surj A(X')$ is
injective, too. In particular, part (2) of our theorem is proved.

We are going to use the following well known criterion of flatness:

\begin{theorem} \label{flat-crit}
Let $\tilde{\pi}:\tilde{X}\hookrightarrow 
\C^{w+1}\times\bar{\CM}\surj \bar{\CM}$ 
be a map with special fiber $Y= \tilde{\pi}^{-1}(0)$; 
in particular, $Y\subseteq \C^{w+1}$
is defined by the restrictions to $0\in\bar{\CM}$ of the equations defining
$\tilde{X}\subseteq \C^{w+1}\times\bar{\CM}$. Then $\tilde{\pi}$ is flat, if
and only if each linear relation between the (restricted) equations for $Y$
lifts to some linear relation between the original equations for $\tilde{X}$.
\vspace{-2ex}
\end{theorem}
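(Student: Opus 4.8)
The plan is to reduce the statement to commutative algebra and then invoke the local criterion of flatness. Write $A:=\CO_{\bar{\CM},0}$ for the local ring of the germ, $\Fm\subseteq A$ for its maximal ideal, and $B:=A[x_1,\dots,x_{w+1}]$, so that $\tilde X\subseteq\C^{w+1}\times\bar{\CM}$ is cut out by an ideal $I=(F_1,\dots,F_r)\subseteq B$. Reducing modulo $\Fm$ gives $\bar B:=B/\Fm B=\C[x_1,\dots,x_{w+1}]$, the image ideal $\bar I$ with generators $f_i:=F_i\bmod\Fm B$, and $A(Y)=\bar B/\bar I$. Since $B$ is free, hence flat, over $A$, the assertion ``$\tilde\pi$ is flat at $0$'' means ``$B/I$ is flat over $A$''. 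Now $\operatorname{Tor}_1^A(B/I,A/\Fm)$ is a $B/I$-module annihilated by $\Fm$, so it vanishes iff all its localizations at primes $\mathfrak q$ lying over $0$ vanish; at each such $\mathfrak q$ the ring $(B/I)_{\mathfrak q}$ is Noetherian local, essentially of finite type over $A$, with $\Fm\subseteq\mathfrak q$, hence ideally separated over $A$, so the local criterion of flatness applies and identifies flatness of $\tilde\pi$ at $0$ with the single condition $\operatorname{Tor}_1^A(B/I,A/\Fm)=0$. From $0\to I\to B\to B/I\to 0$ and the flatness of $B$, this $\operatorname{Tor}$ group equals $(I\cap\Fm B)/\Fm I$, so flatness at $0$ is equivalent to $I\cap\Fm B=\Fm I$.

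Next I would set up the dictionary with relations. Consider the presentation $B^r\xrightarrow{\;\Phi\;}B$, $e_i\mapsto F_i$, with $\operatorname{Rel}:=\ker\Phi\subseteq B^r$ the module of relations among the $F_i$. Reduction mod $\Fm$ turns $\Phi$ into $\bar\Phi:\bar B^r\to\bar B$, $e_i\mapsto f_i$, and $\overline{\operatorname{Rel}}:=\ker\bar\Phi$ is exactly the module of linear relations among the equations of $Y$. The hypothesis of the theorem says precisely that the natural reduction map $\operatorname{Rel}/\Fm\operatorname{Rel}\to\overline{\operatorname{Rel}}$ is surjective. For the implication ``lifting $\Rightarrow$ flat'' take $g\in I\cap\Fm B$ and write $g=\sum_i g_iF_i$; reducing mod $\Fm$ gives $\sum_i\bar g_i f_i=0$, so $(\bar g_1,\dots,\bar g_r)\in\overline{\operatorname{Rel}}$ and, by hypothesis, lifts to some $(h_1,\dots,h_r)\in\operatorname{Rel}$ with $h_i\equiv g_i\pmod{\Fm B}$. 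Then $g=\sum_i g_iF_i=\sum_i(g_i-h_i)F_i\in\Fm B\cdot I=\Fm I$, which proves $I\cap\Fm B\subseteq\Fm I$, the reverse inclusion being trivial; hence $\operatorname{Tor}_1^A(B/I,A/\Fm)=0$ and $\tilde\pi$ is flat.

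For the converse ``flat $\Rightarrow$ lifting'', assume $B/I$ flat over $A$. Then the long exact $\operatorname{Tor}$-sequences of $0\to I\to B\to B/I\to 0$ and $0\to\operatorname{Rel}\to B^r\to I\to 0$, together with the flatness of $B$, give $\operatorname{Tor}_1^A(B/I,A/\Fm)=0$ and $\operatorname{Tor}_1^A(I,A/\Fm)=\operatorname{Tor}_2^A(B/I,A/\Fm)=0$. Tensoring the second sequence with $A/\Fm$ then produces an exact sequence $0\to\operatorname{Rel}/\Fm\operatorname{Rel}\to\bar B^r\to I/\Fm I\to 0$; moreover $I/\Fm I\hookrightarrow\bar B$ (its kernel being $\operatorname{Tor}_1^A(B/I,A/\Fm)=0$) with image $\bar I$, and the composite $\bar B^r\to I/\Fm I\hookrightarrow\bar B$ is $\bar\Phi$. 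Hence $\operatorname{Rel}/\Fm\operatorname{Rel}\cong\ker\bar\Phi=\overline{\operatorname{Rel}}$, so in particular the reduction map is surjective: every relation among the equations of $Y$ is the reduction of a relation among the equations of $\tilde X$.

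The main obstacle I anticipate is not the two implications, which are essentially formal once the dictionary above is in place, but the reduction to the single group $\operatorname{Tor}_1^A(B/I,A/\Fm)$: one must know that flatness of the (non-finite) $A$-algebra $B/I$ over the Noetherian local ring $A$ can be tested on the closed fibre, which is the content of the local criterion of flatness and relies on $B/I$ being ideally separated over $A$. If one only needs $A$ Artinian — as when testing infinitesimal deformations — this point disappears and the argument becomes entirely elementary; over the germ $\bar{\CM}$ one additionally uses that flatness may be checked after localizing at the primes over $0$.
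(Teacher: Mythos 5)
Your proposal is correct and follows essentially the same route as the paper. The paper also reduces flatness at $0$ to the vanishing of $\operatorname{Tor}_1^{\CO_{\bar\CM,0}}\big((\tilde\pi_*\CO_{\tilde X})_0,\C\big)$ via Matsumura's local criterion of flatness, then observes that a partial $\CO_{\bar\CM,0}[Z_0,\dots,Z_w]$-free resolution of the structure sheaf built from the equations and their linear relations remains exact after $\otimes\,\C$ iff this $\operatorname{Tor}$ vanishes iff relations among the $Y$-equations lift. Your write-up expands exactly these steps: you translate $\operatorname{Tor}_1$-vanishing into $I\cap\Fm B=\Fm I$, give both implications directly, and you also make explicit two points the paper leaves implicit — that $B/I$ is not finite over $A$, so one must invoke ideal separation to apply the local criterion, and that flatness along the whole fibre is tested by localizing at primes over $0$. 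That added care is a genuine improvement in rigor, but the underlying argument is the same.
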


\begin{proof}
According to
\cite[(20.C), Theorem 49]{Ma}, 
flatness of $\tilde{\pi}$ in $0\in\bar{\CM}$ is
equivalent to the vanishing of
$\mbox{Tor}_1^{\CO_{\bar{\CM},0}}((\tilde{\pi}_{*}\CO_{\tilde{X}})_0,
\C)$ where $\C$ becomes an $\CO_{\bar{\CM},0}$-module via evaluating in
$0\in\bar{\CM}$.
\\
Using the embedding $\tilde{X}\hookrightarrow \C^{w+1}\times\bar{\CM}$
(together with the defining equations and linear relations between them) 
we obtain an
$\CO_{\bar{\CM},0}[Z_0,\dots,Z_w]$-free 
(hence $\CO_{\bar{\CM},0}$-free) resolution
of $(\tilde{\pi}_{*}\CO_{\tilde{X}})_0$ up to the second term. Now,
the condition that relations between $Y$-equations lift to those between
$\tilde{X}$-equations is equivalent to the fact that our (partial) resolution
remains exact under $\otimes_{\CO_{\bar{\CM},0}}\,\C$.
\vspace{-2ex}
\end{proof}

For our special situation take $\tilde{X}:= \bar{X} \times_{\bar{S}}\CM$ (and
$\bar{\CM}:= \bar{\CM},\, Y:= Y$); in (\ref{flat-3}) 
we have seen how
the equations defining $Y\hookrightarrow \C^w\times\C$ can be lifted to those
defining $\bar{X}\hookrightarrow \C^w\times\bar{S}$, hence
$\bar{X} \times_{\bar{S}}\CM \hookrightarrow \C^w\times \CM
\stackrel{\sim}{\rightarrow} \C^w\times\C\times\bar{\CM}$.\\
In particular, to show (3) of Theorem \ref{flat-them}, we
only have to take the linear relations
between the $f_{(a,b,\alpha,\beta)}$'s and lift them to relations between
the $F_{(a,b,\alpha,\beta)}$'s.

\subsection{}\label{flat-three-Rel} 
According to the special shape of our generator set $E$,
there are three types of relations between the 
$f_{(a,b,\alpha,\beta)}$'s:
\vspace{-1.7ex}
\begin{itemize}
\item[(i)]
$f_{(a,r,\alpha,\gamma)} + f_{(r,b,\gamma,\beta)} = f_{(a,b,\alpha,\beta)}$\\
with
$\begin{array}[t]{l}
\sum_{\nu}a_{\nu}c^{\nu} = \sum_{\nu} r_{\nu} c^{\nu} = \sum b_{\nu} c^{\nu}\; \mbox{ and}\\
\sum_{\nu} a_{\nu}\ezs(c^{\nu}) +\alpha = \sum_{\nu}r_{\nu}\ezs(c^{\nu}) +\gamma =
\sum_{\nu}b_{\nu}\ezs(c^{\nu}) + \beta\, .
\end{array}$\\
For this relation, the same equation between the $F$'s is true.
\vspace{0.8ex}
\item[(ii)]
$t\cdot f_{(a,b,\alpha,\beta)} = f_{(a,b,\alpha+1,\beta+1)}\;$ lifts to
$\;t_1\cdot F_{(a,b,\alpha,\beta)} = F_{(a,b,\alpha+1,\beta+1)}$.
\vspace{0.8ex}
\item[(iii)]
$\ku{z}^r\cdot f_{(a,b,\alpha,\beta)} = f_{(a+r,b+r,\alpha,\beta)}$.
\end{itemize}

With $c:= \sum_{\nu}a_{\nu}c^{\nu} = \sum_{\nu}b_{\nu}c^{\nu},\; \tilde{c}:= 
c+ \sum_{\nu}r_{\nu}c^{\nu}$ we obtain for (iii)
\[
\begin{array}{l}
\ku{Z}^r\cdot F_{(a,b,\alpha,\beta)} - F_{(a+r,b+r,\alpha,\beta)} =
\vspace{1ex}\\
\qquad\begin{array}[t]{r}
=\ku{Z}^{[\tilde{c},\ues(\tilde{c})]}
\cdot
\left( \ku{t}^{ \alpha e_1 + \sum_{\nu}a_{\nu}\ues(c^{\nu}) +
\sum_{\nu}r_{\nu}\ues(c^{\nu})} - \ku{t}^{ \beta e_1 + \sum_{\nu}b_{\nu}\ues(c^{\nu}) +
\sum_{\nu}r_{\nu}\ues(c^{\nu})} \right)
\cdot \ku{t}^{-\ues(\tilde{c})} - \\
- \ku{Z}^{[c,\ues(c)]}\, \ku{Z}^r\cdot
\left( \ku{t}^{ \alpha e_1 + \sum_{\nu}a_{\nu}\ues(c^{\nu})} -
\ku{t}^{ \beta e_1 + \sum_{\nu}b_{\nu}\ues(c^{\nu})} \right)\cdot
\ku{t}^{-\ues(c)}
\end{array}
\vspace{1ex}\\
\qquad \begin{array}[t]{r} =
\left( \ku{t}^{ \alpha e_1 + \sum_{\nu}a_{\nu}\ues(c^{\nu})-\ues(c)} -
\ku{t}^{ \beta e_1 + \sum_{\nu}b_{\nu}\ues(c^{\nu})-\ues(c)} \right)\cdot
\hspace{4cm}\\
\left( \ku{t}^{\ues(c)+\sum_{\nu}r_{\nu}\ues(c^{\nu}) - \ues(\tilde{c})}
\ku{Z}^{[\tilde{c},\ues(\tilde{c})]} - \ku{Z}^{[c,\ues(c)]}
\ku{Z}^r \right)\,.
\end{array}
\end{array}
\]
Now, the inequalities
\[
\sum_{\nu}a_{\nu}\ues(c^{\nu}),\,\sum_{\nu}b_{\nu}\ues(c^{\nu}) \ge \ues(c)
\;\mbox{ and }\;
\ues(c)+\sum_{\nu}r_{\nu}\ues(c^{\nu}) - \ues(\tilde{c}) \ge 0
\]
imply that
the first factor is contained in the ideal defining
$0\in \bar{\CM}$ and that
the second factor is an equation of $\bar{X}\subseteq
\C^w\times\bar{S}$ (called $F_{(q,p+r,\xi,0)}$ in
(\ref{obst-threeRel})).
In particular, we have found a lift for the third relation, too.
The proof of Theorem \ref{flat-them} is complete.

\section{The Kodaira-Spencer map}\label{kodaira}

\subsection{}\label{kodSpencE}
To each vertex $v^j\in Q$ we associate the subset
\[
 E_j:=E_{v^j}:=\{ [c,\ezs(c)]\in E|\ 
\langle v^j,c\rangle +\ezs(c)<1\}.
\]
Additionally define the sets
\[
E_0:=\bigcup_j E_j,\quad E_{ij}:=E_i\cap E_j.
\]
Let $r=[c,\ezs(c)]\in E$ be given. Then we have
\[
\langle v^j,c\rangle +\ezs(c)=
\langle (v^j,1)\ ,\ [c,\ezs(c)]\rangle
= \frac{\langle a^j,r\rangle}{\langle a^j,R\rangle}
\]
and we obtain the following alternative description of $E_j$:
\[
E_j:=\{r\in E|\ \langle a^j,r\rangle\ <\
\langle a^j, R\rangle\}.
\]
In other words: The primitive generators $a^j$ of $\sigma$ define the
facets of the dual cone $\sigv$, i.e. they define hyperplanes such that
$\sigv$ is the intersection of the halfspaces above these hyperplanes:
\[
\sigv=\bigcup_j\{m\in M_{\R}|\
\langle a^j, m\rangle \ge 0\}\subseteq M_{\R}.
\]
Now $E^j$ contains those elements of $E$ that are closer to the facet of
$\sigma$
defined by $a^j$ than $R$.\\
We also get the following alternative
description of $\ues(c)$ compared with its definition in 
(\ref{tautco-6}):

\begin{lemma}
 Assume that $[c,\ezs(c)]$ is contained in $E_j$. Then 
\[
\ues(c)=\langle v^j_\bullet,-c\rangle +
(\langle v^j,\,c\rangle+\ezs(c) )\cdot e[v^j]
\]
where
$v^j_\bullet$ denotes the map assigning $\ult\in V(Q)$ the vertex
$v^j_{\ult}$ of the (generalized) Minkowski summand $Q_{\ult}$.
\end{lemma}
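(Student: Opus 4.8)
The goal is to rewrite the element $\ues(c)\in V(Q)^*$ in a way that refers to the chosen vertex $v^j$ rather than to $v(c)$, under the hypothesis $[c,\ezs(c)]\in E_j$, i.e.\ $\langle v^j,c\rangle+\ezs(c)<1$. The key observation is that this hypothesis, together with $\langle Q,c\rangle+\eta_0(c)\ge 0$ from (\ref{tautco-1}), pins down the relationship between $v^j$ and the true face-vertex $v(c)$. The strategy is as follows. First I would compute $\langle v^j,c\rangle+\eta_0(c)$: since $\eta_0(c)\le\ezs(c)<1-\langle v^j,c\rangle$, and also $\langle v^j,c\rangle+\eta_0(c)\ge 0$, the value $\langle v^j,c\rangle+\eta_0(c)$ lies in the half-open interval $[0,1)$. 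Combined with the fact that $\langle v(c),c\rangle+\eta_0(c)=0$ is the minimum of $\langle\bullet,c\rangle+\eta_0(c)$ over $Q$, this shows that $v^j$ and $v(c)$ lie in a controlled position relative to each other.

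\textbf{The main computation.} Recall from (\ref{tautco-6}) that $\ues(c)=\ku{\eta}(c)+(\ezs(c)-\eta_0(c))\cdot e[v(c)]$, where $\ku{\eta}(c)$ is built from a path $\ku{\lambda^c}$ running from $0$ to $v(c)$ through the $1$-skeleton of $Q$. The plan is to split this path at $v^j$: choose a path from $0$ to $v^j$ (giving the vector $v^j_\bullet$, since by Lemma \ref{def-lem} the assignment $\ult\mapsto v^j_{\ult}$ is exactly $\sum_i (\lambda^{(j)}_i t_i)\,d^i$ where $\ku{\lambda^{(j)}}$ records the $0\to v^j$ path), followed by a path from $v^j$ to $v(c)$. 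Evaluated against $c$, the $0\to v^j$ segment contributes $\langle v^j_\bullet,-c\rangle$ to $\ku{\eta}(c)$, and the $v^j\to v(c)$ segment contributes a term whose total coordinate-sum is $\langle v^j,c\rangle-\langle v(c),c\rangle = \langle v^j,c\rangle+\eta_0(c)$. So along the component of edges joining $v^j$ to $v(c)$, the net $\ku{\eta}(c)$-contribution sums to $\langle v^j,c\rangle+\eta_0(c)$; adding the $(\ezs(c)-\eta_0(c))\,e[v(c)]$ correction, the total contribution of that component sums to $\langle v^j,c\rangle+\ezs(c)$. The claim is that this entire component-contribution, as an element of $V(Q)^*$, equals $(\langle v^j,c\rangle+\ezs(c))\cdot e[v^j]$.

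\textbf{Why the component collapses to a single $e[v^j]$-term.} Here is the point where the hypothesis $[c,\ezs(c)]\in E_j$ and smoothness in codimension two do the work. The value $\langle v^j,c\rangle+\ezs(c)$ lies in $(0,1)$ if $v^j$ is a non-lattice vertex contribution is needed, and one must check that the only non-lattice vertex possibly encountered on the $v^j\to v(c)$ path in that component is $v^j$ itself (so that, just as in the proof of Lemma \ref{tautco-lem}(i), all the relevant coordinates can be grouped as a single multiple of $e[v^j]$, since $e[v^j]$ by definition (\ref{tautco-6}(2)) is independent of which edge at $v^j$ one picks and measures the whole component). I would argue: if $v^j\in\lat$ then $\ezs(c)=\langle v^j,c\rangle+\ezs(c)$ must already force $v(c)=v^j$ or the component to be trivial (using $0\le\langle v^j,c\rangle+\eta_0(c)<1$ and integrality of the relevant sums as in Lemma \ref{tautco-lem}); if $v^j\notin\lat$, then $v^j$ and $v(c)$ lie in the same component and the path between them uses only edges of that one component, whose total $\ues(c)$-weight is the scalar $\langle v^j,c\rangle+\ezs(c)$ sitting on the single well-defined generator $e[v^j]$ of $V(Q)^*$ for that component. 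Putting the two segments together gives $\ues(c)=\langle v^j_\bullet,-c\rangle+(\langle v^j,c\rangle+\ezs(c))\cdot e[v^j]$, which is the assertion.

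\textbf{Main obstacle.} The delicate part is the claim in the last paragraph that no non-lattice vertex other than $v^j$ intervenes and hence that the $v^j$-to-$v(c)$ piece genuinely collapses to a scalar multiple of the single class $e[v^j]$. This requires carefully using the defining inequality of $E_j$ to control $\eta_0(c)$ and $\ezs(c)$ relative to $v^j$, plus the structure of components of edges from (\ref{def-eps}) and the smoothness of two-dimensional faces; it is the analogue of the somewhat subtle integrality argument at the end of the proof of Lemma \ref{tautco-lem}(i), and I expect the write-up to essentially reduce to that lemma applied on the sub-path from $v^j$ onward. Everything else is a bookkeeping splitting of a path and re-summation along components.
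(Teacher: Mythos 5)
Your overall plan matches the paper's proof exactly: split the $0\to v(c)$ path at $v^j$, evaluate the $0\to v^j$ piece as $\langle v^j_\bullet,-c\rangle$, and case-split on whether $v^j\in\lat$. The bookkeeping you do (the coordinate-sum of the $v^j\to v(c)$ piece is $\langle v^j,c\rangle+\eta_0(c)$, which the $\ezs-\eta_0$ correction bumps up to $\langle v^j,c\rangle+\ezs(c)$) is also exactly right, and for $v^j\in\lat$ the hypothesis $\langle v^j,c\rangle+\ezs(c)<1$ together with non-negativity and integrality forces this quantity to equal $0$, so $v^j$ may be taken as $v(c)$ and the second term vanishes.

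However, the crucial claim in your non-lattice case is stated backwards. You write that ``the only non-lattice vertex possibly encountered on the $v^j\to v(c)$ path in that component is $v^j$ itself,'' but what you actually need (and what the paper proves) is the opposite: \emph{every} vertex strictly between $v^j$ and $v(c)$ on a $c$-monotone path is a non-lattice vertex, so that all edges used belong to the single component of $v^j$ and their coordinates identify to the one generator $e[v^j]$. If any intermediate lattice vertex occurred, the component would break there and the contribution could not collapse to a scalar multiple of $e[v^j]$. The justification you leave as ``I expect the write-up to reduce to Lemma \ref{tautco-lem}(i)'' is in fact a direct one-line integrality argument, independent of that lemma: if $v\in\lat$ lay in the strip $\langle v^j,c\rangle\ge\langle v,c\rangle>\langle v(c),c\rangle$, then $\langle v,c\rangle+\ezs(c)$ would be an integer in the open interval $(0,1)$, a contradiction (upper bound from $[c,\ezs(c)]\in E_j$, lower bound from $\langle v,c\rangle>\langle v(c),c\rangle=-\eta_0(c)\ge -\ezs(c)$). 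Supplying this argument and correcting the direction of the claim closes the gap; with that done your proof coincides with the paper's.
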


\begin{proof}
If $v^j\in\lat$, then the condition 
$\langle v^j,c\rangle +\ezs(c)<1$
is equivalent to $\langle v^j,-c\rangle = \eta_0(c)=\ezs(c)$. Hence, the
second summand in our formula vanishes, and we are done.\\
On the other hand, if $v^j\notin \lat$, then there is not any lattice point
contained in the strip $\langle v^j, c\rangle \ge \langle \bullet,c\rangle
> \langle v(c),c\rangle$. In particular,
every edge on the path from
$v^j$ to $v(c)$ (decreasing the $c$-value at each step) belongs
to the ``component'' induced by $v^j$, cf.\ (\ref{def-eps}). 
Now, our
formula follows from the definition of $\ues(c)$.
\end{proof}

\subsection{}\label{kodSpencPair}
Denoting by $L(\kbb)$ the abelian group of $\Z$-linear relations
of the argument,
we consider the bilinear map
\[
\begin{array}{cccccl}
\Phi: & ^{\kd V_{\Z}}\!/_{\kd \Z\cdot\ku{1}}
& \times & L(\cup_j E_j)
& \longrightarrow & \Z\\
&\ult & , & q & \mapsto & 
\sum_{v,\di} t_\di \, q_{\nu}\, \es_i (c^{\nu})\,.
\end{array}
\]
It is correctly defined,
and we obtain $\Phi(\ult,q)=0$ for $q\in L(E_j)$. 
Indeed, 
\[
\begin{array}{rcl}
\Phi(\ult,q) &=& \sum_{\nu} q_{\nu}\,\langle \ult,\ues(c^{\nu})\rangle\\
&=&
\sum_{\nu} q_{\nu}\cdot \left(\langle v^j_{\ult},-c^{\nu}\rangle
+ (\langle v^j,c^{\nu}\rangle+ \ezs(c^{\nu}))\cdot t_{v^j} \right)\\
&=&
\langle v^j_{\ult}, -\sum_{\nu}q_{\nu}c^{\nu}\rangle +
\left( \sum_{\nu}q_{\nu}\,\ezs(c^{\nu})- \langle 
v^j,\sum_{\nu}q_{\nu}c^{\nu}\rangle \right)
\cdot t_{v^j}
\;=\;0\,.
\end{array}
\vspace{2ex}
\]

\begin{theorem}\label{ks-thm}
The Kodaira-Spencer map of the family $\bar{X}\times_{\bar{S}}{\CM} \to
\bar{\CM}$ of (\ref{flat-them}) equals the map
\vspace{-1ex}
\[
T_0\bar{\CM} \;=\; ^{\kd V_{\C}}\!\!\big/_{\kd \C\cdot\ku{1}}
\longrightarrow
\left( \left. ^{\kd L_{\C}(E\cap\partial\sigv)} \! \right/
\! \sum_j
L_{\C}(E_j) \right)^*
=T^1_Y(-R)
\]
induced by the previous pairing. 
Moreover, this map is an isomorphism.
\vspace{-1ex}
\end{theorem}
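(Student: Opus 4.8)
The plan is to prove Theorem \ref{ks-thm} in two stages: first identify $T^1_Y(-R)$ explicitly as the claimed dual space, and then verify that the Kodaira-Spencer map of our family is exactly the map induced by the pairing $\Phi$ and that it is bijective.

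\textbf{Computing $T^1_Y(-R)$.} Since $Y=\toric{\sigma}$ is smooth in codimension two and $R$ is primitive, I would invoke the toric description of $T^1$ in a fixed multidegree (as in \cite{gorenstein}): $T^1_Y(-R)$ is the cohomology at the middle of a complex built from the vertices of $Q=Q(R)$, where the contribution of a vertex $v^j$ is governed by the "span condition" $\langle a^j,r\rangle<\langle a^j,R\rangle$, i.e.\ by the set $E_j$ introduced in (\ref{kodSpencE}). Concretely, one obtains a short exact-type presentation whose cokernel description reads $T^1_Y(-R)=\bigl(L_\C(E\cap\partial\sigv)\big/\sum_j L_\C(E_j)\bigr)^*$, the dual of the space of $\Z$-linear relations among the boundary generators modulo those relations that are "localized" at a single vertex. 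The verification that the relevant higher cohomology vanishes (so that $T^1$ really is this cokernel, not just a subquotient of it) uses precisely the codimension-two smoothness hypothesis, exactly as in the Gorenstein paper; I would cite that computation rather than redo it.

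\textbf{Identifying the Kodaira-Spencer map.} Here the perturbed equations $F_{(a,b,\alpha,\beta)}$ of (\ref{flat-3}) do the work. The tangent space $T_0\bar{\CM}$ is $V_\C/\C\cdot\ku{1}$ by Theorem \ref{def-4}(1) together with the fact that $\bar{\CM}$ is cut out by equations vanishing to order $\ge 2$ at the origin (the generators $g_{\ku d,k}$ with $k\ge1$ have no linear part once one passes to the $t_i-t_j$ coordinates and imposes $k\ge 1$ -- for $k=1$ they define $V$ itself). Differentiating the family in the direction $\ult\in V_\C$ amounts to reading off, for each defining equation $f_{(a,b,\alpha,\beta)}$ of $Y$, the linear-in-$\ult$ term of its lift $F_{(a,b,\alpha,\beta)}$; by the explicit formula this term is the monomial $\ku Z^{[c,\ues(c)]}$ times $\bigl(\langle\ult,\textstyle\sum_\nu a_\nu\ues(c^\nu)\rangle-\langle\ult,\sum_\nu b_\nu\ues(c^\nu)\rangle\bigr)$ times the appropriate $t$-power, i.e.\ exactly $\Phi(\ult,-)$ evaluated on the relation $(a,b)\in L(\cup_j E_j)$ encoded by $f_{(a,b,\alpha,\beta)}$. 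Thus the Kodaira-Spencer class of $\ult$ is the functional $q\mapsto\Phi(\ult,q)$ on $L(E\cap\partial\sigv)$, which by the computation at the end of (\ref{kodSpencPair}) kills each $L(E_j)$ and hence descends to the quotient -- giving precisely the asserted map.

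\textbf{Bijectivity.} It remains to show the induced linear map $V_\C/\C\cdot\ku1\to\bigl(L_\C(E\cap\partial\sigv)/\sum_j L_\C(E_j)\bigr)^*$ is an isomorphism. Both sides are finite-dimensional, so it suffices to prove injectivity and surjectivity separately, or injectivity plus equality of dimensions. For injectivity: if $\Phi(\ult,q)=0$ for all $q\in L(E\cap\partial\sigv)$, then the linear functional $[c,\ezs(c)]\mapsto\langle\ult,\ues(c)\rangle$ on $\Z E$ factors through the quotient by all relations, hence extends to a linear form on $\lat^*$, which together with the equations defining $V$ forces $\ult\in\C\cdot\ku1$; this is where the combinatorics of how the $\ues(c^\nu)$ sit inside $V^*$ -- especially the "sum over coordinates equals $\ezs(c)$" identity from Lemma \ref{tautco-lem}(iii) -- is needed. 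I expect the surjectivity/dimension count to be the main obstacle: one must match $\dim V-1$ against $\dim\bigl(L_\C(E\cap\partial\sigv)\big/\sum_j L_\C(E_j)\bigr)$, and this is essentially the statement that the Minkowski-summand space $V(Q)$ is "as big as" the obstruction-free part of $T^1$. I would handle it by exhibiting, for each generator of the relation space not killed by a single $E_j$, an explicit element of $V(Q)$ pairing nontrivially with it -- geometrically, a way of moving a compact edge of $Q$ that is "seen" by the corresponding deformation -- mirroring the argument in \cite{gorenstein} but now accounting for non-lattice vertices via the component coordinates and for the tail cone $\Qinf$ via the fact that those directions contribute no compact edges and hence drop out of both sides.
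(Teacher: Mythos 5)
Your identification of the Kodaira--Spencer map matches the paper: one perturbs $t_i\mapsto t+t_i^0\keps$, reads off the $\keps$-linear part of $F_{(a,b,\alpha,\beta)}$, and recognizes it as $\Phi(\ult^0,a-b)$ times the appropriate monomial. The reduction to the pairing $\Phi$, together with the observation that $\Phi(\ult,\cdot)$ kills $L(E_j)$, is exactly the first half of the paper's proof, which cites \cite[Thm.\ (3.4)]{al3} for the explicit description of $T^1_Y(-R)$.

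The genuine gap is in the bijectivity step. You propose to prove injectivity and then surjectivity via a dimension count, but you do not complete either. Your injectivity sketch claims that if $\Phi(\ult,q)=0$ for all $q\in L(E\cap\partial\sigv)$, then the assignment $[c,\ezs(c)]\mapsto\langle\ult,\ues(c)\rangle$ extends to a linear form on $M$, and that this ``forces $\ult\in\C\cdot\ku{1}$''; the last implication is not justified and is in fact the content that needs proving (the structure of $\ues(c)$ as built from paths to $v(c)$ and the component coordinates must be used). You yourself flag the surjectivity/dimension count as ``the main obstacle'' and only describe what you would attempt. The paper sidesteps both problems at once: it invokes the known isomorphism $\Psi:T^1_Y(-R)\stackrel{\sim}{\to}V_\C/\C\cdot\ku{1}$ from \cite[(2.6)]{flip} (valid under the codimension-two smoothness hypothesis), reconstructs $\Psi$ explicitly in four steps via the cohomological description of $T^1_Y(-R)$ from \cite[(6.1)]{al3}, and then verifies directly that $\Psi\circ\Phi=\mathrm{id}$ on $V_\C/\C\cdot\ku{1}$. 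This gives bijectivity in one stroke without a separate dimension count. To complete your proof along your proposed route you would essentially have to reprove the content of \cite[(2.6)]{flip}, which is precisely the nontrivial step you leave open; citing that result and checking $\Psi\circ\Phi=\mathrm{id}$ (as the paper does) is the economical way to close the gap.
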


\begin{proof}
 Using the same symbol $\kI$ for the ideal $\kI\subseteq \C[t_1,\dots,t_N]$
as well as for the intersection $\kI\cap \C[t_i -t_j \,|\; 1\leq
i,j\leq N]$, cf.\ (\ref{def-higherdeg}),
our family corresponds to the flat 
$\C[t_i -t_j ]/{\kd \kI} $-module
$\C[\ku{Z},\ult]/{\kd (\kI, F_\bullet (\ku{Z}, \ult))}$.
Now, we fix a non-trivial tangent vector $\ult^0\in V_{\C}$. Via
$t_i \mapsto t+t_i ^0\,\keps$, it induces the infinitesimal family given by
the flat $\C[\keps]/\!_{\kd \keps^2}$-module
\[
A_{\ult^0} := \;\left.^{\kd \C[\ku{z}, t, \keps]}\! \right/ \!
_{\kd (\keps^2, F_\bullet(\ku{z}, t+\ult^0\,\keps))}\,.
\]
To obtain the associated $A(Y)$-linear map 
$I/{\kd I^2} \rightarrow A(Y)$
with $I:=(f_\bullet (\ku{z},t))$ denoting the ideal of $Y$ in 
$\C^{w+1}$,
we have to compute the images of $f_\bullet (\ku{z},t)$ in
$\keps \,A(Y) \subseteq
A_{\ult^0}$ and divide them by $\keps$:
Using the notation of (\ref{flat-3}), in
$A_{\ult^0}$ it holds true that
\begin{eqnarray*}
0 &=& F_{(a,b,\alpha,\beta)}(\ku{z}, t+\ult^0\,\keps)\\
&=& \!\begin{array}[t]{l}
f_{(a,b,\alpha,\beta)}(\ku{z}, t+t^0_1\,\keps) -\\[0.5ex]
\quad - \ku{z}^{[c,\ues(c)]}\cdot
\left( (t+\ult^0\,\keps)^{\alpha e_1 + \sum_{\nu} a_{\nu} \ues(c^{\nu})-\ues(c)}
- (t+\ult^0\,\keps)^{\beta e_1 + \sum_{\nu} b_{\nu} \ues(c^{\nu})-\ues(c)}
\right)\,.
\end{array}
\end{eqnarray*}
The relation $\keps^2=0$ yields
\[
f_{(a,b,\alpha,\beta)}(\ku{z}, t+t^0_1\keps) =
f_{(a,b,\alpha,\beta)}(\ku{z}, t) + \keps\cdot
(\alpha \,t^{\alpha-1} \,t_1^0 \,\ku{z}^a -
\beta \,t^{\beta-1}\, t_1^0 \,\ku{z}^b)\,,
\]
and similarly we can expand the other terms.
Eventually, we obtain
\begin{eqnarray*}
f_{(a,b,\alpha,\beta)}(\ku{z},t) &=& \!
\begin{array}[t]{l}
-\keps\,t_1^0\, (\alpha\, t^{\alpha -1}\,\ku{z}^a -
\beta\, t^{\beta -1}\,\ku{z}^b )\,+\,
\keps \, \ku{z}^{[c,\ues(c)]}\,
t^{\alpha+\sum_{\nu} a_{\nu} \ezs(c^{\nu})-\ezs(c)-1}\cdot
\vspace{1ex}\\
\quad\qquad\qquad\qquad\qquad\cdot
\left[
t_1^0\, (\alpha-\beta)
+\sum_i t_i ^0\, \left(\sum_{\nu} (a_{\nu}-b_{\nu})\es_i (c^{\nu}) \right) \right]
\end{array}
\vspace{1ex}\\
&=&
\, \keps\cdot x^{\sum_{\nu} a_{\nu} [c^{\nu},\ezs(c^{\nu})] +[0,\alpha-1]} \cdot
\left( \sum_i t_i ^0\, \left(\sum_{\nu} (a_{\nu}-b_{\nu})\es_i (c^{\nu}) \right) \right)\,.
\end{eqnarray*}
Note that, in $\keps \, A(Y)$, 
we were able to replace the variables $t$ and $z_{\nu}$
by $x^{[\ku{0},1]}$ and $x^{[c^{\nu},\ezs(c^{\nu})]}$, respectively.
\\[1ex]
On the other hand, the explicit description of
$T^1_Y(-R)$ as $L_{\C}(E\cap\partial\sigv)/\sum_jL_{\C}(E_j)$
was given in \cite[Theorem (3.4)]{al3}. It even says
that the map $I/I^2\to A(Y)$ with
$\big(t^\alpha\,\ku{z}^a - t^\beta\,\ku{z}^b\big) \mapsto 
\left( \sum_{i,v} t_i ^0\,  (a_{\nu}-b_{\nu})\es_i (c^{\nu}) \right)
\cdot x^{\sum_{\nu} a_{\nu} [c^{\nu},\ezs(c^{\nu})] +[0,\alpha-1]}$
corresponds to $q\mapsto \sum_{i,v} t_i ^0\,q_{\nu}\,\es_i (c^{\nu})
=\Phi(\ku{t}^0,q)$.

In \cite{flip} it was already proven that there is an isomorphism 
$\Psi:T_Y^1(-R)\to\VQ/\ku{1}$ if $Y$ is smooth in codimension two. Now we want
to show that the composition $\Psi\circ\Phi$ yields the identity on
$\VQ/\ku{1}$. Thus we have to take a closer look at the construction of
$\Psi$.\par
Let us switch from the notion of $L(E_j)$ to the notion of $\spa(E_j)$.
The advantage lies in that $\spa(E_j)$ is much easier to
describe than $L(E_j)$:
\begin{remark}
\[
\spa_{\R}E_j =
\begin{cases}
0 & \langle a^j, R\rangle =0\\
(a^j)^{\bot} & \langle a^j, R\rangle =1\\
M_{\R} & \langle a^j, R\rangle \ge 2
\end{cases}.
\]
\end{remark}
To change between the two notions, let $q\in L(E_0)$ be given. Then
decompose $q=\sum_j q^j$ with $q^j\in\Z^{E_j}$. Define
$w^j:=\sum_{\nu}q^j_{\nu}r^{\nu},\ r^\nu\in E$
so that the vector $(w^1,\ldots, w^M)$
is contained
in $\ker(\oplus_j\spa E_j\to M)$. \par
Let $\tau < \sigma$ be a face. We define the following set
\[
E_{\tau}:=\bigcap_{a^j\in\tau}E_j
\]
and obtain a complex $\spa(E)_{\bullet}$ with
\[
\spa_{-k}(E)=\bigoplus_{
\begin{array}{c}
\tau\mbox{ a face of }\sigma\\
\dim{\tau}=k
\end{array}} E_{\tau}
\]
and the obvious differentials. Now the dual complex yields
the following description of $T_Y^1(-R)$:
\begin{theorem} {\bf (\cite{al3} (6.1))} 
The homogenous piece of $T_Y^1$ in degree $-R$ is given by
\[
T_Y^1(-R)\ =\ H^1\left(\spa(E)_{\bullet}^*\otimes_{\Z}\C\right),
\]
i.e. $T_Y^1(-R)$ equals the complexification of the cohomology of
the subsequence
\[
N_{\R}\to\bigoplus_j(\spa_{\R}E_j)^*
\to \bigoplus_{\langle a^j,a^j\rangle<\sigma}(\spa_{\R}E_{ij})^*
\] 
of the dual complex to $\spa(E)_{\bullet}$.
\end{theorem}
Given an element $b\in T_Y^1(-R)$, we can
build an element $\ult\in\VQ/\ku{1}$. First we will show how to build
$\ult\in\VQ$ from a given $b\in\bigoplus_j(\spa_{\R}E_j)^*$. Then we will show
that the action of $N_{\R}$ equals the action of $\R\cdot\ku{1}$ on $\VQ$.
\begin{asparaitem}
\item[{\it Step 1:}] By the above remark, we can represent 
$b\in\bigoplus_j(\spa_{\R}E_j)^*$ by a family of
\begin{itemize}
\item $b^j\in N_{\R}$ if $\langle a^j,R\rangle\ge 2$ and
\item $b^j\in N_{\R}/\R\cdot a^j$ if $\langle a^j,R\rangle=1$.
\end{itemize}
We will only consider the $b^j$ for $\langle a^j, R\rangle \ge 1$, otherwise
$\spa E_j$ will be zero. This corresponds to the fact that $v^j=a^j/\langle
a^j,R\rangle$ is
not a vertex of $Q$.\par
Dividing by the image of $N_{\R}$ means shifting the family by a common
vector $c\in N_{\R}$. The condition of our family $\{b^j\}$ mapping
onto $0$ means that $b^j$ and $b^j$ have to be equal on $\spa_{\R}E_{ij}$ for
each compact edge $\ko{v^j,v^j}<Q$. Since
\[
(a^j, a^k)^{\bot}\subseteq\spa_{\R}E_{jk}\subseteq\spa_{\R}E_j\cap\spa_{\R}E_k
\]
we obtain $b^j-b^k\in\R a^j+\R a^k$.
\item[{\it Step 2:}] Let us introduce new coordinates
\[
\ko{b}^j:= b^j-\langle b^j,R\rangle v^j\ \in\ R^{\bot}.
\]
The condition $b^j-b^k\in\R a^j+\R a^j$ changes into the condition
$\ko{b}^j-\ko{b}^k\in\R v^j+\R v^k$. We assume
$\langle a^j,R\rangle,\langle a^k,R\rangle\not= 0$, i.e.
$a^j,a^k\notin R^{\bot}$. On the other hand, we know
$\ko{b}^j,\ko{b}^k\in R^{\bot}$, hence $\ko{b}^j-\ko{b}^k\in R^{\bot}$.
This yields
\[
\ko{b}^j-\ko{b}^k\in(\R v^j+\R v^k)\cap R^{\bot}
=\R (v^j- v^k).
\]
Thus we obtain
\[
\ko{b}^j-\ko{b}^k = t_{jk}\cdot(v^j- v^k).
\]
Now collect these $t_{ij}$ for each compact edge 
$\ko{v^j,v^k}<Q$. Together they yield an element $\ult_b\in\R^N$.
\item[{\it Step 3:}]
Consider shifting the family by a common vector $c\in N_{\R}$, i.e. 
$b^{j'}:=b^j+c$. We obtain
\begin{eqnarray*}
t_{jk}'(v^j- v^k)
 & = & \ko{b}^{j'}-\ko{b}^{k'}\\
& = & (b^j+c-\langle b^j, R\rangle v^j-\langle c, R\rangle v^j)
-(b^k+c-\langle b^k, R\rangle v^k-\langle c, R\rangle v^k)\\
& = & \ko{b}^j-\ko{b}^k-\langle c, R\rangle\cdot(v^j- v^k)
= (t_{jk}-\langle c, R\rangle)\cdot(v^j- v^k).
\end{eqnarray*}
Hence, the action of $c\in N_{\R}$ comes down to an action of
$\langle c, R\rangle$ only, and we obtain $\ult_b\in\R^N/\ku{1}$.
\item[{\it Step 4:}]
It is rather easy to see that $\ult_b$ satisfies the 2-face equations of $\VQ$.
In \cite{flip} (2.7) it is proven that $\ult_b$ also satisfies the equations
given by non lattice vertices of $Q$ since $Y$
is smooth in codimension two. We obtain the following Corollary:
\begin{corX}
{\bf (\cite{flip} (2.6))}If $Y$ is smooth in codimension two 
\[
\begin{array}{cccc}
\Psi: & T_Y^1(-R) & \longrightarrow & \VQ_{\C}/\ku{1}\\
& b & \mapsto & \ult_b
\end{array}
\]
is an isomorphism.
\end{corX}
\item[{\it Step 5:}]
Let us now combine $\Phi$ with this isomorphism.
Denoting by $t_j$ the coordinate of $\ku{t}$ corresponding to the
component arising from a non-lattice vertex $v^j$ and
defining $g^j:=-\sum_{\nu}q^j_{\nu}[c^{\nu},\ezs(c^{\nu})]$,
we obtain
\begin{eqnarray*}
\Phi(\ult,q)=\sum_j\Phi(\ult,q^j)&
= &\sum_{j,\nu}q_{\nu}^j\langle \ult,\ues(c^{\nu})\rangle\\
& = &\sum_{j,\nu}q_{\nu}^j\langle v_{\ult}^j,-c^{\nu}\rangle
+\sum_{j,\nu}q_{\nu}^j[\ezs(c^{\nu})+\langle v^j,-c^{\nu}\rangle]\cdot
t_j\\
& = & \sum_{j,\nu}\langle(v_{\ult}^j,0),g^j\rangle
-\langle(v^j,1),g^j\rangle\cdot t_j
\end{eqnarray*}
i.e.\ $\Phi$ assigns to $\ult$ exactly the vertices of the corresponding
Minkowski summand $Q_{\ult}$. Thus, applying $\Psi$ to
$\Phi(\ult)$
yields the identity.
\end{asparaitem}
\end{proof}

\section{The obstruction map}\label{obst}

Now we can approach the main goal of this paper:
\begin{theorem}\label{obst-mainthm}
The family of Theorem \ref{flat-them}
with base space $\bar{\mathcal M}$
is the versal deformation of $Y$ of degree $-R$.
\end{theorem}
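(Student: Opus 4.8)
The strategy is the standard two-step criterion for versality restricted to a fixed multidegree: verify that the constructed family (i) induces an isomorphism on the tangent level, i.e.\ the Kodaira--Spencer map in degree $-R$ is bijective, and (ii) is obstruction-theoretically ``complete'', i.e.\ the obstruction map $T^2_Y(-R)^* \to (\text{quadratic part of the ideal of }\bar\CM)$ — or more precisely the map recording how the $F_\bullet$ satisfy the lifted relations modulo $\Fm^2$ — is injective. Point (i) is already done: it is exactly Theorem \ref{ks-thm}. So the entire content of the proof is point (ii), the injectivity of the obstruction map, carried out along the lines of \cite[\S5]{gorenstein}.

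\textbf{Step 1: set up the obstruction calculus.} Using the flatness criterion of Theorem \ref{flat-crit}, the family $\bar X\times_{\bar S}\CM\to\bar\CM$ is encoded by the equations $f_\bullet(\ku z,t)$ of $Y\subseteq\C^{w+1}$, their lifts $F_\bullet(\ku Z,\ku t)$, and the three types of relations listed in (\ref{flat-three-Rel}) together with their lifts. The obstruction map sends a class in $T^2_Y(-R)$, represented by a relation $r$ among the $f_\bullet$, to the ``error term'' by which the chosen lift of $r$ fails to be an honest relation among the $F_\bullet$ — this error lies in $\Fm_{\bar\CM}/\Fm_{\bar\CM}^2 \otimes A(Y)$ and, after projecting, in $\Fm_{\bar\CM}^2 \otimes (\text{stuff})$; it is the obstruction to extending the deformation by one order. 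Concretely, from the computation of relation (iii) in (\ref{flat-three-Rel}) one reads off that each such error term factors through the ideal $\kI$ cutting out $\bar\CM$, and the claim is that the induced linear map on $T^2_Y(-R)$ is injective.

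\textbf{Step 2: identify both sides explicitly.} I would invoke the description of $T^2_Y(-R)$ from \cite{al3} (the analogue of the $\spa(E)_\bullet$-complex used for $T^1$ in Theorem \ref{ks-thm}) to write $T^2_Y(-R)$ as a concrete cohomology group of a complex built from the $E_\tau$, and compare it with the second-order part of $\kI$, i.e.\ with $\kI/(\kI\cdot\Fm + \Fm^3)$ where $\Fm$ is the maximal ideal of $\C[t_i-t_j]$ at $\ku 1$. On the $\bar\CM$-side, Theorem \ref{def-4} tells us that $\kI$ is generated by the $g_{\ku d,2}(\ult)=\sum_i d_i t_i^2$ (the degree-$2$ generators) modulo linear ones, and these quadratic generators are precisely the pairings $\Phi$ applied to the relations $q$, squared in the appropriate sense — so the obstruction map, unwound, is again governed by the bilinear pairing $\Phi$ of (\ref{kodSpencPair}) that already computed $T^1$. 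The point is that the same combinatorial data (the $\es_i(c^\nu)$, the relations $q\in L(E_0)$, the Minkowski structure of $Q$) controls both $T^1$ and the obstructions, and injectivity on $T^2$ follows because a nonzero class in $T^2_Y(-R)$ produces a nonzero quadratic form among the $g_{\ku d,2}$, hence a genuine obstruction.

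\textbf{Main obstacle.} The hard part is Step 2: matching the cohomological description of $T^2_Y(-R)$ with the quadratic part of $\kI$ and checking that no $T^2$-class maps to zero. In the Gorenstein case \cite{gorenstein} this is delicate bookkeeping with the generator set $E$ and the face poset of $\sigma$; here the extra complications are the non-lattice vertices of $Q$ (forcing the $t_i=t_j$ identifications and the ``component'' coordinates) and the non-compact tail cone $Q^\infty$ (forcing the $\ues(c)$ versus $\ku\eta(c)$ dichotomy and the rounding $\ezs=\lceil\eta_0\rceil$). I expect the smoothness-in-codimension-two hypothesis to be exactly what makes the relevant pieces of the $T^2$-complex manageable — as it already was for the isomorphism $\Psi$ in \cite{flip} — so that, after the dust settles, injectivity of the obstruction map reduces to the statement that the quadratic equations $g_{\ku d,2}$ are ``as many as'' the obstruction classes, with no collapse. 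Granting this, versality in degree $-R$ follows from the formal criterion, completing the proof.
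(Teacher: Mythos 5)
Your overall architecture is right and matches the paper: by Arndt's criterion (\cite{versell}), versality in degree $-R$ reduces to the Kodaira--Spencer map being an isomorphism in degree $-R$ (Theorem \ref{ks-thm}) plus injectivity of the obstruction map (Theorem \ref{obst-them}). Your Step~1 also correctly identifies the flatness criterion and the three relation types as the engine of the obstruction calculus. But Step~2, which you yourself flag as the hard part, contains a genuine error that would derail the argument.

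You assert that ``Theorem \ref{def-4} tells us that $\kI$ is generated by the $g_{\ku d,2}(\ult)=\sum_i d_i t_i^2$ (the degree-$2$ generators) modulo linear ones,'' and you build the whole identification around the quadratic part of $\kI$ and $T^2_Y(-R)$ alone. This is false: $\kI$ is generated by $g_{\ku d,k}$ for \emph{all} $k\geq 1$, and in general generators of degree $\geq 3$ are indispensable (the worked example in Section~7 needs $k\leq 3$, and the bound comes from $k_0 = \sum_i d_i^+$, not from $2$). Consequently the target of the adjoint obstruction map is the entire $\Z$-graded space $W=\kI/\ktI=\oplus_{k\geq 1}W_k$, and the source is the whole collection $T^2_Y(-kR)$, $k\geq 1$, described via the sets $E_j^k$ --- not just $T^2_Y(-R)$. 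The paper's proof builds, for each $k$, a map $\psi^{(k)}:T^2_Y(-kR)^*\to W_k$ out of path-difference data $g^{(k)}(\rho,c)$ along the $1$-skeleton of $Q$, checks compatibility on edges $\langle v^j,v^l\rangle<Q$ (Lemma \ref{obst-sets}), identifies $\lambda^*=\sum_k c_k\psi^{(k)}$ (Proposition \ref{obst-7}), and finally proves surjectivity of each $\psi^{(k)}$ by factoring through an auxiliary surjection $p^k:\oplus_{\keps<Q}\A^*\otimes\C\surj T^2_Y(-kR)^*$ with $\psi^{(k)}\circ p^k=\sum_\keps g^{(k)}(\ku\keps,\bullet)$, invoking Lemma \ref{obst-6}(1). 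None of this multi-degree bookkeeping appears in your outline; ``the same pairing $\Phi$ governs everything, squared'' is not a substitute, because $\Phi$ was a pairing against $V_\Z/\Z\cdot\ku 1$ detecting first-order data, whereas here the essential inputs are the refined representatives $\uecs$, the non-closed paths $\ku\lambda(v),\ku\mu^\nu(v)$, and their differences modulo $\ktI$.

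A smaller but telling slip: injectivity of $\lambda:W^*\to T^2_Y$ is proved in the paper by showing the adjoint $\lambda^*:T^2_Y(-kR)^*\to W_k$ is surjective for each $k$; your phrasing ``a nonzero class in $T^2_Y(-R)$ produces a nonzero quadratic form'' has the arrow pointing the wrong way. Getting the direction right matters because it tells you that what must be constructed is a preimage in $T^2_Y(-kR)^*$ for each generator of $W_k$ --- which is exactly what the $p^k$ and $g^{(k)}(\ku\keps,\bullet)$ machinery delivers.
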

By \cite{versell} we know that a deformation is versal if the Kodaira-Spencer-map
is an isomorphism and the Obstruction map is injective. In section \ref{kodaira}
we proved the first condition for the degree-$R$-part of $T_Y^1$. The following
section will prove the second condition, i.e. the injectivity of the
obstruction map.

\subsection{}\label{obst-1} Dealing with obstructions in the 
deformation theory of $Y$
involves the $A(Y)$-module $T_Y^2$. Usually it is defined in the following way:
Let
\[
m:=\{([a,\alpha ],[b,\beta ])\in\N^{w+1}\times \N^{w+1}|
\begin{array}[t]{rcl}
\sum_\nu a_{\nu}c^{\nu}& =& \sum_\nu b_{\nu}c^{\nu}\hspace{0.7em}
\mbox{ and}\\
\sum_\nu a_{\nu}\ezs (c^{\nu})+\alpha & =& 
\sum_\nu b_{\nu}\ezs (c^{\nu})+\beta \}
\end{array}
\]
denote the set parametrizing the equations $f_{(a,b,\alpha,\beta)}$ generating
the ideal $I\subseteq\C[\ku{z},t]$.
Then
\[
\CR:=\ker{(\varphi:\C[\ku{z},t]^m\twoheadrightarrow I)}
\]
is the module of linear relations between these equations; it contains the
submodule $\RI_0$ of
the so-called Koszul relations, i.e.\ those of the form 
$f_j\cdot e_i-f_i\cdot e_j$ 
where $f_i$, $f_j$ are generators of 
$I$ and $e_i,\ e_j$ are their corresponding preimages under
$\varphi$. 

\begin{defX}
$\;T_Y^2:=\;^{\displaystyle \Hom(^{\displaystyle \RI}\! \left/ \!
_{\displaystyle \RI_0}\right., A)} \! \left/ \!
_{\displaystyle \Hom(\C[\ku{z},t]^m,A)} \right.\; .
\vspace{-1ex}
$
\end{defX}

Recall that $R=[\ku{0},1]$.
To obtain information about $T^2$ not only in degree $-R$ but also
in its multiples $k\cdot R,\; k\ge 2$, we define,
analogously to $E_j$, the following sets:
\[
E_j^k:=
\{[c^{\nu},\ezs(c^{\nu})]\,|\; \langle a^j ,c^{\nu}\rangle + \ezs(c^{\nu}) 
< k\}
\cup \{R\} \subseteq \sigv\cap M\,.
\vspace{1ex}
\]
For the following theorem it is very important that $\sigma$ has smooth
two-dimensional faces, i.e. that $Y$ is smooth in codimension two:
\begin{theorem}
\cite{al3} The vector space $T_Y^2$ is $M$-graded, and
in degree $-kR$ it equals
\[ T_Y^2(-kR)=
 \left(
\frac{
\ker(\oplus_jL_{\C}(E^k_j)\to L_{\C}(E))}
{\im(\oplus_{\langle v^i,v^j\rangle <Q} L_{\C}(E^k_i\cap E^k_j)\to
\oplus_i L_{\C}(E^k_i))}
\right)^*.
\]
\end{theorem}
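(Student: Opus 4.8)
We only indicate the plan; this is the main result of \cite{al3}, and the argument runs one cohomological step beyond the computation of $T^1_Y(-R)$ recalled in Section~\ref{kodaira}. First I would set up the grading. Embed $Y\hookrightarrow\C^{w+1}$ by the generating set $E=\{[\ku{0},1],[c^1,\ezs(c^1)],\dots,[c^w,\ezs(c^w)]\}$ of $\sigv\cap M$ and assign to $z_\nu$ the degree $[c^\nu,\ezs(c^\nu)]$ and to $t$ the degree $R=[\ku{0},1]$. Then $\C[\ku{z},t]$, the binomial ideal $I$, the relation module $\RI$, the Koszul submodule $\RI_0$, and hence $T_Y^2=\Hom(\RI/\RI_0,A)\big/\Hom(\C[\ku{z},t]^m,A)$ all become $M$-graded, which gives the first assertion. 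Since $A=\C[\sigv\cap M]$ has $A_0=\C$ and $\dim_\C A_r\le1$ for every $r$, forming $\Hom(-,A)$ in a fixed degree amounts to dualizing a single finite-dimensional $\C$-vector space; this accounts for the transpose $(\,\cdot\,)^*$ in the statement. It then remains to identify $\Hom(\RI/\RI_0,A)_{-kR}$, $\Hom(\C[\ku{z},t]^m,A)_{-kR}$, and the image of the latter in the former, combinatorially.

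\emph{Localizing along the face lattice.} Next I would sheafify on $Y$ and use that $T_Y^2$ is supported on the singular locus, a union of orbit closures $\overline{\orb(\tau)}$. Computing the degree-$(-kR)$ part of $\Hom(\RI/\RI_0,A)$ stratum by stratum and gluing, one finds it is the degree-$(-1)$ homology of a complex indexed by the faces $\tau<\sigma$: the term at $\tau$ is the group $L_\C(E^k_\tau)$ of $\C$-linear relations among the generators that are still ``below $kR$'' at the generic point of $\overline{\orb(\tau)}$, where $E^k_\tau:=\bigcap_{a^j\in\tau}E^k_j$ and $E^k_{\{0\}}:=E$, and the differentials are the alternating-sign restriction maps $L_\C(E^k_\tau)\to L_\C(E^k_{\tau'})$ over codimension-one face inclusions $\tau'<\tau$. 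Only faces of dimension $0,1,2$ affect this particular homology, so the complex reduces to $\bigoplus_{\langle v^i,v^j\rangle<Q}L_\C(E^k_i\cap E^k_j)\to\bigoplus_jL_\C(E^k_j)\to L_\C(E)$ (over $2$-faces, rays, and the cone $\{0\}$ respectively), and the subgroup coming from the liftable, Koszul-type homomorphisms $\Hom(\C[\ku{z},t]^m,A)_{-kR}$ is exactly $\im(\bigoplus_{\langle v^i,v^j\rangle<Q}L_\C(E^k_i\cap E^k_j)\to\bigoplus_jL_\C(E^k_j))$. Dualizing the resulting $\ker/\im$ gives the displayed formula; the extra element $R\in E^k_j$ and the bound ``$<k$'' are what the shift from degree $-R$ to degree $-kR$, together with the presence of the variable $t$, force on the relevant generator sets.

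\emph{Where the hypothesis enters, and the main obstacle.} The step I expect to be hardest is justifying that the term attached to a face $\tau$ of dimension $\le2$ is the elementary relation group $L_\C(E^k_\tau)$ rather than a more complicated ``generalized'' relation object built from a simplicial complex (a link) at $\overline{\orb(\tau)}$. This is precisely where one uses that $Y$ is smooth in codimension two: the cones $\langle a^i,a^j\rangle<\sigma$, and hence all faces of dimension $\le2$, are smooth, so the relevant auxiliary complexes are acyclic and their cohomology collapses to the plain $L_\C(E^k_\tau)$; without smoothness in codimension two one would only obtain the above description up to such corrections. Carrying this out --- equivalently, showing that the degree-$(-kR)$ part of the Lichtenbaum--Schlessinger complex of $\C[\sigv\cap M]$ splits along the face lattice of $\sigma$ with these entries --- is the technical core, and it parallels, one degree higher, the corresponding step for $T^1$ in \cite{al3}.
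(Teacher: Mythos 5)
The paper does not prove this statement — it is quoted verbatim from \cite{al3}, so there is no in-paper proof to compare against. What one can compare with is the shape of \cite{al3}'s argument as it surfaces in the pieces actually invoked elsewhere in the paper: Theorem~(3.4) of \cite{al3} (the explicit description of $T^1_Y(-R)$ as $L_\C(E\cap\partial\sigv)/\sum_j L_\C(E_j)$), Theorem~(3.5) (how to represent a cohomology class by an explicit map on $\RI/\RI_0$, used in (\ref{obst-7})), and (6.1) (the face-lattice cohomological description). These indicate that \cite{al3} works directly: fix the $M$-grading on $\C[\ku z,t]$, $I$, $\RI$, $\RI_0$; pin down the degree-$(-kR)$ pieces of $\Hom(\RI/\RI_0,A)$ and $\Hom(\C[\ku z,t]^m,A)$ by analyzing which binomial relations survive evaluation in $A$; and then recognize the answer as the middle homology of a face-indexed complex. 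Your plan, by contrast, organizes the computation around sheafifying $\CT^2_Y$, using that its support is the singular locus, and gluing stratum contributions along the orbit decomposition. That is a genuinely different route — local-to-global rather than a direct degree-by-degree analysis of the relation module — and while it is a reasonable conceptual picture, it would require its own comparison lemmas (that the contribution over $\overline{\orb(\tau)}$ really is $L_\C(E^k_\tau)$, and that the resulting gluing spectral sequence collapses) that are of roughly the same technical weight as the direct computation you are trying to avoid.

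Two further caveats. First, the claim that taking $\Hom(-,A)$ in a fixed degree ``amounts to dualizing a single finite-dimensional vector space'' because $\dim_\C A_r\le 1$ is stated too glibly: $\RI/\RI_0$ has infinitely many nonzero graded pieces, and the dual $(\cdot)^*$ in the formula is the outcome of a nontrivial identification, not an immediate consequence of $\dim A_r\le 1$. Second, you correctly place the use of smoothness in codimension two — the paper explicitly flags it as ``very important'' for this theorem — and your intuition that it forces the stratum contributions at two-faces to collapse to the plain $L_\C(E^k_i\cap E^k_j)$ is in the right spirit; but ``the relevant auxiliary complexes are acyclic'' is a placeholder, not an argument, and you also assert without justification that no face of dimension $\ge 3$ can contribute. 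You flag this yourself as the hard step, which is fair; but as written the plan has not actually done it.
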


\subsection{}\label{obst-2} 
In this section we build up the so-called obstruction map.
It detects all infinitesimal extensions of our family over $\bar{\CM}$ to
a flat family over some larger base space. By $\kI$ let us denote
\[
\kI:=(g_{\ku{d},k}(\ult-t_1\, |\, \ku{d}\in\VQ^{\bot}\cap\Z^N,\,k\ge 1)
\subseteq\C[t_i-t_j]
\]
the homogenous ideal of the base space $\bar{\CM}$.
Let $\kI_1$ denote the degree $1$ part of $\kI$.
We define the subideal $\ktI\subseteq \kI$ by:
\[ \ktI= (t_i-t_j)_{i,j}\cdot\kI + \kI_1\cdot\C [t_i-t_j]\subseteq
\C[t_i-t_j|\ 1\le i,j\le\m].\]
Then $W:=\kI/\ktI$ is a
finite-dimensional, $\Z$-graded vector space. It comes as the kernel in the
exact sequence
\[
0\to W \to\,
 ^{\displaystyle \C[t_i-t_j]} \left/ _{\displaystyle \ktI}\right. 
\to\,
^{\displaystyle \C[t_i-t_j]} \left/ _{\displaystyle \kI}\right. \to 0.
\]
Identifying $t$ with $t_1$ and $\ku{z}$ with $\ku{Z}$, the tensor product with
$\C[\ku{z},t]$ over $\C$ yields the important exact sequence
\[
0\to W\otimes\C[\ku{z},t] \to\,
 ^{\displaystyle \C[\ku{Z},\ult]} \left/ 
 _{\displaystyle \ktI\cdot\C[\ku{Z},\ult]}\right. \to\,
^{\displaystyle \C[\ku{Z},\ult]} \left/ 
_{\displaystyle \kI\cdot \C[\ku{Z},\ult]}\right. \to 0.
\]
Now, let $s$ be any relation with coefficients in $\C[\ku{z},t]$ between the
equations $f_{(a,b,\alpha,\beta)}$, i.e.
\[
\sum s_{(a,b,\alpha,\beta)}f_{(a,b,\alpha,\beta)}=0\ \mbox{ in }\C[\ku{z},t]. 
\]
By flatness of our family, cf.\ (\ref{flat-three-Rel}), the components of $s$ can be 
lifted to $\C[\ku{Z},\ult]$ obtaining an $\tilde{s}$, such that
\[\sum \tilde{s}_{(a,b,\alpha,\beta)}F_{(a,b,\alpha,\beta)}=0\ \mbox{ in }
^{\displaystyle \C[\ku{Z},\ult]} \left/ 
_{\displaystyle \kI\cdot \C[\ku{Z},\ult]}\right.. \]
In particular, each relation $s\in\RI$ induces some element 
\[\lambda(s):=\sum\tilde{s}F\in\ 
W\otimes\C[\ku{z},t]\subseteq\ ^{\displaystyle \C[\ku{Z},\ult]} \left/ 
 _{\displaystyle \ktI\cdot\C[\ku{Z},\ult]}\right.\]
which does not depend on choices after the additional projection to $W\otimes_{\C}A(Y)$. 
This procedure describes a certain element 
$\lambda\in T_Y^2\otimes_{\C}W=\Hom(W^*,T_Y^2)$ called the
obstruction map.

The remaining part of Sect.\ \ref{obst} contains the proof of the
following theorem:

\begin{theorem} \label{obst-them}
The obstruction map $\lambda:W^*\to T_Y^2$ is injective.
\end{theorem}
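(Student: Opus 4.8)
The plan is to compute the obstruction map $\lambda\colon W^*\to T_Y^2$ explicitly on generators and show that a nonzero class in $W^*$ (equivalently, a nonzero graded piece of $W=\kI/\ktI$) produces a nonzero obstruction. Since everything is $\Z$-graded, it suffices to work degree by degree: $W$ is supported in degrees $kR$ for $k\ge 2$ (degree-$1$ relations have been divided out in $\ktI$), so I would fix $k\ge 2$ and show that the component $\lambda_k\colon W(kR)^*\to T_Y^2(-kR)$ is injective. By the description recalled just before the theorem, $T_Y^2(-kR)$ is the dual of $H^1$ of the complex $\bigoplus_j L_\C(E_j^k)\to L_\C(E)$ modulo the image of $\bigoplus_{\langle v^i,v^j\rangle<Q}L_\C(E_i^k\cap E_j^k)$, and $W(kR)$ is spanned by the classes of the polynomials $g_{\ku d,k}(\ult)=\sum_i d_i t_i^k$ with $\ku d\in\VQ^\bot\cap\Z^N$, modulo the submodule $\ktI$.

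The key steps, in order: \emph{(1)} Pick a homogeneous generator of $W(kR)$, represented by $g_{\ku d,k}$, and trace it through the construction of $\lambda$. A relation $s$ among the $f_{(a,b,\alpha,\beta)}$ that realizes this element is one of the "toric" type where the $\ues$-corrections accumulate: following (\ref{flat-three-Rel}) and the lift $F=f(\ku Z,t_1)-\ku Z^{[c,\ues(c)]}(\cdots)\ku t^{-\ues(c)}$, the defect $\sum\tilde s\,F$ that lands in $W\otimes\C[\ku z,t]$ is governed by the difference $\sum_\nu a_\nu\ues(c^\nu)-\ues(c)$, which by Lemma \ref{tautco-lem}(ii)--(iii) is a nonnegative vector whose coordinate-sum equals $\sum_\nu a_\nu\ezs(c^\nu)-\ezs(c)$. \emph{(2)} Match this defect with a cocycle representing a class in $T_Y^2(-kR)$: the data $(a,b)$ with $\sum a_\nu c^\nu=\sum b_\nu c^\nu$ and the height bookkeeping reaching level $k$ is exactly what indexes elements of $\ker(\oplus_j L_\C(E_j^k)\to L_\C(E))$, and the $\ues$-correction supplies the $1$-cochain $\bigoplus_j(\spa E_j^k)^*$-component; the vertex map $v^j_\bullet$ from Lemma in (\ref{kodSpencE}) identifies the pairing with $t_i$-coordinates. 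This is the same bookkeeping as in the Kodaira--Spencer computation of Section \ref{kodaira}, now carried out one "level" higher. \emph{(3)} Show the matching is faithful: if $\lambda(g_{\ku d,k})$ were a coboundary in the $T_Y^2(-kR)$-complex, i.e.\ came from $\bigoplus_{\langle v^i,v^j\rangle<Q}L_\C(E_i^k\cap E_j^k)$, then the corresponding $\ku d$ would already be expressible through the edge-relations plus a degree-$1$ term, hence would lie in $\ktI$ and be zero in $W$. This uses smoothness in codimension two exactly where Section \ref{kodaira} did: it guarantees that the non-lattice-vertex equations of $\VQ$ are automatically satisfied, so that the complex computing $T_Y^2$ and the defining equations of $\bar\CM$ see the same linear algebra.

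I expect step \emph{(3)}, the exactness/injectivity matching, to be the main obstacle: one must show that the map from $W(kR)$ to the $H^1$-group is not merely well-defined but has no kernel, which amounts to proving that the only relations among the $f$'s whose lifts are $\ktI$-trivial are those coming from the Koszul submodule $\RI_0$ together with relations of types (i)--(ii) of (\ref{flat-three-Rel}). Concretely, I would set up the bookkeeping so that the three relation types of (\ref{flat-three-Rel}) (and the Koszul ones) map to $0$ in $W\otimes A(Y)$ by inspection — types (i) and (ii) lift to honest relations among the $F$'s, and type (iii) produces a product of a factor in the ideal of $0\in\bar\CM$ with an $\bar X$-equation, hence lies in $\ktI\cdot\C[\ku Z,\ult]$ after projection — and then argue that every remaining relation modulo these is detected by its image, because a vanishing image forces the corresponding $\ku d$ to be a $\Z$-combination of the generators of $\VQ^\bot$ coming from $2$-face and non-lattice-vertex equations with an extra factor $(t_i-t_j)$ or a degree-one term, i.e.\ $g_{\ku d,k}\in\ktI$. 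As with Theorems \ref{def-4} and \ref{tautco-prop}, the argument parallels the Gorenstein case of \cite{gorenstein}; the new ingredient is handling the tail cone $\Qinf$ and the non-lattice vertices, for which Lemma \ref{tautco-lem} and the remark on $\spa E_j$ in Section \ref{kodaira} provide exactly the needed control.
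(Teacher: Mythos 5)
Your high-level skeleton is right: reduce degree by degree, use the explicit description of $T^2_Y(-kR)$ via $L(E^k_j)$, trace the three relation types from (\ref{flat-three-Rel}) (types (i)--(ii) obstruct trivially, type (iii) is where the action happens), and invoke smoothness in codimension two to align the linear algebra governing $\VQ$ and the $T^2_Y$-complex. This parallels the paper. However, your step (3) --- which you rightly flag as the crux --- is not actually carried out; as written it is essentially a restatement of injectivity (``a vanishing image forces $g_{\ku d,k}\in\ktI$'') without a mechanism for producing the required expression.

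The paper resolves the crux by dualizing and working with explicit preimages rather than trying to characterize the kernel directly: it defines maps $\psi^{(k)}_j\colon L(E^k_j)\to W_k$ via paths and the operator $g^{(k)}(\rho,\bullet)$, shows they glue to $\psi^{(k)}\colon T^2_Y(-kR)^*\to W_k$, and identifies $\lambda^*=\sum_k c_k\,\psi^{(k)}$ (Proposition \ref{obst-7}). Surjectivity of each $\psi^{(k)}$ is then obtained by precomposing with an auxiliary map $p^k\colon\oplus_{\keps<Q}\A^*\otimes_\R\C\to T^2_Y(-kR)^*$ so that $\psi^{(k)}\circ p^k=\sum_{\keps<Q} g^{(k)}(\ku\keps,\bullet)$, and this sum is surjective onto $W_k$ by Lemma in (\ref{obst-6})(1) (the 2-face vectors $\ku d(\keps,c)$ together with $e_i-e_j$ for non-lattice vertices span $V^\bot$, and $t_i-t_j\in\kI_1$ kills the latter in $W_k$). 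Two ingredients you would need to supply to make your plan work, and which are absent from the proposal: (a) this spanning/surjectivity lemma, and (b) the degree-reduction Corollary in (\ref{obst-6}) stating $g_{\ku d,k}(\ult-t_1)\in\ktI$ for $k>\sum_i d^+_i$ --- the non-lattice case of which requires the $\ku d_{ij}$-correction trick and is genuinely new compared to the Gorenstein setting. Without these, your ``matching is faithful'' step has no handle; with them, the injectivity is most naturally phrased (as the paper does) as surjectivity of $\lambda^*$ rather than as a kernel computation on $\lambda$.
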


\subsection{}\label{obst-3}
We have to improve our notation of Sects.\ \ref{tautco}
and \ref{flat}. Since 
$\CM\subseteq \bar S\subseteq\C^{\m}$,
we were able to use the toric equations, cf.\ (\ref{def-4}) during 
computations modulo $\kI$.
In particular, the exponents $\ku{\eta}\in V^*$ of $\ult$
needed only to be known modulo $V^{\bot}$; it was enough to define 
$\ues(\kbb)$ as elements of $V_{\Z}^*$.\\
However, to compute the obstruction map, we have to deal with the smaller
ideal $\ktI\subseteq \kI$. Let us start with refining the definitions
of (\ref{tautco-6}):
\\[1.7ex]
(i) For each vertex $v\in Q$, we
choose certain paths through the
1-skeleton of $Q$:
\vspace{-1.7ex}
\begin{itemize}
\item[$\bullet$]
$\ku{\lambda}(v):=$ path from $0\in Q$ to $v\in Q$.
\item[$\bullet$]
$\ku{\mu}^{\nu}(v):=$ path from $v\in Q$ to $v(c^{\nu}) \in Q$ such that
$\mu_i^{\nu}(v) \langle d^i , c^{\nu} \rangle \le 0$ for each $i=1,\dots,N$.
\item[$\bullet$]
$\ku{\lambda}^{\nu}(v):= \ku{\lambda}(v) + \ku{\mu}^{\nu}(v)$ is then a path
from $0\in Q$ to $v(c^{\nu})$ depending on $v$.
\end{itemize}

(ii)
For each $c\in (\Qinf)^{\veee}$, 
we use the vertex $v(c)$ to define
\[
\ku{\eta}^c(c):= \left[ -\lambda_1(v(c))\langle d^1,c\rangle,
\dots, -\lambda_N(v(c)) \langle d^N,c\rangle \right] \in \Q^N
\]
and
\[
\ku{\eta}^c(c^{\nu}):= \left[ -\lambda_1^{\nu}(v(c))\langle d^1,c^{\nu}\rangle,
\dots, -\lambda_N^{\nu}(v(c)) \langle d^N,c^{\nu}\rangle \right] \in \Q^N\,.
\]
Additionally, if $v(c)\notin \lat$, 
we need to define
\[
\uecs(c):=\ku{\eta}^c(c)+[\ezs(c)-\eta_0(c)]\cdot e[v(c)]\] and \[
\uecs(c^{\nu}):=\ku{\eta}^c(c^{\nu})+[\ezs(c^{\nu})-\eta_0(c^{\nu})]\cdot 
e[v(c^{\nu})].
\]
\\[0.5ex]
(iii)
For each $c\in (\Qinf)^{\veee}\cap\lat^*$ we fix a representation $c=\sum_{\nu} 
p_{\nu}^c\, c^{\nu}$
($p_{\nu}^c\in \N$) such that $[c,\ezs(c)]=\sum_{\nu} p_{\nu}^c [c^{\nu},\ezs(c^{\nu})]$. 
(That
means, $c$ is represented only by those generators $c^{\nu}$ that define
faces of $Q$ containing the face defined by $c$ itself.)
Now, we improve the definition of the polynomials $F_\bullet(\ku{Z},\ult)$
given in (\ref{flat-3}). 
Let $a,b\in \N^w, \alpha,\beta\in \N$ such that
$([a,\alpha],[b,\beta])\in m\subseteq\N^{w+1}\times \N^{w+1}$, i.e.\
\[
c:= \sum_{\nu} a_{\nu}\, c^{\nu} = \sum_{\nu} b_{\nu}\, c^{\nu}\quad \mbox{and}\quad
\sum_{\nu} a_{\nu}\, \ezs(c^{\nu}) + \alpha = \sum_{\nu} b_{\nu} \, \ezs(c^{\nu}) + \beta\,.
\vspace{-0.5ex}
\]
Then
\[
F_{(a,b,\alpha,\beta)}(\ku{Z},\ult) := f_{(a,b,\alpha,\beta)}(\ku{Z},t_1) -
\ku{Z}^{\ku{p}^c} \cdot
\left( \ult^{\alpha e_1 +\sum_{\nu} a_{\nu} \uecs(c^{\nu})-\uecs(c)} -
\ult^{\beta e_1 +\sum_{\nu} b_{\nu} \uecs(c^{\nu})-\uecs(c)} \right).%
\]

\subsection{}\label{obst-threeRel}
We need to discuss the same three types of relations as we did in
(\ref{flat-three-Rel}). 
Since there is only one single element $c\in \lat$ involved
in the relations (i) and (ii), computing modulo $\ktI$ instead of $\kI$
makes no difference in these cases --
we always obtain $\lambda(s)=0$.
Let us consider the third
relation $s:=\left[\ku{z}^r\cdot f_{(a,b,\alpha,\beta)} -
f_{(a+r,b+r,\alpha,\beta)} =0\right]$ $\;(r\in \N^w$).
We will use the following notation:
\vspace{-1ex}
\begin{itemize}
\item
$c:=\sum_{\nu} a_{\nu} \, c^{\nu} = \sum_{\nu} b_{\nu} \, c^{\nu};\quad
\ku{p}:=\ku{p}^c;\quad
\ues:= \uecs;$
\vspace{1ex}
\item
$\tilde{c}:=\sum_{\nu} (a_{\nu}+r_{\nu}) \, c^{\nu} = \sum_{\nu} (b_{\nu}+r_{\nu}) \, c^{\nu} =
\sum_{\nu} (p_{\nu}+r_{\nu}) \, c^{\nu}; \quad
\ku{q}:=\ku{p}^{\tilde{c}};\quad
\ku{\tilde{\eta}}^*:= \ku{\eta}^{*\tilde{c}};$%
\vspace{1ex}
\item
$\xi:= \sum_i\left(\left( \sum_{\nu} (p_{\nu}+r_{\nu})\tilde{\eta}^*_i(c^{\nu})\right)
-\tilde{\eta}^*_i(\tilde{c})\right) = \sum_{\nu} (p_{\nu}+r_{\nu})\ezs(c^{\nu}) -
\ezs(\tilde{c})\,.$
\end{itemize}
Using the same lifting of $s$ to $\tilde{s}$ as in 
(\ref{flat-three-Rel}) yields
\[
\begin{array}{rcl}
\lambda(s)&=&\!\begin{array}[t]{l}
\ku{Z}^r\cdot F_{(a,b,\alpha,\beta)}- F_{(a+r,b+r,\alpha,\beta)} \,-
\vspace{1ex}\\
\qquad\qquad -\,
\left( \ult^{ \alpha e_1 + \sum_{\nu}a_{\nu}\ues(c^{\nu})-\ues(c)} -
\ult^{ \beta e_1 + \sum_{\nu}b_{\nu}\ues(c^{\nu})-\ues(c)} \right)\cdot
F_{(q,p+r,\xi,0)}
\end{array}
\vspace{2ex}\\
&=&\!
\begin{array}[t]{l}
-\ku{Z}^{p+r}\cdot \left( \ult^{\alpha e_1 + \sum_{\nu}(a_{\nu}-p_{\nu})\ues(c^{\nu})}
- \ult^{\beta e_1 + \sum_{\nu}(b_{\nu}-p_{\nu})\ues(c^{\nu})} \right)\, +
\vspace{1ex}\\
+\,
\ku{Z}^q\cdot \left( \ult^{\alpha e_1 + \sum_{\nu}(a_{\nu}+r_{\nu}-q_{\nu})\ku{\tilde{\eta}}^*(c^{\nu})}
- \ult^{\beta e_1 + \sum_{\nu}(b_{\nu}+r_{\nu}-q_{\nu})\ku{\tilde{\eta}}^*(c^{\nu})} \right)\, -
\vspace{1ex}\\
-\,
\left( \ult^{\alpha e_1 + \sum_{\nu}(a_{\nu}-p_{\nu})\ues(c^{\nu})}
- \ult^{\beta e_1 + \sum_{\nu}(b_{\nu}-p_{\nu})\ues(c^{\nu})} \right) \cdot
\left(\ku{Z}^q\, \ult^{\sum_{\nu}(p_{\nu}+r_{\nu}-q_{\nu})\ku{\tilde{\eta}}^*(c^{\nu})}
-\ku{Z}^{p+r}\right)
\end{array}
\vspace{2ex}\\
&=&\!
\begin{array}[t]{l}
\ku{Z}^q\cdot
\left( \ult^{\alpha e_1 + \sum_{\nu}(a_{\nu}+r_{\nu}-q_{\nu})\ku{\tilde{\eta}}^*(c^{\nu})}
-\ult^{\alpha e_1 + \sum_{\nu}(p_{\nu}+r_{\nu}-q_{\nu})\ku{\tilde{\eta}}^*(c^{\nu}) +
\sum_{\nu} (a_{\nu}-p_{\nu})\ues(c^{\nu})}
\right) -
\vspace{1ex}\\
\qquad - \ku{Z}^q\cdot\left(
\ult^{\beta e_1 + \sum_{\nu}(b_{\nu}+r_{\nu}-q_{\nu})\ku{\tilde{\eta}}^*(c^{\nu})}
- \ult^{\beta e_1 + \sum_{\nu}(p_{\nu}+r_{\nu}-q_{\nu})\ku{\tilde{\eta}}^*(c^{\nu}) +
\sum_{\nu} (b_{\nu}-p_{\nu})\ues(c^{\nu})} \right).
\end{array}
\end{array}
\]
As in (\ref{flat-three-Rel})(iii), 
we can see that $\lambda(s)$ vanishes modulo $\kI$ 
(or even in $A(\bar{S})$) -- 
just identify $\ues$ and $\ku{\tilde{\eta}}^*$.

\subsection{}\label{obst-5}
In (\ref{obst-1}) we already mentioned the isomorphism
\[
W\otimes_{\C} \C[\ku{z},t] \stackrel{\sim}{\longrightarrow}\,
^{\kd \kI\cdot \C[\ku{Z},\ult]} \!\!\left/ \! 
_{\kd \ktI\cdot \C[\ku{Z},\ult]} \right.
\]
obtained by identifying $t$ with $t_1$ and $\ku{z}$ with $\ku{Z}$. Now, with 
$\lambda(s)$, we have obtained an element of the right hand side, which has to 
be interpreted as an element of $W\otimes_{\C} \C[\ku{z},t]$.
For this, we quote from
\cite[Lemma (7.5)]{gorenstein}:

\begin{lemma}
Let $A,B\in \N^N$ such that $\ku{d}:=A-B\in V^\bot$, 
i.e.\ $\ult^A-\ult^B \in
\kI\cdot \C[\ku{Z},\ult]$. Then, via the previously mentioned isomorphism,
$\ult^A-\ult^B$ corresponds to the element
\[
\sum_{k\geq 1} c_k\cdot g_{\ku{d},k}(\ult-t_1)\cdot t^{k_0-k}
\in W\otimes_{\C}
\C[\ku{z},t],
\]
where $k_0:=\sum_i A_i$, and
$c_k$ are some constants 
occurring in the context of symmetric polynomials,
cf.\ \cite[(3.4)]{gorenstein}.
In particular, the coefficients from $W_k$ vanish for $k>k_0$.
\end{lemma}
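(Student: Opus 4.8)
The plan is to pass to the ``shifted'' variables $u_i := t_i - t_1$ (so that $u_1 = 0$), for which $\C[t_i-t_j] = \C[u_2,\dots,u_N]$ and $\C[\ku{Z},\ult] = \C[\ku{Z},t_1]\otimes_{\C}\C[t_i-t_j]$; this tensor decomposition is exactly what produces the isomorphism $W\otimes_{\C}\C[\ku{z},t]\xrightarrow{\sim}\kI\cdot\C[\ku{Z},\ult]\big/\ktI\cdot\C[\ku{Z},\ult]$ recalled in (\ref{obst-5}), with $t\leftrightarrow t_1$. The single structural input about $\ktI$ that I will need is the inclusion $\Fm\cdot\kI\subseteq\ktI$ for $\Fm := (u_2,\dots,u_N)$: indeed $u_i\cdot\kI = (t_i-t_1)\cdot\kI\subseteq (t_i-t_j)_{i,j}\cdot\kI\subseteq\ktI$. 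Two elementary observations: each generator $g_{\ku{d},k}(\ult-t_1) = \sum_i d_i u_i^k$ of $\kI$ is homogeneous of positive degree $k$ in $u$, hence lies in $\Fm$; and since $\ku{1}\in V(Q)$ one has $\sum_i d_i = \langle\ku{d},\ku{1}\rangle = 0$, so $\ult^A-\ult^B$ is homogeneous of degree $k_0 = \sum_i A_i = \sum_i B_i$ in $t_1,\dots,t_N$. (The membership $\ult^A-\ult^B\in\kI\cdot\C[\ku{Z},\ult]$ is either quoted from the toric equations of (\ref{def-4}) after extracting the common monomial factor $\ult^{A\wedge B}$, or re-read off the expansion produced below.)

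The computational core is an exponential identity. For a vector $C\in\N^N$ set $g_{C,k}(u) := \sum_i C_i u_i^k$ and $G_C(s) := \sum_{k\ge 1}\frac{(-1)^{k-1}}{k}g_{C,k}(u)s^k\in\C[u][[s]]$. Taking $\log$ of $\prod_i(1+u_is)^{C_i}$ termwise gives $\prod_i(1+u_is)^{C_i} = \exp(G_C(s))$. Putting $G_{\ku{d}}(s) := G_A(s)-G_B(s) = \sum_{k\ge1}\frac{(-1)^{k-1}}{k}g_{\ku{d},k}(u)s^k$ (using $g_{A,k}(u)-g_{B,k}(u) = g_{\ku{d},k}(u)$) we obtain
\[
\prod_i(1+u_is)^{A_i} - \prod_i(1+u_is)^{B_i} = e^{G_A(s)}\bigl(1 - e^{-G_{\ku{d}}(s)}\bigr).
\]
Now read this modulo $\Fm\cdot\kI$. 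The series $1 - e^{-G_{\ku{d}}(s)} = \sum_{j\ge1}\frac{(-1)^{j-1}}{j!}G_{\ku{d}}(s)^j$ has all $s$-coefficients in $\kI$, and those coming from the terms with $j\ge2$ already lie in $\Fm\cdot\kI$ (they are products of at least two of the $g_{\ku{d},k}(u)$, each of which lies in $\Fm$); hence $1 - e^{-G_{\ku{d}}(s)}\equiv G_{\ku{d}}(s)\pmod{\Fm\kI[[s]]}$. Likewise $e^{G_A(s)}-1$ has all $s$-coefficients in $\Fm$, so $(e^{G_A(s)}-1)\cdot(1-e^{-G_{\ku{d}}(s)})$ has coefficients in $\Fm\cdot\kI$. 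Adding the two pieces and using $\Fm\kI\subseteq\ktI$ gives $e^{G_A(s)} - e^{G_B(s)} = e^{G_A(s)}(1 - e^{-G_{\ku{d}}(s)})\equiv G_{\ku{d}}(s)\pmod{\ktI[[s]]}$.

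It remains to dehomogenize in $t_1$. Writing $\prod_i(1+u_is)^{C_i} = \sum_{j=0}^{k_0}\gamma^C_j s^j$ with $\gamma^C_j\in\C[u]$ homogeneous of degree $j$, the rescaling $t_1+u_i = t_1(1+u_i/t_1)$ yields $\ult^C = \sum_{j=0}^{k_0}\gamma^C_j t_1^{k_0-j}$, hence $\ult^A-\ult^B = \sum_{j=0}^{k_0}(\gamma^A_j-\gamma^B_j)t_1^{k_0-j}$. Comparing $s$-coefficients in the congruence above gives $\gamma^A_0-\gamma^B_0 = 0$ and $\gamma^A_j-\gamma^B_j\equiv\frac{(-1)^{j-1}}{j}g_{\ku{d},j}(u)\pmod{\ktI}$ for $1\le j\le k_0$; for $k>k_0$ the coefficient $\gamma^A_k-\gamma^B_k$ vanishes identically, so no component of $W_k$ occurs there. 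Thus, in $W\otimes_{\C}\C[\ku{z},t]$,
\[
\ult^A-\ult^B \;\equiv\; \sum_{k=1}^{k_0}\frac{(-1)^{k-1}}{k}\,g_{\ku{d},k}(\ult-t_1)\,t^{k_0-k},
\]
which is the asserted formula with $c_k = (-1)^{k-1}/k$ (note $c_1=1$, though the $k=1$ term dies in $W$ because $g_{\ku{d},1}(\ult-t_1)\in\kI_1\subseteq\ktI$). The only place that demands care is the bookkeeping of which mixed products fall into $\Fm\cdot\kI$ rather than merely into $\kI$: the whole argument hinges on the fact that it suffices to compute modulo the coarser ideal $\Fm\cdot\kI$ — available inside $\ktI$ thanks to $(t_i-t_j)\cdot\kI\subseteq\ktI$ — so that the finer description of $\ktI$ never has to be unpacked.
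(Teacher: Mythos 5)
Your proof is correct and self-contained. The paper itself gives no argument for this lemma --- the text preceding it simply says the claim is quoted from Lemma~(7.5) of \cite{gorenstein} --- so you are reconstructing the derivation rather than duplicating one in this paper. Your route via the generating-function identity $\prod_i(1+u_is)^{C_i}=\exp G_C(s)$ is exactly the Newton-type machinery lurking behind the reference to symmetric-polynomial constants in \cite[(3.4)]{gorenstein}, and the bookkeeping is handled precisely where care is needed: the inclusion $\Fm\cdot\kI\subseteq\ktI$ (from the $(t_i-t_j)_{i,j}\cdot\kI$ summand, taking $j=1$); the observation that each $g_{\ku{d},k}(\ult-t_1)$ lies in $\Fm$ because $u_1=0$; the resulting congruence $1-e^{-G_{\ku{d}}(s)}\equiv G_{\ku{d}}(s)\pmod{\Fm\kI[[s]]}$; and the fact that $(e^{G_A(s)}-1)(1-e^{-G_{\ku{d}}(s)})$ contributes only inside $\Fm\kI$. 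The dehomogenization $\ult^C=\sum_j\gamma^C_jt_1^{k_0-j}$ and the degree count that kills everything above $k_0$ are also right, and your $c_k=(-1)^{k-1}/k$ is the standard normalization and consistent with what the statement expects (in any case the lemma, as phrased, only asserts the existence of some such constants). No gaps.
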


\begin{corX}
Transferred to $W\otimes_{\C}\C[\ku{z},t]$, the element $\lambda(s)$ equals
\[
\sum_{k\geq 1} c_k \cdot g_{\ku{d},k}(\ult-t_1)\cdot \ku{z}^q \cdot 
t^{k_0-k}\quad
\mbox{ with} \begin{array}[t]{rcl}
\ku{d} &:=& \sum_{\nu} (a_{\nu}-b_{\nu})\cdot \left(\ku{\tilde{\eta}}(c^{\nu})-\ku{\eta}(c^{\nu}) 
\right)\,,\\
k_0 &:=& \alpha + \sum_{\nu} (a_{\nu}+r_{\nu})\, \ezs(c^{\nu}) - \ezs(\tilde{c})\,.
\end{array}
\]
The coefficients vanish for $k>k_0$.
\vspace{-1ex}
\end{corX}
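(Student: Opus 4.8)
\emph{Proof idea.}
The assertion is obtained by applying the quoted Lemma, term by term, to the closed form for $\lambda(s)$ found at the end of (\ref{obst-threeRel}). There $\lambda(s)$ was written, inside $(\kI\cdot\C[\ku{Z},\ult])/(\ktI\cdot\C[\ku{Z},\ult])$, as $\ku{Z}^q\bigl((\ult^{A}-\ult^{B})-(\ult^{A'}-\ult^{B'})\bigr)$ with $A:=\alpha e_1+\sum_\nu(a_\nu+r_\nu-q_\nu)\,\ku{\tilde{\eta}}^*(c^\nu)$, $B:=\alpha e_1+\sum_\nu(p_\nu+r_\nu-q_\nu)\,\ku{\tilde{\eta}}^*(c^\nu)+\sum_\nu(a_\nu-p_\nu)\,\uecs(c^\nu)$, and $A',B'$ obtained from $A,B$ by replacing $\alpha,a_\nu$ with $\beta,b_\nu$ (using the refined, path-dependent symbols of (\ref{obst-3})--(\ref{obst-threeRel})). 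The plan is to transport this through the isomorphism $W\otimes_\C\C[\ku{z},t]\cong(\kI\cdot\C[\ku{Z},\ult])/(\ktI\cdot\C[\ku{Z},\ult])$ of (\ref{obst-5}), which intertwines multiplication by $\ku{z}^q$ with multiplication by $\ku{Z}^q$; hence it suffices to describe the images of $\ult^A-\ult^B$ and of $\ult^{A'}-\ult^{B'}$ and then to subtract.

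For each of the two binomials I would check the hypotheses of the Lemma. All exponents above lie in $\N^N$ — which is exactly what the special choice of the connecting paths $\ku{\mu}^\nu(v)$ in (\ref{obst-3}) and the positivity remark of (\ref{flat-three-Rel}) were arranged to secure — so the binomials really lie in $\C[\ku{Z},\ult]$. Moreover $A-B=\sum_\nu(a_\nu-p_\nu)\bigl(\ku{\tilde{\eta}}^*(c^\nu)-\uecs(c^\nu)\bigr)\in V^\bot\cap\Z^N$: integrality is automatic from $A,B\in\N^N$, and $\ku{\tilde{\eta}}^*(c^\nu)$ and $\uecs(c^\nu)$ are two $\Q^N$-representatives of one and the same element of $V^*$ (they differ only by which path through the $1$-skeleton of $Q$ is used), so their difference lies in $V^\bot$; the same applies to $A'-B'$. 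Hence the Lemma presents $\ult^A-\ult^B$ as $\sum_{k\ge1}c_k\,g_{A-B,k}(\ult-t_1)\,t^{k_0^{(1)}-k}$ with $k_0^{(1)}=\sum_i A_i$, and likewise $\ult^{A'}-\ult^{B'}$ with exponent $k_0^{(2)}=\sum_i A'_i$.

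Next I would compute $k_0^{(1)},k_0^{(2)}$ and check that they coincide. The coordinates of $\ku{\tilde{\eta}}^*(c^\nu)$ sum to $\ezs(c^\nu)$ — the computation in Lemma \ref{tautco-lem}(iii) applies verbatim to any path ending at $v(c^\nu)$ — so $k_0^{(1)}=\alpha+\sum_\nu(a_\nu+r_\nu-q_\nu)\,\ezs(c^\nu)$; inserting $\sum_\nu q_\nu\,\ezs(c^\nu)=\ezs(\tilde{c})$, which holds because of the fixed representation $[\tilde{c},\ezs(\tilde{c})]=\sum_\nu q_\nu[c^\nu,\ezs(c^\nu)]$ from (\ref{obst-3}), turns this into $\alpha+\sum_\nu(a_\nu+r_\nu)\,\ezs(c^\nu)-\ezs(\tilde{c})$, which is the $k_0$ of the statement. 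The same computation gives $k_0^{(2)}=\beta+\sum_\nu(b_\nu+r_\nu)\,\ezs(c^\nu)-\ezs(\tilde{c})$, and $k_0^{(1)}=k_0^{(2)}$ because $\sum_\nu a_\nu\,\ezs(c^\nu)+\alpha=\sum_\nu b_\nu\,\ezs(c^\nu)+\beta$ is one of the two relations defining the index set $m$ of (\ref{obst-1}).

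Finally, since $k_0^{(1)}=k_0^{(2)}=:k_0$ and $g_{\ku{d},k}(\ult-t_1)=\sum_i d_i(t_i-t_1)^k$ is additive in $\ku{d}$, the element $\lambda(s)$ corresponds to $\ku{z}^q\sum_{k\ge1}c_k\,g_{\ku{d},k}(\ult-t_1)\,t^{k_0-k}$ with $\ku{d}:=(A-B)-(A'-B')$, and it remains to rewrite this $\ku{d}$ in the claimed shape. The $e[v(c^\nu)]$-corrections inside $\ku{\tilde{\eta}}^*(c^\nu)$ and inside $\uecs(c^\nu)$ coincide — each equals $[\ezs(c^\nu)-\eta_0(c^\nu)]\,e[v(c^\nu)]$, depending on $c^\nu$ alone — so $\ku{\tilde{\eta}}^*(c^\nu)-\uecs(c^\nu)=\ku{\tilde{\eta}}(c^\nu)-\ku{\eta}(c^\nu)$, where $\ku{\tilde{\eta}}(c^\nu)$ and $\ku{\eta}(c^\nu)$ denote $\ku{\tilde{\eta}}^*(c^\nu)$ and $\uecs(c^\nu)$ with that $e[v(c^\nu)]$-term dropped; hence $\ku{d}=\sum_\nu\bigl((a_\nu-p_\nu)-(b_\nu-p_\nu)\bigr)\bigl(\ku{\tilde{\eta}}(c^\nu)-\ku{\eta}(c^\nu)\bigr)=\sum_\nu(a_\nu-b_\nu)\bigl(\ku{\tilde{\eta}}(c^\nu)-\ku{\eta}(c^\nu)\bigr)$. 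That the $W_k$-coefficients vanish for $k>k_0$ is inherited straight from the Lemma. No single step here is deep; the one place that really wants care — and the natural spot for a sign or indexing slip — is running the last two paragraphs in parallel for the $(A,B)$- and $(A',B')$-binomials: the non-negativity of all exponents, the cancellation of the $e$-terms, and the agreement of the two $k_0$'s. Once those are settled, the stated formula follows at once.
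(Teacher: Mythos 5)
Your proposal is correct and follows the paper's own argument essentially step for step: apply the quoted Lemma separately to the $a$- and $b$-summands of the $\lambda(s)$-formula from (\ref{obst-threeRel}), observe that the two $k_0$-values coincide (using $\sum_i\ku{\tilde{\eta}}^*_i(c^\nu)=\ezs(c^\nu)$, $\sum_\nu q_\nu\ezs(c^\nu)=\ezs(\tilde c)$, and the defining relation of the index set $m$), and compute $\ku{d}=\ku{d}^{(a)}-\ku{d}^{(b)}$, noting that the $e[v(c^\nu)]$-corrections inside $\ku{\tilde{\eta}}^*(c^\nu)$ and $\uecs(c^\nu)$ cancel. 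The only thing you add beyond the paper is explicitly verifying the Lemma's hypotheses (non-negative exponents, differences integral and in $V^\bot$), which the paper leaves implicit; that is a welcome bit of extra care rather than a deviation.
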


\begin{proof} 
Since the $e[v(c)]$-terms kill each other,
one can easily see, that
\[ \ku{d} = \sum_{\nu} (a_{\nu}-b_{\nu})\cdot \left(\ku{\tilde{\eta}}(c^{\nu})-\ku{\eta}(c^{\nu}) 
\right) = \sum_{\nu} (a_{\nu}-b_{\nu})\cdot \left(\ku{\tilde{\eta}}^*(c^{\nu})-\ues(c^{\nu}) \right). \]
We apply the previous lemma to both the $a$- and the $b$-summand
of the $\lambda(s)$-formula of
(\ref{obst-threeRel}). For the first one we obtain
\begin{eqnarray*}
\ku{d}^{(a)} &=& \!
\begin{array}[t]{l}
[ \alpha e_1 + \sum_{\nu}(a_{\nu} +r_{\nu} -q_{\nu})\, \ues(c^{\nu})] \, -
\vspace{0.5ex}\\
\qquad\qquad - \,[\alpha e_1 + \sum_{\nu}(p_{\nu}+r_{\nu}-q_{\nu}) \, \ues(c^{\nu})
+\sum_{\nu} (a_{\nu}-p_{\nu})\, \ues(c^{\nu}) ]
\end{array}\\
&=&
\sum_{\nu} (a_{\nu}-p_{\nu})\cdot \left(\ku{\tilde{\eta}}^*(c^{\nu})-\ues(c^{\nu}) \right)
\qquad \mbox{and}
\vspace{2ex}\\
k_0 &=&
\sum_i \left(\alpha e_1 + \sum_{\nu}(a_{\nu}+r_{\nu}-q_{\nu})\,
\ku{\tilde{\eta}}^*(c^{\nu})\right)_i\\
&=&\alpha + \sum_{\nu}(a_{\nu}+r_{\nu}-q_{\nu}) \,\ezs(c^{\nu})\,
=\,\alpha + \sum_{\nu} (a_{\nu}+r_{\nu})\, \ezs(c^{\nu}) - \ezs(\tilde{c})\,.
\end{eqnarray*}
$k_0$ has the same value for both the $a$- and $b$-summand,
and
\[
\begin{array}{rcl}
\ku{d}& = & \ku{d}^{(a)}-\ku{d}^{(b)}\\ &=&
\sum_{\nu} (a_{\nu}-p_{\nu})\cdot \left(\ku{\tilde{\eta}}^*(c^{\nu})-\ues(c^{\nu}) \right) -
\sum_{\nu} (b_{\nu}-p_{\nu})\cdot \left(\ku{\tilde{\eta}}^*(c^{\nu})-\ues(c^{\nu}) \right) \\
&=&
\sum_{\nu} (a_{\nu}-b_{\nu})\cdot \left(\ku{\tilde{\eta}}^*(c^{\nu})-\ues(c^{\nu}) \right)\,.
\end{array}
\vspace{-3ex}
\]
\end{proof}

\subsection{}\label{obst-6}
Now, we try to approach the obstruction map $\lambda$ from the opposite 
direction.
Using the description of $T^2_Y$ given in (\ref{obst-1}) 
we construct an element
of $T^2_Y\otimes_{\C}W$ that, afterwards, will 
turn out to equal $\lambda$.
\\[1ex]
For $\rho\in \Z^N$ induced from some path along the edges of $Q$, we will
denote
\[
\ku{d}(\rho,c):= [\langle \rho_1\,d^1,\,c\rangle, \dots,
\langle \rho_N\,d^N,\,c\rangle]\in \R^{\m}
\]
the vector showing the behavior of $c\in\lat^*$ passing each particular edge.
If $\rho$ governs the walk between two lattice vertices and is regarded modulo
$t_i-t_j$ (if $d^i,\ d^j$ contain a common non-lattice vertex), then 
$\ku{d}(\rho,c)$
is contained in $\Z^{\m}$. In particular, this property holds for closed paths.
In this case $\ku{d}(\rho,c)$
will be contained in
$V^\bot$.
\\
On the other hand, for each $k\geq 1$, we can use the $\ku{d}$'s from $V^\bot$ 
to get
elements $g_{\ku{d},k}(\ult-t_1)\in W_k$ generating this vector space.
Composing both procedures we obtain, for each closed path $\rho\in \Z^N$, a map
\[
\begin{array}{cccccl}
g^{(k)}(\rho,\bullet):& \A^*&\longrightarrow &V^\bot &\longrightarrow &W_k\\
&c&&\mapsto && g_{\ku{d}(\rho,c), k}(\ult-t_1)\,.
\end{array}
\]

\begin{lemma}
(1) Taking the sum over all compact 2-faces we get a surjective map
\[
\sum_{\keps<Q} g^{(k)}(\ku{\keps},\bullet): \oplus_{\keps<Q}
\,\A^*\otimes_{\R}{\C} \surj W_k\,.
\vspace{-1ex}
\]
(2) Let $c\in\lat^*$ be integral.
If $\rho^1, \rho^2\in\Z^N$ are two paths each connecting vertices
$v,w\in Q$ such that 
\vspace{-2ex}
\begin{itemize}
\item[$\bullet$]
$|\langle v,c\rangle - \langle w,c \rangle | \le k-1\;$ and
\item[$\bullet$]
$c$ is monotone along both paths, i.e.\
$\langle \rho^{1/2}_i \,d^i , \,c \rangle
\ge 0$
for $i=1,\dots,N$,
\end{itemize}
\vspace{-2ex}
then $\rho^1-\rho^2\in \Z^N$ will be a closed path yielding
$g^{(k)}(\rho^1-\rho^2, \,c)=0$ in $W_k$.
\vspace{-2ex}
\end{lemma}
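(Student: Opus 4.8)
The plan is to prove the two parts separately: part (1) by unwinding the definition of $W_k$, and part (2) by Newton's identities together with the vanishing of elementary symmetric functions of too few variables.

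For part (1): since $\kI\subseteq\C[t_i-t_j]$ is a homogeneous ideal with $\kI_0=0$, in every degree $k\ge 2$ the summand $\kI_1\cdot\C[t_i-t_j]$ of $\ktI$ is absorbed into $(t_i-t_j)_{i,j}\cdot\kI$, so that $W_k=\kI_k/\big((t_i-t_j)_{i,j}\cdot\kI\big)_k$, while $W_1=0$. As $\kI$ is generated by the $g_{\ku{d},j}(\ult-t_1)$ with $\ku{d}\in\VQ^\bot\cap\Z^N$ and $j\ge 1$, every element of $\kI_k$ is a finite $\C[t_i-t_j]$-combination of such generators with $j\le k$; the coefficient of one with $j<k$ has positive degree, so that summand lies in $(t_i-t_j)_{i,j}\cdot\kI$. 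Hence $W_k$ is spanned over $\C$ by the classes of the $g_{\ku{d},k}(\ult-t_1)$, and as $\ku{d}\mapsto g_{\ku{d},k}(\ult-t_1)$ is linear we obtain a $\C$-linear surjection $\VQ^\bot_\C\twoheadrightarrow W_k$. I would then feed in the generating set of $\VQ^\bot$ from (\ref{def-higherdeg}): each generator $e_i-e_j$ (for $d^i,d^j$ at a common non-lattice vertex) maps to $(t_i-t_1)^k-(t_j-t_1)^k=(t_i-t_j)\cdot\sum_{a=0}^{k-1}(t_i-t_1)^a(t_j-t_1)^{k-1-a}\in\kI_1\cdot\C[t_i-t_j]\subseteq\ktI$, hence to $0$ in $W_k$; therefore $W_k$ is already spanned by the images of the vectors $\ku{d}(\ku{\keps},c)=[\langle\keps_1 d^1,c\rangle,\dots,\langle\keps_N d^N,c\rangle]$ over compact $2$-faces $\keps<Q$ and $c\in\A^*$. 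Composing the linear map $c\mapsto\ku{d}(\ku{\keps},c)$ with $\VQ^\bot_\C\twoheadrightarrow W_k$ is precisely $g^{(k)}(\ku{\keps},\bullet)$, giving the asserted surjectivity of $\sum_\keps g^{(k)}(\ku{\keps},\bullet)\colon\bigoplus_{\keps<Q}\A^*\otimes_\R\C\to W_k$.

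For part (2): first, $\rho:=\rho^1-\rho^2$ is a closed path, so $\ku{d}(\rho,c)\in\VQ^\bot\cap\Z^N$ — a closed path is a $\Z$-combination of the boundaries $\ku{\keps}$ of compact $2$-faces, hence $\ku{d}(\rho,c)$ is a combination of the $\ku{d}(\ku{\keps},c)$, which span $\VQ^\bot$ by (\ref{def-higherdeg}); integrality is as in (\ref{obst-6}), and altering $\ku{d}(\rho,c)$ by an $e_i-e_j$ is harmless in $W_k$ by part (1). Monotonicity of $c$ along both paths makes $A:=\ku{d}(\rho^1,c)\ge 0$ and $B:=\ku{d}(\rho^2,c)\ge 0$ with $A-B=\ku{d}(\rho,c)$ and $\sum_iA_i=\sum_iB_i=\langle w,c\rangle-\langle v,c\rangle=:k_0$, and the hypothesis is exactly $k_0\le k-1$. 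Viewing $A,B$ as multisets of cardinality $k_0$ of the variables $s_i:=t_i-t_1$, one has $g_{\ku{d}(\rho,c),k}(\ult-t_1)=\sum_i(A_i-B_i)\,s_i^k=p_k(s_A)-p_k(s_B)$, a difference of power sums. The key move is Newton's identity
\[
k\,e_k\ =\ \sum_{j=1}^{k}(-1)^{j-1}\,e_{k-j}\,p_j\,:
\]
solving for $p_k$ and taking the $s_A$-versus-$s_B$ difference expresses $g_{\ku{d}(\rho,c),k}(\ult-t_1)$ as a $\C$-combination of $e_k(s_A)-e_k(s_B)$ and of the terms $e_{k-j}(s_A)p_j(s_A)-e_{k-j}(s_B)p_j(s_B)$, $1\le j\le k-1$. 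Now $e_k(s_A)=e_k(s_B)=0$, since the $k$-th elementary symmetric function of a $k_0$-element multiset with $k_0<k$ vanishes identically; and each remaining term, telescoped as $e_{k-j}(s_A)\cdot\big(p_j(s_A)-p_j(s_B)\big)+\big(e_{k-j}(s_A)-e_{k-j}(s_B)\big)\cdot p_j(s_B)$, lies in $(t_i-t_j)_{i,j}\cdot\kI\subseteq\ktI$: here $p_j(s_A)-p_j(s_B)=g_{\ku{d}(\rho,c),j}(\ult-t_1)\in\kI$, the factor $e_{k-j}(s_A)-e_{k-j}(s_B)$ lies in $\kI$ by the same recursion and induction on the index, and $e_{k-j}(s_A)$ respectively $p_j(s_B)$ vanish at the origin (as $k-j\ge 1$, $j\ge 1$). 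Hence $g^{(k)}(\rho,c)=g_{\ku{d}(\rho,c),k}(\ult-t_1)=0$ in $W_k$. (Alternatively one may invoke the Lemma of (\ref{obst-5}): $\ult^A-\ult^B=\sum_{j=1}^{k_0}c_j\,g_{\ku{d}(\rho,c),j}(\ult-t_1)\,t^{k_0-j}$ in $W\otimes_\C\C[t]$ with $c_j=(-1)^{j-1}/j\ne 0$ and no $g_{\cdot,k}$-term present; multiplying by $t_1^{\,k-k_0}$ and comparing with that Lemma applied to $\ult^{A+(k-k_0)e_1}-\ult^{B+(k-k_0)e_1}$ forces $g_{\ku{d}(\rho,c),k}(\ult-t_1)=0$ in $W_k$.)

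I expect the real content to lie in part (2): turning the arithmetic bound $|\langle v,c\rangle-\langle w,c\rangle|\le k-1$ into a vanishing statement. Monotonicity along \emph{both} paths is exactly what keeps the total multiplicity $k_0=\sum_iA_i=\sum_iB_i$ equal to $\langle w,c\rangle-\langle v,c\rangle$ rather than larger (a non-monotone path could recross $c$-levels and inflate $k_0$), and once $k_0\le k-1$ the vanishing of $e_k$ on a $k_0$-element set is the mechanism that annihilates $g^{(k)}(\rho,c)$ modulo $\ktI$. A subsidiary nuisance is that, when $Q$ has non-lattice vertices, $A=\ku{d}(\rho^1,c)$ and $B=\ku{d}(\rho^2,c)$ need not separately be integral; this is handled by working modulo $\spann\{e_i-e_j\}$ (on which $g^{(k)}$ is well defined by part (1)) and merging the fractional entries within each edge-component to recover honest non-negative integer multiplicities — and one must likewise take care that the step ``$\rho$ closed $\Rightarrow\ku{d}(\rho,c)\in\VQ^\bot$'' genuinely uses the description of $\VQ^\bot$ in (\ref{def-higherdeg}) and not merely the single equation $\sum_i\rho_id^i=0$.
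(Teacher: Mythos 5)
Your proof is correct and follows essentially the same route as the paper's. Part (1) matches the paper exactly: $V^\bot$ is spanned by the $\ku{d}(\ku{\keps},c)$ together with the $e_i-e_j$ for edges sharing a non-lattice vertex, and the latter die in $W_k$ because $t_i-t_j\in\kI_1$; your reduction of $W_k$ to classes of degree-$k$ generators is the (implicit) first step in that argument. For part (2) the paper's actual proof is two lines: it bounds $\sum_id_i^+\le\langle w,c\rangle-\langle v,c\rangle\le k-1$ and then invokes the Corollary that immediately follows, which in turn cites \cite[Prop.~(2.3)]{al3}; that Corollary is precisely the symmetric-function mechanism you reconstruct via Newton's identities and the vanishing $e_k(s_A)=e_k(s_B)=0$. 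Your bound $\sum_i A_i\le k-1$ is marginally weaker than the paper's $\sum_i d_i^+\le k-1$ (since $d_i^+\le A_i$), but both suffice here. The one place where you are materially sketchier than the paper is the non-integrality of $\ku{d}(\rho,c)$: the paper's Corollary explicitly replaces $\ku{d}$ by an integral $\tilde{\ku{d}}\in V^\bot\cap\Z^N$ with $\sum_i\tilde{d}_i^+=\sum_i d_i^+$ via the corrections $\ku{d}_{ij}=e_i-e_j$, and then tracks the resulting extra $g_{\ku{d}_{ij},k}$-terms into $\kI_1\cdot\C[t_i-t_j]$, whereas you flag the issue as a ``subsidiary nuisance'' and sketch merging fractional entries componentwise; that is the right idea, but to make ``$p_j(s_A)-p_j(s_B)\in\kI$'' literally true (so the telescoping stays in $\ktI$) you need the same replacement, since $\kI$ is generated only by $g_{\ku{d},j}$ with $\ku{d}\in V^\bot\cap\Z^N$.
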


\begin{proof}
The reason for (1) is the fact that the elements $\ku{d}(\keps,c)$ ($\keps
<Q\,$ compact 2-face; $c\in \lat^*$) and $e_i-e_j$
(for $d^i,d^j$ containing a common non-lattice vertex) generate $V^\bot$
as a vector space; 
since $t_i-t_j\in\kI_1$ the latter type yields zero
in $W_k$.
\\[0.5ex]
For the proof of (2), we consider $\ku{d}:=\ku{d}(\rho^1-\rho^2,\,c)$.
Since
$d_i =\langle \rho^1_i\,d^i ,\,c\rangle -
\langle \rho^2_i\,d^i ,\,c\rangle$
is the difference of two non-negative integers, we obtain
$d_i ^+ \leq \langle \rho_i^1\,d^i ,\, c\rangle$.
Hence,
\[
\sum_i d_i ^+ \le \sum_i \langle \rho^1_i\,d^i ,\,c\rangle =
\langle w,c\rangle - \langle v,c\rangle \le k-1\,,
\]
and we obtain $g_{\ku{d},k}(\ult-t_1)
\in \ktI$ by the following corollary.
\end{proof}
\begin{corX}
Let $k_0:=\sum\ku{d}(\rho_1-\rho_2,c)^+$. Then 
$g_{\ku{d}(\rho_1-\rho_2,c),k}(\ult-t_1)\in\ktI$ for $k> k_0$.
\end{corX}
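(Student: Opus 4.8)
The plan is to prove the following slightly more general statement, which is all that the preceding Lemma needs: for every $\ku{d}\in\VQ^{\bot}\cap\Z^{N}$, putting $k_{0}:=\sum_{i}d_{i}^{+}$ --- which equals $\sum_{i}d_{i}^{-}$, since $\ku{1}\in\VQ$ forces $\sum_{i}d_{i}=0$ --- one has $g_{\ku{d},k}(\ult-t_{1})\in\ktI$ for all $k>k_{0}$. The Corollary is then the special case $\ku{d}=\ku{d}(\rho_{1}-\rho_{2},c)$, which lies in $\VQ^{\bot}\cap\Z^{N}$ because $\rho_{1}-\rho_{2}$ is a closed path (cf.\ the proof of the preceding Lemma). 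I would work throughout in $\C[t_{i}-t_{j}]$, write $s_{i}:=t_{i}-t_{1}$, and let $\Fm:=(t_{i}-t_{j}\mid i,j)$ be its irrelevant maximal ideal, so that --- by the definition of $\ktI$ in (\ref{obst-2}) --- $\ktI=\Fm\cdot\kI+\C[t_{i}-t_{j}]\cdot\kI_{1}$; I record that $g_{\ku{d},j}(\ult-t_{1})\in\kI$ for every $j\ge 1$, that $g_{\ku{d},1}(\ult-t_{1})\in\kI_{1}$, and that any product of a positive-degree polynomial in the $s_{i}$ with an element of $\kI$ already lies in $\ktI$. The device is to organise $g_{\ku{d},k}$ by symmetric functions: let $x=(x_{1},\dots,x_{k_{0}})$ be the list of the $s_{i}$ in which $s_{i}$ appears $d_{i}^{+}$ times, and $y$ the analogous list with multiplicities $d_{i}^{-}$; then $g_{\ku{d},k}(\ult-t_{1})=\sum_{i}d_{i}s_{i}^{k}=p_{k}(x)-p_{k}(y)$, where $p_{\ell}$ denotes the $\ell$-th power sum, and every $p_{\ell}(x)$ and every elementary symmetric polynomial $e_{\ell}(x)$ with $\ell\ge 1$ --- and likewise for $y$ --- lies in $\Fm$.

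The heart of the argument is the congruence
\[
e_{i}(x)-e_{i}(y)\;\equiv\;\tfrac{(-1)^{i-1}}{i}\,g_{\ku{d},i}(\ult-t_{1})\pmod{\ktI}\qquad(1\le i\le k_{0}).
\]
I would prove it by induction on $i$, using the backward Newton identity $i\,e_{i}=\sum_{j=1}^{i}(-1)^{j-1}e_{i-j}\,p_{j}$ (with $e_{0}=1$) for $x$ and for $y$ and subtracting. In each summand one splits
\[
e_{i-j}(x)\,p_{j}(x)-e_{i-j}(y)\,p_{j}(y)=e_{i-j}(x)\bigl(p_{j}(x)-p_{j}(y)\bigr)+\bigl(e_{i-j}(x)-e_{i-j}(y)\bigr)\,p_{j}(y).
\]
For $j<i$ the first term lies in $\Fm\cdot\kI$ (since $e_{i-j}(x)\in\Fm$ and $p_{j}(x)-p_{j}(y)=g_{\ku{d},j}(\ult-t_{1})\in\kI$), and the second is $\equiv 0\pmod{\ktI}$ by the inductive hypothesis together with $g_{\ku{d},i-j}(\ult-t_{1})\cdot p_{j}(y)\in\kI\cdot\Fm$; only the term $j=i$ survives, contributing $(-1)^{i-1}\bigl(p_{i}(x)-p_{i}(y)\bigr)=(-1)^{i-1}g_{\ku{d},i}(\ult-t_{1})$. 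The base case $i=1$ is just $e_{1}(x)-e_{1}(y)=p_{1}(x)-p_{1}(y)=g_{\ku{d},1}(\ult-t_{1})$. The point worth stressing is that one should \emph{not} expect $e_{i}(x)-e_{i}(y)$ to vanish modulo $\ktI$ when $i\le k_{0}$; what matters is pinning down its class as this explicit scalar multiple of $g_{\ku{d},i}(\ult-t_{1})$.

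Finally, for $k>k_{0}$ I would apply the Newton recursion $p_{k}=\sum_{i=1}^{k_{0}}(-1)^{i-1}e_{i}\,p_{k-i}$ (valid because the lists $x,y$ have exactly $k_{0}$ entries) to $x$ and to $y$, and subtract. Splitting each $e_{i}(x)\,p_{k-i}(x)-e_{i}(y)\,p_{k-i}(y)$ exactly as above: the first piece $e_{i}(x)\bigl(p_{k-i}(x)-p_{k-i}(y)\bigr)=e_{i}(x)\,g_{\ku{d},k-i}(\ult-t_{1})$ lies in $\Fm\cdot\kI$, since $k-i\ge k-k_{0}\ge 1$ makes $g_{\ku{d},k-i}(\ult-t_{1})$ a genuine generator of $\kI$; and the second piece, by the displayed congruence, is congruent modulo $\ktI$ to $\tfrac{(-1)^{i-1}}{i}g_{\ku{d},i}(\ult-t_{1})\cdot p_{k-i}(y)$, which lies in $\kI\cdot\Fm$ because $g_{\ku{d},i}(\ult-t_{1})\in\kI$ and $p_{k-i}(y)\in\Fm$. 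Hence every summand is in $\ktI$, whence $g_{\ku{d},k}(\ult-t_{1})=p_{k}(x)-p_{k}(y)\in\ktI$. The delicate point --- and exactly where the hypothesis $k>k_{0}$ is indispensable --- is this final absorption: the congruence of the middle paragraph leaves behind multiples of the $g_{\ku{d},i}(\ult-t_{1})$ with $i\le k_{0}$, which are not themselves in $\ktI$, and they can be cleared only because, once $k>k_{0}$, each of them appears multiplied by a factor of strictly positive degree (an $e_{i}(x)\in\Fm$ or a power sum $p_{k-i}(y)\in\Fm$). This bookkeeping is the power-sum refinement of the symmetric-polynomial identities of \cite[(3.4)]{gorenstein} that underlie \cite[Lemma~(7.5)]{gorenstein}.
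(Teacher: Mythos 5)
Your argument for the auxiliary claim --- that $g_{\ku{d},k}(\ult-t_1)\in\ktI$ for $k>k_0$ when $\ku{d}\in\VQ^{\bot}\cap\Z^{\m}$ and $k_0=\sum_i d_i^+$ --- is correct and, in fact, a nice self-contained substitute for the citation of \cite[Prop.\ (2.3)]{al3} that the paper uses at this point: your Newton-identity recursion for the power sums $p_k(x)-p_k(y)$ with $|x|=|y|=k_0$ is exactly the mechanism behind that reference and behind \cite[(3.4)]{gorenstein}. But the reduction of the Corollary to that claim is where you go wrong, and the error is precisely what the paper's own proof is built to repair. You assert that $\ku{d}(\rho_1-\rho_2,c)\in\VQ^{\bot}\cap\Z^{\m}$ ``because $\rho_1-\rho_2$ is a closed path,'' but this is a misreading of the preceding Lemma: that Lemma only claims $\ku{d}(\rho,c)$ is integral \emph{when regarded modulo $t_i-t_j$} for $d^i,d^j$ sharing a common non-lattice vertex, i.e.\ that the sum of the coordinates over each ``component'' of edges is an integer. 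The individual coordinates $d_i=\langle\rho_i d^i,c\rangle$ are in general only rational, since the edge vectors $d^i$ of $Q$ are rational (the vertices $v^i=a^i/\langle a^i,R\rangle$ need not lie in $\lat$). Your symmetric-function machine needs the $d_i^+$ to be \emph{non-negative integers} in order to form the multisets $x$ and $y$, so it simply does not apply to $\ku{d}(\rho_1-\rho_2,c)$ directly.

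The missing step --- which is the actual content of the paper's proof --- is the reduction to the integral case. One chooses $\tilde{\ku{d}}\in\VQ^{\bot}\cap\Z^{\m}$ with $\sum_i\tilde{d}_i^+=\sum_i d_i^+=k_0$ by correcting $\ku{d}$ with a fixed $\Q$-linear combination $\sum_{ij}q_{ij}\,\ku{d}_{ij}$ of the vectors $\ku{d}_{ij}=e_i-e_j\in\VQ^{\bot}$ attached to pairs of edges at a common non-lattice vertex (possible exactly because the component sums of $\ku{d}$ are integers; the coefficients $q_{ij}$ are independent of $k$). Then $g_{\ku{d},k}=g_{\tilde{\ku{d}},k}-\sum_{ij}q_{ij}\,g_{\ku{d}_{ij},k}$. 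Your argument disposes of the $g_{\tilde{\ku{d}},k}$ part for $k>k_0$, landing it in $(t_i-t_j)_{ij}\cdot\kI\subseteq\ktI$. For the correction terms one uses that $g_{\ku{d}_{ij},1}=(t_i-t_1)-(t_j-t_1)\in\kI_1$ and $\sum_r(d_{ij})_r^+=1$, so that $g_{\ku{d}_{ij},k}$ factors as $f(\ult-t_1)\cdot g_{\ku{d}_{ij},1}\in\kI_1\cdot\C[t_i-t_j]\subseteq\ktI$. Without this reduction your proof only covers the integral case of the general claim, not the Corollary as stated.
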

\begin{proof}
Consider $\ku{d}\in\VQ^{\bot}\cap\Z^N$.
From \cite{al3} proposition (2.3)
we know that $g_{\ku{d},k}(\ult-t_1)$ can be written as a
$\C[t_i-t_j]$-linear combinations of
$g_{\ku{d},1}(\ult-t_1),\ldots,g_{\ku{d},k_0}(\ult-t_1)$ for $k> k_0$, where
$k_0=\sum_id_i^+$. \\
Now $\ku{d}:=\ku{d}(\rho_1-\rho_2,c)\in\VQ^{\bot}$
does not have to be contained in $\Z^N$.
But since the path $\rho_1-\rho_2$ is closed, $\ku{d}$ yields an integer as sum
on every component. Since 
$\ku{d}_{ij}:=[0,\ldots,0,1_i,0,\ldots,0,-1_j,0,\ldots,0]\in V^{\bot}$
for $d^i,\ d^j$
containing a common non-lattice vertex we are able to find some 
$\tilde{\ku{d}}\in V^{\bot}\cap \Z^N$ with $\sum_i\tilde{d}_i^+=\sum_id_i^+$
such that $g_{\tilde{\ku{d}},k}(\ult-t_1)=g_{\ku{d},k}(\ult-t_1)+
\sum_{ij}q_{ij}\cdot g_{\ku{d}_{ij},k}(\ult-t_1)$, with the usual
assumptions for $i,\, j$.
In particular, the $q_{ij}$ do not depend on $k$.
For the $g_{\tilde{\ku{d}},k}(\ult-t_1)$ the first assumptions apply, and we
obtain for $k> k_0$:
\[
g_{\tilde{\ku{d}},k}(\ult-t_1)=\sum_{n=1}^{k_0}a_n(\ult-t_1)\cdot
g_{\tilde{\ku{d}},n}(\ult-t_1)
\]
and
\[
g_{\ku{d},k}(\ult-t_1)=\sum_{n=1}^{k_0}a_n(\ult-t_1)\cdot
g_{\tilde{\ku{d}},n}(\ult-t_1)-\sum_{ij}q_{ij}
\cdot g_{\ku{d}_{ij},k}(\ult-t_1).
\]
Now we assume w.l.o.g.\ that $a_n(\ult-t_1)$ is homogenous
and has degree $k-n$. Hence, the first sum on the right
hand side is contained in $(t_i-t_j)_{ij}\cdot \kI\subseteq\ktI$. 
Now consider the second sum. We know
$g_{\ku{d}_{ij},1}(\ult-t_1)\in\kI_1$ and $\sum_r(d_{ij}^+)_r=1$. Thus $g_{\ku{d}_{ij},k}(\ult-t_1)=f(\ult-t_1)\cdot 
g_{\ku{d}_{ij},1}(\ult-t_1)$ and this is contained in
$\kI_1\cdot\C[t_i-t_j]\subseteq\ktI$.
\end{proof}

\subsection{}\label{obst-sets}
Recalling the sets $E_j^k$ from (\ref{obst-1}),
we can define the following linear maps:
\[
\begin{array}{cccl}
\psi_j^{(k)}:& L(E_j^{k}) & \longrightarrow & W_k\\
& q & \mapsto & \sum_{\nu} q_{\nu}\cdot
g^{(k)}\left(\ku{\lambda}(v^j )+\ku{\mu}^{\nu}(v^j ) - 
\ku{\lambda}(v(c^{\nu})),\,c^{\nu}\right)\,.
\end{array}
\]
(The $q$-coordinate corresponding to $R\in E_j^{k}$ is not used in the
definition of $\psi_j^{(k)}$.)
\vspace{1ex}

\begin{lemma}
Let $\langle v^j ,v^l  \rangle <Q$ be an edge of the
polyhedron $Q$. Then, on
$L(E_j^{k} \cap E_l^{k}) = L(E_j^{k}) \cap
L(E_l^{k})$, the
maps $\psi_j^{(k)}$ and  $\psi_l^{(k)}$ coincide.
In particular (cf.\ Theorem \ref{obst-1}), the $\psi_j^{(k)}$'s
induce a linear map
$\psi^{(k)}: T^2_Y(-kR)^{*}\to W_k$.
\end{lemma}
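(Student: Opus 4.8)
\emph{Reduction of the ``in particular''.} This part is formal. Assemble the maps into $\Psi^{(k)}:=\sum_j\psi_j^{(k)}\colon\bigoplus_jL(E_j^k)\to W_k$. Once we know $\psi_j^{(k)}=\psi_l^{(k)}$ on $L(E_j^k\cap E_l^k)$ for every edge $\langle v^j,v^l\rangle<Q$, the restriction of $\Psi^{(k)}$ to $\ker\!\big(\bigoplus_jL_{\C}(E_j^k)\to L_{\C}(E)\big)$ annihilates the image of the differential $\bigoplus_{\langle v^i,v^j\rangle<Q}L_{\C}(E_i^k\cap E_j^k)\to\bigoplus_iL_{\C}(E_i^k)$ occurring in the description of $T^2_Y(-kR)$ in (\ref{obst-1}); hence it factors through $T^2_Y(-kR)^{*}$ and yields $\psi^{(k)}$. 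So everything reduces to the coincidence of the $\psi_j^{(k)}$'s on overlaps.

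\emph{Reduction to a path computation.} Fix $\langle v^j,v^l\rangle<Q$ and $q\in L(E_j^k\cap E_l^k)$; then $\sum_\nu q_\nu c^\nu=0$, and the $R$-coordinate of $q$ is irrelevant for $\psi^{(k)}_\bullet$. In $\psi_j^{(k)}(q)-\psi_l^{(k)}(q)$ the $\ku{\lambda}(v(c^\nu))$-terms cancel, and since $(\rho,c)\mapsto g^{(k)}(\rho,c)=g_{\ku{d}(\rho,c),k}(\ult-t_1)$ is bilinear in the closed path $\rho$ and in $c$, one gets
\[
\psi_j^{(k)}(q)-\psi_l^{(k)}(q)=\sum_\nu q_\nu\,g^{(k)}(\rho^\nu,c^\nu),\qquad
\rho^\nu=\big(\ku{\lambda}(v^j)-\ku{\lambda}(v^l)\big)+\big(\ku{\mu}^\nu(v^j)-\ku{\mu}^\nu(v^l)\big),
\]
a closed path --- the quadrilateral $0\to v^j\to v(c^\nu)\to v^l\to 0$. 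Using the one-edge path $\delta$ along $\langle v^j,v^l\rangle$, split $\rho^\nu=\tau+\sigma^\nu$ into two \emph{closed} triangles $\tau:=\ku{\lambda}(v^j)+\delta-\ku{\lambda}(v^l)$ (the triangle $0\to v^j\to v^l\to 0$, not depending on $\nu$) and $\sigma^\nu:=\ku{\mu}^\nu(v^j)-\ku{\mu}^\nu(v^l)-\delta$ (the triangle $v^j\to v(c^\nu)\to v^l\to v^j$); then $g^{(k)}(\rho^\nu,c^\nu)=g^{(k)}(\tau,c^\nu)+g^{(k)}(\sigma^\nu,c^\nu)$ with each summand genuinely in $W_k$.

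\emph{Killing both parts.} For the $\tau$-part, summation and bilinearity give $\sum_\nu q_\nu g^{(k)}(\tau,c^\nu)=g^{(k)}\big(\tau,\sum_\nu q_\nu c^\nu\big)=g^{(k)}(\tau,0)=0$. For the $\sigma^\nu$-part I show every term already vanishes in $W_k$ by Lemma (\ref{obst-6})(2). Exchanging $j$ and $l$ if necessary, assume $\langle v^j,c^\nu\rangle\le\langle v^l,c^\nu\rangle$. Then $-\ku{\mu}^\nu(v^l)$ and $\delta-\ku{\mu}^\nu(v^j)$ are two paths from $v(c^\nu)$ to $v^l$ along which $c^\nu$ is weakly increasing (for the $\ku{\mu}$-parts by their defining monotonicity, for $\delta$ by the chosen ordering of $j,l$), and their difference is $\sigma^\nu$. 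Since $v(c^\nu)$ minimizes $c^\nu$ on $Q$ with $\langle v(c^\nu),c^\nu\rangle=-\eta_0(c^\nu)$, and since the hypothesis $[c^\nu,\ezs(c^\nu)]\in E_l^k$ reads (the level-$k$ analogue of the reformulation of $E_j$ in (\ref{kodSpencE})) $\langle v^l,c^\nu\rangle+\ezs(c^\nu)<k$, the chain $\ku{d}:=\ku{d}(\sigma^\nu,c^\nu)$ satisfies
\[
\sum_i d_i^{+}\;\le\;\langle v^l,c^\nu\rangle-\langle v(c^\nu),c^\nu\rangle\;=\;\langle v^l,c^\nu\rangle+\eta_0(c^\nu)\;\le\;\langle v^l,c^\nu\rangle+\ezs(c^\nu)\;<\;k\,,
\]
which is exactly the length bound needed to invoke Lemma (\ref{obst-6})(2) (via its corollary). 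Hence $g^{(k)}(\sigma^\nu,c^\nu)=0$, so $\psi_j^{(k)}(q)=\psi_l^{(k)}(q)$.

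\emph{Expected main obstacle.} The delicate case is when the vertex whose $E^k$-set is used (here $v^l$) is not in $\lat$: then $\langle v^l,c^\nu\rangle$ need not be an integer, so $\ku{d}(\sigma^\nu,c^\nu)$ has fractional entries and the inequality $\sum_id_i^{+}<k$ must still be seen to give the \emph{integral} statement $k>k_0$ required in the corollary to Lemma (\ref{obst-6})(2) (and one must also pin down exactly how membership in $E_l^k$ produces the first estimate above). I expect to handle this precisely by the device already used there: the non-lattice vertices of $Q$ lie in a single component of edges (cf.\ (\ref{def-eps})), $\ku{d}(\sigma^\nu,c^\nu)$ has integral component-sums, so it may be replaced by an integral $\tilde{\ku{d}}\in\VQ^{\bot}\cap\Z^N$ with the same positive-part sum, while the $e[v(c^\nu)]$-type corrections that would otherwise break integrality cancel around the closed triangle $\sigma^\nu$. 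Granting this bookkeeping, the computation above carries through and produces the induced map $\psi^{(k)}\colon T^2_Y(-kR)^{*}\to W_k$.
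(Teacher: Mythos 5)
Your proposal is correct and reconstructs essentially the argument the paper has in mind --- the paper merely cites Lemma~(7.6) of \cite{gorenstein} here, and your decomposition of $\rho^\nu$ into the $\nu$-independent triangle $\tau$ (killed by bilinearity and $\sum_\nu q_\nu c^\nu=0$) plus the triangle $\sigma^\nu$ (killed by Lemma~\ref{obst-6}(2) via its corollary) is exactly that argument adapted to the present setting. Two small points worth noting. First, you correctly pass through the corollary to Lemma~\ref{obst-6}(2) rather than the lemma itself: the chain $\sum_i d_i^+\le\langle v^l,c^\nu\rangle+\eta_0(c^\nu)\le\langle v^l,c^\nu\rangle+\ezs(c^\nu)<k$ gives $k>k_0:=\sum_id_i^+$ with $k$ integral, which is precisely the corollary's hypothesis and sidesteps the ``$\le k-1$ versus $<k$'' issue when $v^l\notin\lat$. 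Second, the ``expected main obstacle'' you flag --- fractional entries of $\ku{d}(\sigma^\nu,c^\nu)$ at non-lattice vertices --- is indeed the new phenomenon relative to the Gorenstein case, and the paper has already built the needed device into the corollary of~\ref{obst-6} (the replacement $\ku{d}\rightsquigarrow\tilde{\ku{d}}\in V^\bot\cap\Z^N$ using the relations $\ku{d}_{ij}$), so your invocation of that corollary already absorbs the bookkeeping you describe; no separate argument is needed beyond what you wrote.
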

\begin{proof}
The proof is similar to the proof of Lemma (7.6) in \cite{gorenstein}.
\end{proof}

Now, both ends will meet and we obtain an explicit description of
the obstruction map:

\begin{proposition} \label{obst-7}
$\;\sum_{k\ge 1} c_k\,\psi^{(k)}$ equals $\lambda^*$,
the adjoint of the obstruction map.
\vspace{-2ex}
\end{proposition}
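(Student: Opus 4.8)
The plan is to verify the asserted identity of graded linear maps $(T^2_Y)^*\to W$ one homogeneous degree at a time. Fix $k\ge 1$. By the description of $T^2_Y$ recalled in (\ref{obst-1}) (due to \cite{al3}), the dual $T^2_Y(-kR)^*$ is the homology group
\[
H_k:=\ker\Big(\bigoplus_j L_{\C}(E_j^k)\to L_{\C}(E)\Big)\Big/\im\Big(\bigoplus_{\langle v^i,v^j\rangle<Q}L_{\C}(E_i^k\cap E_j^k)\to\bigoplus_j L_{\C}(E_j^k)\Big),
\]
an element of which is represented by a cycle $(q^j)_j$ with $q^j\in L(E_j^k)$ and $\sum_j q^j=0$ in $L(E)$. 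Since $q^j$ is a $\Z$-linear relation among elements of $E_j^k\subseteq\sigv\cap M$ and $R=[\ku 0,1]$ has vanishing $c$-component, this forces $\sum_\nu q^j_\nu\,c^\nu=0$ for each $j$, while the cycle condition forces $\sum_j q^j_\nu=0$ for each $\nu$. Regarding $\lambda$ and $\lambda^*$ as the same element of $T^2_Y\otimes W$ with the two factors interchanged, I will evaluate both sides of the proposition on such a class and compare the resulting elements of $W_k$.

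On the $\psi$-side, using that $\ku d\mapsto g_{\ku d,k}(\ult-t_1)$ is linear,
\[
c_k\,\psi^{(k)}\!\left([(q^j)]\right)=c_k\sum_j\psi^{(k)}_j(q^j)=c_k\,g_{D,k}(\ult-t_1),\quad D:=\sum_{j,\nu}q^j_\nu\,\ku d\!\left(\ku\lambda(v^j)+\ku\mu^\nu(v^j)-\ku\lambda(v(c^\nu)),\,c^\nu\right).
\]
Writing the $i$-th coordinate as $D_i=\sum_{j,\nu}q^j_\nu\big(\lambda(v^j)_i+\mu^\nu(v^j)_i-\lambda(v(c^\nu))_i\big)\langle d^i,c^\nu\rangle$, the $\lambda(v^j)$-term vanishes because $\sum_\nu q^j_\nu\langle d^i,c^\nu\rangle=\langle d^i,\sum_\nu q^j_\nu c^\nu\rangle=0$, and the $\lambda(v(c^\nu))$-term vanishes because $\sum_j q^j_\nu=0$; hence $D_i=\sum_{j,\nu}q^j_\nu\langle\mu^\nu(v^j)_i\,d^i,\,c^\nu\rangle$. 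On the $\lambda$-side, one first translates the cycle $(q^j)_j$ into an honest $\C[\ku z,t]$-relation $\bar s$ among the equations $f_{(a,b,\alpha,\beta)}$ representing this homology class -- this is the reverse of the isomorphism of \cite{al3}: splitting each $q^j$ into its positive and negative parts and telescoping along the edges $\langle v^i,v^j\rangle<Q$ exhibits $\bar s$ as a $\C[\ku z,t]$-combination of relations of the types listed in (\ref{flat-three-Rel}) together with Koszul relations. As $\lambda$ annihilates Koszul relations and, by (\ref{obst-threeRel}), also the relations of types (i) and (ii), only the type-(iii) constituents contribute, and each contributes the expression computed in (\ref{obst-5}).

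The heart of the argument is to see that these contributions add up to $c_k\,g_{D,k}(\ult-t_1)$. For the type-(iii) constituent attached to the index $j$ via $q^j$, the element $\tilde c$ of (\ref{obst-threeRel})--(\ref{obst-5}) is a non-negative combination of those $c^\nu$ occurring in $q^j$, each of which cuts out a face of $Q$ through $v^j$; hence $\tilde c$ lies in the normal cone of $Q$ at $v^j$, so $v(\tilde c)=v^j$. With the paths fixed once and for all in (\ref{obst-3}) this gives
\[
\ku{\tilde\eta}^*(c^\nu)-\ues(c^\nu)=-\,\ku d\!\left(\ku\lambda(v^j)+\ku\mu^\nu(v^j)-\ku\lambda(v(c^\nu)),\,c^\nu\right),
\]
the $e[v(c^\nu)]$-terms cancelling as already observed in (\ref{obst-5}). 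Substituting this into the formula of (\ref{obst-5}), using that $s\mapsto\lambda(s)$ is $\C[\ku z,t]$-linear so that the monomial prefactors of the various constituents all combine to one and the same monomial of $A(Y)$ -- the canonical degree-$(-kR)$ generator against which $H_k$ is paired -- and extracting the degree-$k$ part of $W$, one obtains exactly $\lambda^*([(q^j)])=c_k\,g_{D,k}(\ult-t_1)$ with the same $D$ as above; the sign works out because the positive/negative split of $q^j$ produces the type-(iii) relation with $b-a=q^j$, and $(a_\nu-b_\nu)\cdot(-\ku d(\cdots,c^\nu))=q^j_\nu\,\ku d(\cdots,c^\nu)$. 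This is the claimed equality.

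The main obstacle is precisely the bookkeeping just indicated: making fully explicit the passage from a homology cycle $(q^j)_j$ to the concrete relation $\bar s$ -- essentially re-deriving the relevant half of the $T^2_Y$-computation of \cite{al3} -- and verifying $v(\tilde c)=v^j$; everything after that is linear algebra. Along the way one should invoke the Lemma and Corollary of (\ref{obst-6}) to see that the unavoidable choices -- the auxiliary monotone paths $\ku\mu^\nu$, the fixed representations $\ku p^c$, and any re-routing of walks between lattice vertices -- change $\lambda(s)$ only by elements of $\ktI$, so that the resulting class in $W_k$ is well defined, genuinely lies in $W_k$ (and not in a lower graded piece), and is unaffected by the terms with $k>k_0$. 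As throughout the paper, it is the smoothness of the two-dimensional faces of $\sigma$ -- equivalently, that $Y$ is smooth in codimension two -- that makes the combinatorial descriptions of $T^2_Y$, hence this whole comparison, available.
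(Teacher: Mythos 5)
Your overall plan coincides with the paper's: work degree by degree, use the combinatorial cycle description of $T^2_Y(-kR)^*$, observe that $\lambda$ kills Koszul relations and relations of types (i) and (ii), and then match the type-(iii) contributions against $c_k\,\psi^{(k)}$. Your computation on the $\psi$-side (the vanishing of the $\ku\lambda(v^j)$- and $\ku\lambda(v(c^\nu))$-terms in $D$) is also correct.

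The gap is in the step you yourself flag as "the heart of the argument", namely the claim that $v(\tilde c)=v^j$ because every $c^\nu$ appearing in $q^j$ "cuts out a face of $Q$ through $v^j$". That description of $E^k_j$ is only valid for $k=1$. For $k\ge 2$ the defining inequality of $E^k_j$ is $\langle a^j,c^\nu\rangle+\ezs(c^\nu)<k$, which permits $c^\nu$ whose supporting hyperplanes miss $v^j$ by lattice distance up to $k-1$; a nonnegative combination $\tilde c$ of such $c^\nu$ has no reason to lie in the normal cone of $Q$ at $v^j$, so $v(\tilde c)\ne v^j$ in general. Consequently the displayed identity $\ku{\tilde\eta}^*(c^\nu)-\ues(c^\nu)=-\,\ku d(\ku\lambda(v^j)+\ku\mu^\nu(v^j)-\ku\lambda(v(c^\nu)),c^\nu)$ does not follow, and the proposed matching of the two sides breaks down. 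The paper avoids ever equating $v(\tilde c)$ with some $v^j$: it starts from the cochain-level formula for $\psi^{(k)}$ given by \cite[Thm.~3.5]{al3}, rewrites the selection condition on $j$ via the minimizers $v(c)$ and $v(\tilde c)$, and then \emph{adds an explicit coboundary} $h$ so that the resulting representative is the difference $\psi^{(k)}_{v(c)}(a-b)-\psi^{(k)}_{v(\tilde c)}(a-b)$. Telescoping the paths $\ku\lambda^\nu(v(c))$ and $\ku\lambda^\nu(v(\tilde c))$ then identifies this difference with $g_{\ku d,k}(\ult-t_1)$ for precisely the $\ku d$ of Corollary~(\ref{obst-5}), and the coboundary simultaneously takes care of the monomial prefactors. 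Without this coboundary modification your two sides are compared in incompatible gauges, which is why you were forced into the (false) claim $v(\tilde c)=v^j$.

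A second, milder, gap is your appeal to "re-deriving the relevant half of the $T^2_Y$-computation of \cite{al3}" to pass from a homology cycle $(q^j)_j$ to a concrete relation $\bar s$. The paper uses the forward direction of \cite[Thm.~3.5]{al3}, which already provides the explicit cochain representative, so no such re-derivation is needed. If you want to keep your "evaluate on cycles" viewpoint, you should at minimum import that theorem verbatim and then incorporate the coboundary; otherwise the class you compute on the $\lambda$-side is only well defined modulo coboundaries, which is exactly the ambiguity that invalidates the $v(\tilde c)=v^j$ shortcut.
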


\begin{proof}
Using Theorem 3.5 of \cite{al3}, we can
find an element of 
$\Hom(^{\displaystyle \RI}\!/\!
_{\displaystyle {\RI}_0},\, W_k\otimes A(Y))$
representing $\psi^{(k)}\in T^2_Y\otimes W_k$ --
it sends relations of type (i), cf.\ (\ref{flat-three-Rel}),
to 0 and deals with 
relations of type (ii) and (iii) in the following way:
\[
[\ku{z}^r\, t^{\gamma}\cdot f_{(a,b,\alpha,\beta)} -
f_{(a+r,b+r,\alpha+\gamma,\beta+\gamma)}=0]  \mapsto
\psi_j^{(k)}(a-b)\cdot x^{
\sum_{\nu} (a_{\nu}+r_{\nu})[c^{\nu},\ezs(c^{\nu})]+(\alpha+\gamma-k)R}\,,
\]
if
\[
\langle (Q,1),\, \sum_{\nu} (a_{\nu}+r_{\nu})\, [c^{\nu},\ezs(c^{\nu})] + (\alpha + \gamma -k)
R\rangle \geq 0\,,
\]
and $j$ is such that
\[
\langle (v^j ,1),\, \sum_{\nu} a_{\nu} \,[c^{\nu},\ezs(c^{\nu})] +
(\alpha -k)R\rangle <0\,;
\]
otherwise the relation is sent to $0$ 
(in particular, if there is not any $j$
meeting the desired condition).
\\[1ex]
On $Q$, the linear forms $c:=\sum_{\nu}a_{\nu}\,c^{\nu}$ and
$\tilde{c}=\sum_{\nu}(a_{\nu}+r_{\nu})c^{\nu}$ admit their minimal values at the vertices
$v(c)$ and $v(\tilde{c})$, respectively. Hence, we can transform the previous 
formula into
\[
\begin{array}{l}
[\ku{z}^r\, t^{\gamma}\cdot f_{(a,b,\alpha,\beta)} -
f_{(a+r,b+r,\alpha+\gamma,\beta+\gamma)}=0]  \mapsto
\psi_{v(c)}^{(k)}(a-b)\cdot x^{\sum_{\nu} (a_{\nu}+r_{\nu})[c^{\nu},\ezs(c^{\nu})]+
(\alpha+\gamma-k)R}
\\[1.7ex]
\begin{array}[t]{ll}
\mbox{if } &
\begin{array}[t]{l}
\sum_{\nu}(a_{\nu}+r_{\nu})\ezs(c^{\nu})-\ezs(\tilde{c})+(\alpha+\gamma-k) =
\\[0.5ex]
\qquad =
\langle (v(\tilde{c}),1),\, \sum_{\nu} (a_{\nu}+r_{\nu})\, [c^{\nu},\ezs(c^{\nu})] +
(\alpha + \gamma -k)
R\rangle \ge 0\,,
\end{array}
\vspace{1ex}
\\ &
\begin{array}[t]{l}
\sum_{\nu}a_{\nu}\,\ezs(c^{\nu})-\ezs(c)+(\alpha-k) =\\[0.5ex]
\qquad =
\langle (v(c),1),\, \sum_{\nu} a_{\nu} \,[c^{\nu},\ezs(c^{\nu})] +
(\alpha -k)R\rangle <0
\end{array}
\end{array}
\end{array}
\]
and mapping onto 0 otherwise.
\\[1ex]
Adding the coboundary $h\in \Hom\,(\C[\ku{z},t]^m,\, W_k\otimes A(Y))$
\[
h_{(a,\alpha), (b,\beta)}:=
\left\{ \begin{array}{ll}
\psi^{(k)}_{v(c)}(a-b)\cdot x^{\sum_{\nu} a_{\nu} [c^{\nu},\ezs(c^{\nu})] +(\alpha -k)R} &
\mbox{for } \sum_{\nu} a_{\nu}\,\ezs(c^{\nu})-\ezs(c)+\alpha\ge k\,,\\
0 & \mbox{otherwise}
\end{array} \right.
\]
does not change the class in $T^2_Y(-kR)\otimes W_k$
(still representing $\psi^{(k)}$), but
improves the representative from
$\Hom(^{\kd \RI}\!/\!_{\kd {\RI}_0},\, W_k\otimes A(Y))$.
It still maps type-(i)-relations to 0, and moreover
\[
\begin{array}{l}
[\ku{z}^r\, t^{\gamma}\cdot f_{(a,b,\alpha,\beta)} -
f_{(a+r,b+r,\alpha+\gamma,\beta+\gamma)}=0]  \mapsto
\vspace{1ex}\\
\quad\mapsto \left\{ \begin{array}{ll}
\left(\psi_{v(c)}^{(k)}(a-b)-
\psi_{v(\tilde{c})}^{(k)}(a-b)\right)
\cdot x^{\sum_{\nu} (a_{\nu}+r_{\nu})[c^{\nu},\ezs(c^{\nu})]+
(\alpha+\gamma-k)R} &
\mbox{for }
k_0+\gamma\ge k\\
0 & \mbox{otherwise}\,
\end{array}
\right.
\end{array}
\]
with $k_0=\alpha + \sum_{\nu}(a_{\nu}+r_{\nu})\, 
\ezs(c^{\nu})-\ezs(\tilde{c})$.
By definition of $\psi^{(k)}_j$ and $g^{(k)}$ we obtain
\[
\begin{array}{l}
\psi_{v(c)}^{(k)}(a-b)- \psi_{v(\tilde{c})}^{(k)}(a-b)\, =
\vspace{0.5ex}\\
\qquad=\,
\sum_{\nu} (a_{\nu}-b_{\nu})\cdot g^{(k)}\left(
\ku{\lambda}(v(c)) + \ku{\mu}^{\nu}(v(c)) -
\ku{\lambda}(v(\tilde{c}) - \ku{\mu}^{\nu}(v(\tilde{c}),\;c^{\nu}\right)
\vspace{0.5ex}\\
\qquad=\,
\sum_{\nu} (a_{\nu}-b_{\nu})\cdot g^{(k)}\left(
\ku{\lambda}^{\nu}(v(c)) -
\ku{\lambda}^{\nu}(v(\tilde{c}) ,\,c^{\nu}\right)
\vspace{0.5ex}\\
\qquad=\,
g_{\ku{d},\,k}(\ult-t_1) \quad
\mbox{ with }
\begin{array}[t]{rcl}
\ku{d} &=&
\sum_{\nu}(a_{\nu}-b_{\nu})\cdot \ku{d} \left(
\ku{\lambda}^{\nu}(v(c)) -
\ku{\lambda}^{\nu}(v(\tilde{c}) ,\,c^{\nu}\right)\\
&=& \sum_{\nu} (a_{\nu}-b_{\nu})\cdot \left( \tilde{\ku{\eta}}(c^{\nu})-\ku{\eta}(c^{\nu})
\right)\\
&=& \sum_{\nu} (a_{\nu}-b_{\nu})\cdot \left( \ku{\tilde{\eta}}^*(c^{\nu})-\ues(c^{\nu})
\right)\,,
\end{array}
\end{array}
\]
and this completes our proof. Indeed,
for relations of type (ii)
(i.e. $r=0$; $\gamma=1$) we know $c=\tilde{c}$, hence, those relations map
onto 0.
For relations of type (iii) (i.e.\ $\gamma=0$) we can compare the previous
formula with the result obtained in Corollary~\ref{obst-5}:
The coefficients coincide, and
the monomial
$\ku{z}^q\,t^{k_0-k}\in \C[\ku{z},t]$ maps onto
$x^{\sum_{\nu} (a_{\nu}+r_{\nu})[c^{\nu},\ezs(c^{\nu})]+
(\alpha+\gamma-k)R}\in A(Y)$.
\vspace{-3ex}
\end{proof}

\subsection{}\label{obst-8}
It remains to show that the summands $\psi^{(k)}$ of $\lambda^*$ are
indeed surjective maps from $T^2_Y(-kR)^*$ to $W_k$. We will do
so by composing them with auxiliary surjective maps
$p^k: \oplus_{\keps<Q} \A^*\otimes_{\R}\C \surj T^2_Y(-kR)^*$
yielding
$\psi^{(k)}\circ p^k = \sum_{\keps<Q} g^{(k)}(\ku{\keps},\bullet)$. Then the 
result follows from the first part of Lemma \ref{obst-6}.
\\[1ex]
Let us fix some 2-face $\keps<Q$. Assume that $d^1,\dots,d^m$ are
its counterclockwise oriented edges, i.e.\ the sign vector $\ku{\keps}$ looks
like $\keps_i =1$ for $i=1,\dots,m$ and $\keps_j =0$ otherwise.
Moreover,
we denote the vertices of $\keps<Q$ by $v^1,\dots,v^m$ such that $d^i $
runs from $v^i $ to $v^{i+1}$ ($m+1:=1$).
\\[1ex]
Now $p^k$ maps $[c,z]\in M$ to the linear relation
\[
\sum_{i=1}^{m}\sum_{\nu}
(q_{i,\nu}-q_{i-1,\nu})
\cdot[c^{\nu},\ezs(c^{\nu})]+(q_i-q_{i-1})\cdot[\ku{0},1]=0,
\]
where
\[
[c,z]=\sum_{\nu}q_{i,\nu}[c^{\nu},\ezs(c^{\nu})]+q_i[\ku{0},1]
\]
with
$[c^{\nu},\ezs(c^{\nu})]\in E^k_i\cap E^k_{i+1}$ for every $q_{i,\nu}\not= 0$.
This relation is automatically contained in $\ker(\bigoplus_i L(E_i^k)\to
L(E))$.
Note that only the $c\in\lat^*$ is important; choosing another
$z$ will not change the differences $q_i-q_{i-1}$. A closer look at the
construction and the surjectivity can be taken in \cite{al3} sect. 6.
Finally, we apply $\psi^{(k)}$ to obtain
\[
\begin{array}{rcl}
\psi^{(k)}(p^k(c)) &=&
\sum_{i=1}^m \sum_{\nu} (q_{i,v}-q_{i-1,v})\cdot
g^{(k)} \left(
\ku{\lambda}(v^i )-\ku{\lambda}(v(c^{\nu})) + \ku{\mu}^{\nu}(v^i ),\;c^{\nu}\right)
\vspace{1ex}\\
&=& \! \begin{array}[t]{l}
\sum_{i,v} g^{(k)} \left(
\ku{\lambda}(v^i )-\ku{\lambda}(v(c^{\nu})) + \ku{\mu}^{\nu}(v^i ),\;
q_{i,v}\,c^{\nu}\right)\,-
\vspace{0.5ex}\\
\qquad -\,
\sum_{i,v} g^{(k)} \left(
\ku{\lambda}(v^{i+1})-\ku{\lambda}(v(c^{\nu})) + \ku{\mu}^{\nu}(v^{i+1}),\;
q_{i,v}\,c^{\nu}\right)
\vspace{1ex}
\end{array}\\
&=&
\sum_{i,v} g^{(k)} \left(
\ku{\lambda}(v^i ) -\ku{\lambda}(v^{i+1})
+ \ku{\mu}^{\nu}(v^i )- \ku{\mu}^{\nu}(v^{i+1}),\;
q_{i,v}\,c^{\nu}\right)\,.
\end{array}
\vspace{-1ex}%
\]

We introduce the path $\rho^i$
consisting of the single edge $d^i $ only. Then, if $q_{i v}\not= 0$
and w.l.o.g. $\langle v^i ,c^{\nu}\rangle \ge \langle v^{i+1},c^{\nu}\rangle$,
the pair of paths $\ku{\mu}^{\nu}(v^i )$ and $\ku{\mu}^{\nu}(v^{i+1})+\rho^i$
meets the assumption of Lemma \ref{obst-6}(2) (cf.\ (i)).
Hence, we can proceed as follows:
\[
\begin{array}{rcl}
\psi^{(k)}(p^k(c)) &=& \!
\begin{array}[t]{l}
\sum_{i,v} g^{(k)} \left(
\ku{\lambda}(v^i ) -\ku{\lambda}(v^{i+1}) + \rho^i,\,q_{i v}\,c^{\nu} \right)
\,+
\vspace{0.5ex}\\
\qquad\qquad\qquad +\,\sum_{i,v} g^{(k)} \left(
\ku{\mu}^{\nu}(v^i )- \ku{\mu}^{\nu}(v^{i+1}) - \rho^i ,\,q_{i v}\,c^{\nu} \right)
\end{array}
\vspace{1ex}\\
&=&
\sum_{i=1}^m g^{(k)}\left(
\ku{\lambda}(v^i ) -\ku{\lambda}(v^{i+1}) + \rho^i,\,
\sum_{\nu}q_{i v}\,c^{\nu} \right)
\vspace{1ex}\\
&=&
\sum_{i=1}^m g^{(k)}\left(
\ku{\lambda}(v^i ) -\ku{\lambda}(v^{i+1}) + \rho^i,\, c \right)
\vspace{1ex}\\
&=&
g^{(k)}\left( \sum_{i=1}^m \rho^i,\,c \right)
\vspace{1ex}\\
&=&
g^{(k)}(\ku{\keps},\,c) \,.
\vspace{0.1ex}
\end{array}
\]

Thus, Theorem \ref{obst-them} is proven.

\section{Example}
First let us provide a theorem to describe the situation for $\dim\sigma=3$.
We assume $\sigma$ is smooth in codimension two. Hence, it has an isolated
singularity and $\dim T_Y^1<\infty$, i.e.\ there are only finitely
many $R\in\sigv\cap M$ with $\dim(V(Q)/\ku{1})\not=0$. The second part of
the following theorem provides a combinatorial verification for this fact.
\begin{theorem} \label{3dim}
Let $\sigma\subset\R^3$ be a three dimensional cone
with smooth two dimensional faces.
\vspace{-1ex}
\begin{itemize}
\item[{\rm(i)}] 
Let $R\in\interior(\sigv\cap M)$. We define \[Q:=\sigma\cap [R=1] 
\mbox{ and }Q':=\conv(\mbox{\rm lattice vertices of }Q).\] Define
$\sigma':=\Cone{Q'}$. Then we denote by $Y':=\toric{\sigma'}$ the associated Gorenstein
singularity. If the edge vectors of $Q'$ are primitive (i.e. $\sigma'$ has
smooth two dimensional faces), then $Y'$ has the same deformation theory in
degree $R^*$ as $Y$ in degree $R$.
\vspace{1ex}
\item[\rm{(ii)}] 
There are only finitely many $R\in\sigv\cap M$ such that
$\dim(V(Q)/\ku{1})\not=0$.
\end{itemize}
\end{theorem}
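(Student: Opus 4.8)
I would treat the two parts separately.

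\emph{Part (i).} First reduce to $R$ primitive: otherwise no generator $a^i$ satisfies $\langle a^i,R\rangle=1$, so $Q$ has no $\lat$-vertex, $Q'$ is empty, and there is nothing to prove. Since $R\in\interior\sigv$ we have $\Qinf=0$, so $Q$ is a compact polygon whose $\lat$-vertices are exactly the $a^i$ with $\langle a^i,R\rangle=1$; by (\ref{def-eps}) these cut the boundary cycle of $Q$ into arcs which are precisely the components of edges. The next, and central, step is to observe that the hypothesis ``edge vectors of $Q'$ primitive'' yields a bijection between the components of $Q$ and the edges of $Q'$, the edge vector of $Q'$ being the sum of the oriented compact edge vectors along the corresponding component: primitivity forbids three consecutive $\lat$-vertices of $Q$ from being collinear, so no $\lat$-vertex of $Q$ falls into the relative interior of an edge of $Q'$, and consecutive $\lat$-vertices of $Q$ stay consecutive on $Q'$. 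Summing the coordinates $t_i$ along components then identifies $\VQ(Q)$ with $\VQ(Q')$; carrying the lattice $\lat$ and the identifications $t_i=t_j$ through Theorem (\ref{def-4}) identifies the ideals $\kI\cap\C[t_i-t_j]$, and similarly matches the tautological cone data, so the base scheme $\bar{\CM}$ built from $Q$ equals the one built from $Q'$. Finally $Q'\subseteq[R=1]$ is a lattice polytope whose cone $\sigma'=\Cone(Q')$ spans $N_\R$, so the volume degree of $Y'$ is $R^*=R$, and Theorem (\ref{obst-mainthm}) applied to $Q'$ produces the versal deformation of $Y'$ in degree $-R^*$ from exactly this data; hence the degree-$(-R)$ deformation theory of $Y$ and the degree-$(-R^*)$ deformation theory of $Y'$ coincide. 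The delicate part is the bookkeeping in the middle step, and it is there that primitivity of the edge vectors of $Q'$ is indispensable—without it two components of $Q$ can fuse into a single edge of $Q'$.

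\emph{Part (ii).} Again I would first reduce to primitive $R$: if $R=kR_0$ with $k\ge 2$, then $\langle a^i,R\rangle$ is $0$ or $\ge k\ge 2$ for every $i$, so $Q$ has no $\lat$-vertex, its compact edges form a single component, and $\VQ(Q)=\R\cdot\ku1$. So assume $R$ primitive with $\dim(\VQ(Q)/\ku1)\neq 0$; as $\dim\sigma=3$, the polyhedron $Q$ is $2$-dimensional (the lower-dimensional cases being trivial) and has a single $2$-face equation $\sum_\alpha s_\alpha D^\alpha=0$ in $\lat_\R\cong\R^2$ (where $D^\alpha$ is the direction-sum of the $\alpha$-th component), with $\ku1$ in its kernel. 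If $Q$ is compact, this equation collapses $\VQ(Q)$ to $\R\cdot\ku1$ unless the number of components—equal to the number of $\lat$-vertices of $Q$—is at least $4$. Those $\ge 4$ lattice vertices are genuine vertices of the convex polygon $Q\subseteq[R=1]$, so no three are collinear and any four affinely span $[R=1]$; hence the four equations $\langle a^i,R\rangle=1$ over such a quadruple determine $R$ uniquely, and with finitely many quadruples of generators of $\sigma$ only finitely many $R$ arise.

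If instead $Q$ is unbounded, there is no compact $2$-face, the compact edges form a connected path, $\dim(\VQ(Q)/\ku1)$ equals the number of components minus one, and its non-vanishing forces a $\lat$-vertex $v^i=a^i$ (so $\langle a^i,R\rangle=1$) strictly interior to that path. Fix a generator $a^k$ with $\langle a^k,R\rangle=0$. An interior path vertex lies on no unbounded edge of $Q$, and every unbounded edge is the slice of a $2$-face of $\sigma$ spanned by a ray of $\Qinf$ and one further generator; hence $a^i$ and $a^k$ span no common $2$-face, so the smallest face of $\sigma$ containing both is $\sigma$ itself. Consequently $\sigv\cap(a^i)^\bot\cap(a^k)^\bot=\{0\}$: a nonzero $m\in\sigv$ vanishing on $a^i$ and $a^k$ would vanish on their smallest common face $\sigma$, forcing $m=0$. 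Therefore the affine line $\{R'\mid\langle a^i,R'\rangle=1,\ \langle a^k,R'\rangle=0\}$ has direction lying in neither $\sigv$ nor $-\sigv$, so it meets $\sigv$ in a bounded segment containing only finitely many lattice points; ranging over the finitely many pairs $(i,k)$ completes the proof. The hard point is this last case: recognizing that ``$v^i$ interior to the path'' translates into non-adjacency of $a^i$ and $a^k$ in $\sigma$, which is exactly what makes $\sigv$ contain no nonzero functional orthogonal to both—and hence bounds the admissible set of $R$.
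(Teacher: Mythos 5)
Your proof is correct and follows the paper's strategy. For (i), the paper simply declares the claim obvious from $V(Q)\cong V(Q')$; you fill in what makes this work: the primitivity hypothesis on the edge vectors of $Q'$ forbids a lattice vertex of $Q$ from lying in the relative interior of an edge of $Q'$, which gives the bijection between components of $Q$ and edges of $Q'$ and hence the compatibility of the defining equations (the $g_{\keps,k}$'s and toric relations) under summation of the $t_i$ along components, matching the two base schemes $\bar{\CM}$. For (ii), the interior case is identical (four lattice vertices are forced, and they affinely span $[R=1]$, pinning down $R$). In the boundary case your route is a mild streamlining of the paper's: the paper splits into ``at least two lattice vertices'' (handled by solving three linear conditions to determine $R$ uniquely) and ``exactly one lattice vertex'' (handled by the bounded-segment argument), whereas you observe that $\dim(V(Q)/\ku{1})\neq 0$ always forces an interior lattice vertex $a^i$ on the compact path, and then apply the bounded-segment argument uniformly to the pair $(a^i,a^k)$ with $\langle a^k,R\rangle=0$. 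This unification is clean; it also sidesteps the implicit linear-independence check for three distinct extreme rays that the paper's first subcase relies on. The only small thing to note is that your opening reduction to primitive $R$ in (i) handles $Q'=\emptyset$, but not the degenerate case where $Q'$ has only one or two lattice vertices (so $\sigma'$ is not full-dimensional); there, as in the paper, both sides have trivial degree-$R$ deformation theory and the statement is vacuous, but a one-line remark to that effect would make the argument fully watertight.
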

\begin{proof}
\begin{itemize}
\item[(i)] This is obvious, since $V(Q)\cong V(Q')$.
\vspace{1ex}
\item[(ii)] Let $R\in\interior(\sigv\cap M)$. Then $Q:=\sigma\cap [R=1]$ is
a two dimensional polytope.
We know $\dim{T_Y^1(-R)}=\dim{V(Q)}-1$ by (\ref{ks-thm}), hence, to obtain
$\dim{T_Y^1(-R)}\ge 1$ we need $\dim{V(Q)}\ge 2$. Therefore, $Q$ has to
have at least four different components. This is equivalent to $Q$ having
at least four lattice vertices. Now for any four generating rays of sigma
there are at most one $R\in\interior(\sigv\cap M)$ yielding one on all four of
them.\\[1.5ex]
Let us now assume $R\in\partial(\sigv\cap M)$. 
If $Q$ has less than three vertices, we immediately obtain $\dim{V(Q)}\le 1$.
Otherwise $Q$ looks like:
\[
\begin{tikzpicture}[scale=0.5]
\draw(0,3)--(0,1);
\draw[dashed] (0,1) -- (1,0) -- (2,0.5);
\draw(2,0.5)--(3,3);
\draw (0,1) circle (2pt) node[anchor=north east] {$a_1$};
\draw (1,0) circle (2pt) node[anchor=north] {$a_2$};
\draw (2,0.5) circle (2pt) node[anchor=north west] {$a_3$};
\end{tikzpicture}
\]
Assume $Q$ has at least two lattice vertices. Then $R$ yields $1$ on two rays of
$\sigma$. Since $R\in\partial(\sigv\cap M)$, $R$ yields zero on at least
one ray of $\sigma$. These conditions fully determine $R$ and since $\sigma$
is spanned by finitely many rays there are only finitely many such $R$.
\\[1.5ex]
Now assume $Q$ has only one lattice vertex. If this lattice vertex is
$a_1$ or $a_3$, we immediately obtain $\dim{V(Q)}=1$. To obtain
$\dim{V(Q)}>1$ the lattice vertex has to be $a_2$. Let $a_4$ be a ray of
$\sigma$ such that $R(a_4)=0$. By the above observation we know
that $a_2$ and $a_4$ do not lie in a common two face of $\sigma$. There
are only two facets of $\sigv$ that have an infinite intersection
with the hyperplane $[a_2=1]$, i.e. those defined by $a_1$ and $a_3$.
Hence, the set $[a_2=1]\cap[a_4=0]\cap\sigv$ is bounded and
$[a_2=1]\cap[a_4=0]\cap\sigv\cap M$ is finite.
\end{itemize}
\end{proof}

Finally we provide an example to illustrate the whole theory
and, in particular, Theorem \ref{3dim}(i).
Let $N=\Z^3$ be a lattice. Define the cone $\sigma$ by
\[
\sigma:=\langle 
(0,0,1),(1,0,1),(2,1,1),(1,2,1),(1,4,2),(0,1,2)\rangle\subseteq\Q^3=N_{\Q}.
\]
We choose $R:=[0,0,1]\in M=\Z^3$ and obtain the following polygon $Q$:
\begin{center}
\begin{tikzpicture}
\draw (0,0) node[anchor=north east] {$a^1=(0,0)$}
-- (1,0) node[anchor=north west] {$a^2=(1,0)$}
-- (2,1) node[anchor=west] {$a^3=(2,1)$}
-- (1,2) node[anchor=south west] {$a^4=(1,2)$}
-- (0.5,2) node[anchor=south east] {$a^5=(1/2,2)$}
-- (0,0.5) node[anchor=south east] {$a^6=(0,1/2)$}
-- (0,0);
\draw[dashed] (1,2) -- (0,0);
\end{tikzpicture}
\end{center}
We obtain the following paths:
\[
d^1=\twovec{1}{0},\ d^2=\twovec{1}{1},\ 
d^3=\twovec{-1}{1},
\]
\[ d^4=\twovec{-\frac{1}{2}}{0},\
d^5=\twovec{-\frac{1}{2}}{-\frac{3}{2}},\ d^6=\twovec{0}{-\frac{1}{2}}.
\]
Let $Q'$ be the convex hull of the lattice vertices of $Q$, consisting of $d^1$, $d^2$, $d^3$ and the dashed line in the picture. 
The associated Gorenstein singularity
$Y'=\toric{\sigma'}$ with $\sigma'$ being the cone over $Q'$
equals the affine cone over the Del Pezzo surface of degree $8$. 
It would be interesting to know more about a general geometric relation
between the singularities $Y$ and $Y'$, i.e.\ is there a universal property
(depending on $R$)
characterizing the map $Y'\to Y$?

Now we can explicitly describe the ideal $\kI$ as defined in 
(\ref{def-higherdeg}): $Q$ equals its own (and only)
2-face. This yields the following families of polynomials:
\[
g_{1,k}(\ult)=t_1^k+t_2^k-t_3^k-\frac{1}{2}t_4^k-\frac{1}{2}t_5^k,\, k\ge 1
\]
and
\[
g_{2,k}(\ult)=t_2^k+t_3^k-\frac{3}{2}t_5^k-\frac{1}{2}t_6^k,\, k\ge 1.
\]
Additionally, we have the polynomials $t_4-t_5$ and $t_5-t_6$ for the
non-lattice vertices. We obtain
\[
\kI=(g_{1,k}(\ult),g_{2,k}(\ult)|\, k\ge 1)+(t_4-t_5,t_5-t_6).
\]
By Corollary \ref{obst-6} we know that it is enough to consider
$k\le 3$. Calculating modulo the the two last equations and hence, only considering $t_4$, the homogeneous ideal $\kI$ defining $\CM\subseteq\C^4$ is generated by:
\[
\kI = (t_2+t_3-2\cdot t_4,t_1-2\cdot t_3+t_4,t_3^2-2\cdot t_3t_4+t_4^2).
\]
We introduce the variables $w_{12}:=t_1-t_2$, $w_{23}:=t_2-t_3$ and $w_{34}:=t_3-t_4$ for the differences $t_i-t_j$. Now one can easily see that
\[
\kI = (w_{23}+2\cdot w_{34}, w_{12}+w_{23}-w_{34}, w_{34}^2)
\]
holds as predicted by Theorem \ref{def-4}. Moreover these equations define $\bar{\CM}\subseteq\C^3$.
\par
Let us now construct $V(Q)$ as described in (\ref{def-eps}). Since $Q$ has $6$ edges we obtain a description
of $V(Q)$ as a subspace of $\R^6$.

The polygon $Q$ has two non-lattice vertices, namely $a^5$ and $a^6$. These
vertices are directly connected by edge $d^5$ and together they form a component
of $Q$, shown by the dashed line in the picture. This yields the equations
$t_4=t_5$ and $t_5=t_6$ for $\ult\in\VQ$. From now on we will calculate modulo these equations and hence, only consider $t_4$.

The remaining two equations are
described by the rows of $\sum t_id^i=0$.
Since all these equations are linearly independent we obtain that $\VQ$
is a two-dimensional subspace of $\R^6$.\par
The next step is to compute the Hilbert basis $E$ of $\sigv\cap M$. To do this,
we use a program like \cite{normaliz}:
\[
\begin{split}
E=\{ &
R=[0,0,1]
,[6,-2,1]
,[1,0,0]
,[0,1,0]
,[2,-1,1]
,\\&[-1,-1,3]
,[-1,1,1]
,[0,-1,2]
,[-1,0,2]\}
\end{split}
\]
Using the elements of $E\backslash\{R\}$, we can describe
$\tautv\cap\tilde{M}$. However, since we calculate modulo $t_4=t_5$ and $t_5=t_6$ as described above, we will not denote the $\ues(c^i)$ as elements
of $\R^6$, instead we will consider the evaluation of $\ues(c^i)$ on components
of $\ult\in V$, i.e. in the case of
$\eta^*_4(c^i),\ \eta^*_5(c^i),\ \eta^*_6(c^i)$ it is only important to know their sum. This means our $\ues(c^i)$ are built up by the following formula:
\[
\ues(c^i)=[\eta^*_1(c^i),\eta^*_2(c^i),\eta^*_3(c^i),
\eta^*_4(c^i)+\eta^*_5(c^i)+\eta^*_6(c^i)]\in\R^4.
\]
\[
\begin{tabular}{c|c|c|c|c}
$i $&$c^i $&$ v(c^i) $&$ \ku{\lambda}^{c^i} $&$ \ues(c^i)$\\
\midrule
1
&[6, -2] & (0, 1/2) & (0, 0, 0, 0, 0, -1) & [0, 0, 0, 1]\\
2
&[1, 0] & (0, 0) & (0, 0, 0, 0, 0, 0) & [0, 0, 0, 0]\\
3
&[0, 1] & (0, 0) & (0, 0, 0, 0, 0, 0) & [0, 0, 0, 0]\\
4
&[2, -1] & (1/2, 2) & (0, 0, 0, 0, -1, -1) & [0, 0, 0, 1]\\
5
&[-1, -1] & (2, 1) & (1, 1, 0, 0, 0, 0) & [1, 2, 0, 0]\\
6
&[-1, 1] & (1, 0) & (1, 0, 0, 0, 0, 0) & [1, 0, 0, 0]\\
7
&[0, -1] & (1/2, 2) & (0, 0, 0, 0, -1, -1) & [0, 0, 0, 2]\\
8
&[-1, 0] & (2, 1) & (1, 1, 0, 0, 0, 0) & [1, 1, 0, 0]
\end{tabular}
\]
Using the Hilbert basis $E$ of $\sigma^{\veee}\cap M$ we want to describe the affine toric variety $Y$ as a subvariety of $\C^{|E|}=\C^9$.
To do this, consider the following exact sequence:
\[
0\to L\to \Z^9\stackrel{\pi}{\to} M\to 0,
\]
where $\pi$ is defined by mapping the $e_i\in\Z^9$ to the generators of
the Hilbert basis of $\sigv\cap M$, i.e. the matrix
\[
\pi=\left(\begin{array}{ccccccccc}
0 & 6 & 1 & 0 & 2 & -1 & -1 & 0 & -1\\
0 & -2 & 0 & 1 & -1 & -1 & 1 & -1 & 0\\
1 & 1 & 0 & 0 & 1 & 3 & 1 & 2 & 2
\end{array}\right).
\]
Let $L$ be the kernel of this matrix. We build up the
so called toric ideal 
\[
I_L:=(\ku{x}^{l^+}-\ku{x}^{l^-}\,|\, l\in L)\, \subseteq\, k[x_0,\ldots,x_8]
\]
and obtain
\[
k[\sigv\cap M]\,\cong\,k[\pi(\N^n)]\,\cong\,
^{\displaystyle {k[\ku{x}]}}\!/_{\displaystyle {I_L}} .
\]
This yields an inclusion $Y=\Spec{(k[\sigv\cap M])}\subseteq \C^9$. Now we need
to compute the generators of the ideal $I_L$, which can be easily done by using
{\sc toric.lib} of \cite{singular}.
The following code will do the calculation needed:
\begin{verbatim}
LIB "toric.lib";
ring r=0,(t,z1,z2,z3,z4,z5,z6,z7,z8),dp;
intmat pi[3][9]=
0,6,1,0,2,-1,-1,0,-1,
0,-2,0,1,-1,-1,1,-1,0,
1,1,0,0,1,3,1,2,2;
pi;
ideal I=toric_ideal(pi,"pt");
def L=mstd(I);
I=L[2];
I;
\end{verbatim}
Note that we chose the variables of the ring according to the description
given in (\ref{flat-3}),
i.e. $t$ corresponds to $R\in M$ and $z_i$ corresponds to $c^i$.
We obtain the following polynomials defining $Y$:
\[
\begin{tabular}{clcl}
$0$ & $ f_{(e^{6}+e^{7},e^{8},0,1)}$&$=$&$z_6z_7-tz_8$\\ 
$1$ & $ f_{(e^{3}+e^{7},e^{2}+e^{8},0,0)}$&$=$&$z_3z_7-z_2z_8$\\ 
$2$ & $ f_{(e^{5}+e^{6},2 e^{8},0,0)}$&$=$&$z_5z_6-z_8^2$\\ 
$3$ & $ f_{(e^{6},e^{3}+e^{8},1,0)}$&$=$&$tz_6-z_3z_8$\\ 
$4$ & $ f_{(e^{3}+e^{5},e^{8},0,1)}$&$=$&$z_3z_5-tz_8$\\ 
$5$ & $ f_{(e^{2}+e^{5},e^{7},0,1)}$&$=$&$z_2z_5-tz_7$\\ 
$6$ & $ f_{(e^{5},e^{7}+e^{8},1,0)}$&$=$&$tz_5-z_7z_8$\\ 
$7$ & $ f_{(e^{3},e^{2}+e^{6},1,0)}$&$=$&$tz_3-z_2z_6$\\ 
$8$ & $ f_{(0,e^{2}+e^{8},2,0)}$&$=$&$t^2-z_2z_8$\\ 
$9$ & $ f_{(e^{2}+2 e^{7},e^{4}+e^{5},0,0)}$&$=$&$z_2z_7^2-z_4z_5$\\ 
$10$ & $ f_{(2 e^{2}+e^{8},e^{4}+e^{6},0,0)}$&$=$&$z_2^2z_8-z_4z_6$\\ 
$11$ & $ f_{(2 e^{2}+e^{7},e^{4},0,1)}$&$=$&$z_2^2z_7-tz_4$\\ 
$12$ & $ f_{(e^{2}+e^{7},e^{4}+e^{8},1,0)}$&$=$&$tz_2z_7-z_4z_8$\\ 
$13$ & $ f_{(3 e^{4},e^{1}+e^{7},0,0)}$&$=$&$z_4^3-z_1z_7$\\ 
$14$ & $ f_{(2 e^{2},e^{3}+e^{4},1,0)}$&$=$&$tz_2^2-z_3z_4$\\ 
$15$ & $ f_{(e^{2}+2 e^{4}+e^{7},e^{1}+e^{5},0,0)}$&$=$&$z_2z_4^2z_7-z_1z_5$\\ 
$16$ & $ f_{(e^{2}+e^{3}+2 e^{4},e^{1}+e^{6},0,0)}$&$=$&$z_2z_3z_4^2-z_1z_6$\\ 
$17$ & $ f_{(2 e^{2}+2 e^{4},e^{1},0,1)}$&$=$&$z_2^2z_4^2-tz_1$\\ 
$18$ & $ f_{(e^{2}+2 e^{4},e^{1}+e^{8},1,0)}$&$=$&$tz_2z_4^2-z_1z_8$\\ 
$19$ & $ f_{(4 e^{2}+e^{4},e^{1}+e^{3},0,0)}$&$=$&$z_2^4z_4-z_1z_3$
\end{tabular}
\]
We want to compute the liftings $F_{(a,b,\alpha,\beta)}$ of the $f_{(a,b,\alpha,\beta)}$ in $A(\bar{S})[Z_1,\ldots,Z_w]$. For a given
$c\in \lat^*$,
we have to find a representation 
$[c,\ues(c)]=\sum_{\nu}p_{\nu}[c^{\nu},\ues(c^{\nu})]$, $p_{\nu}\in\Z_{\ge 0}$.
This proves difficult, because we compute the $\ues$ modulo $\VQ^{\bot}$.
It is easier to use Proposition \ref{tautco-prop} instead. If we find a linear
combination $[c,\ezs(c)]= \sum_{\nu}p_{\nu}[c^{\nu},\ezs(c^{\nu})]$, we
automatically obtain $[c,\ues(c)]=\sum_{\nu}p_{\nu}[c^{\nu},\ues(c^{\nu})]$
with the same coefficients $p_{\nu}\in\Z_{\ge 0}$. Since $\sigv$
is a pointed cone and we already know a Hilbert basis of $\sigv\cap M$ this
problem is very easy to solve.\par
Using the equations $t_4=t_5$ and $t_5=t_6$ we obtain
\[
C(Q)^{\veee}= \;^{\displaystyle \Rpos^{4} + V^\bot}\!\!\left/
_{\displaystyle V^\bot} \right.
\]
where $V^\bot$ is generated by $[1,1,-1,-1]$ and $[0,1,1,-2]$ obtained from
the edge directions of the polygon $Q$. As introduced in (\ref{flat-3}) 
we will use this description of $C(Q)^{\veee}$ for the liftings of the $f_{(a,b,\alpha,\beta)}$, i.e. the variables $t_1,\ldots,t_4$ correspond to the coordinates of $\Rpos^4$. One can easily see that the exponents of the $t_i$ in an $F_{(a,b,\alpha,\beta)}$-term sum up to the exponent of $t$ in the corresponding term of $f_{(a,b,\alpha,\beta)}$. 
\[
\begin{tabular}{clcl}
$0$ & $ F_{(e^{6}+e^{7},e^{8},0,1)}$ 
& $=$ & $Z_{6}Z_{7}-Z_{8}t_1-Z_{8}(t_{3}-t_{1})$ \\ 
& & $=$ & $Z_{6}Z_{7}-Z_{8}t_3$ \\ 
$1$ & $ F_{(e^{3}+e^{7},e^{2}+e^{8},0,0)}$ 
& $=$ & $Z_{3}Z_{7}-Z_{2}Z_{8}-(t_{4}^{2}-t_{1}t_{2})$ \\ 
& & $=$ & $Z_{3}Z_{7}-t_{4}^{2}+F_8$ \\ 
$2$ & $ F_{(e^{5}+e^{6},2e^{8},0,0)}$ 
& $=$ & $Z_{5}Z_{6}-Z_{8}^{2}$ \\ 
$3$ & $ F_{(e^{6},e^{3}+e^{8},1,0)}$ 
& $=$ & $Z_{6}t_1-Z_{3}Z_{8}-Z_{6}(t_{1}-t_{2})$ \\ 
& & $=$ & $Z_{6}t_2-Z_{3}Z_{8}$ \\ 
$4$ & $ F_{(e^{3}+e^{5},e^{8},0,1)}$ 
& $=$ & $Z_{3}Z_{5}-Z_{8}t_1-Z_{8}(t_{2}-t_{1})$ \\ 
& & $=$ & $Z_{3}Z_{5}-Z_{8}t_2$ \\ 
$5$ & $ F_{(e^{2}+e^{5},e^{7},0,1)}$ 
& $=$ & $Z_{2}Z_{5}-Z_{7}t_1-Z_{7}(t_{4}-t_{1})$ \\ 
& & $=$ & $Z_{2}Z_{5}-Z_{7}t_4$ \\ 
$6$ & $ F_{(e^{5},e^{7}+e^{8},1,0)}$ 
& $=$ & $Z_{5}t_1-Z_{7}Z_{8}-Z_{5}(t_{1}-t_{3})$ \\ 
& & $=$ & $Z_{5}t_3-Z_{7}Z_{8}$ \\ 
$7$ & $ F_{(e^{3},e^{2}+e^{6},1,0)}$ 
& $=$ & $Z_{3}t_1-Z_{2}Z_{6}$ \\ 
$8$ & $ F_8:=F_{(0,e^{2}+e^{8},2,0)}$ 
& $=$ & $t_1^2-Z_{2}Z_{8}-(t_{1}^{2}-t_{1}t_{2})$ \\ 
& & $=$ & $t_1t_2-Z_{2}Z_{8}$ \\ 
$9$ & $ F_{(e^{2}+2e^{7},e^{4}+e^{5},0,0)}$ 
& $=$ & $Z_{2}Z_{7}^{2}-Z_{4}Z_{5}$ \\ 
$10$ & $ F_{(2e^{2}+e^{8},e^{4}+e^{6},0,0)}$ 
& $=$ & $Z_{2}^{2}Z_{8}-Z_{4}Z_{6}-Z_{2}(t_{1}t_{2}-t_{1}t_{4})$ \\ 
& & $=$ & $Z_{2}t_1t_4-Z_{4}Z_{6}-Z_{2}F_8$ \\ 
$11$ & $ F_{(2e^{2}+e^{7},e^{4},0,1)}$ 
& $=$ & $Z_{2}^{2}Z_{7}-Z_{4}t_1-Z_{4}(t_{4}-t_{1})$ \\ 
& & $=$ & $Z_{2}^{2}Z_{7}-Z_{4}t_4$ \\ 
$12$ & $ F_{(e^{2}+e^{7},e^{4}+e^{8},1,0)}$ 
& $=$ & $Z_{2}Z_{7}t_1-Z_{4}Z_{8}-Z_{2}Z_{7}(t_{1}-t_{3})$ \\ 
& & $=$ & $Z_{2}Z_{7}t_3-Z_{4}Z_{8}$ \\ 
$13$ & $ F_{(3e^{4},e^{1}+e^{7},0,0)}$ 
& $=$ & $Z_{4}^{3}-Z_{1}Z_{7}$ \\ 
$14$ & $ F_{14}:=F_{(2e^{2},e^{3}+e^{4},1,0)}$ 
& $=$ & $Z_{2}^{2}t_1-Z_{3}Z_{4}-Z_{2}^{2}(t_{1}-t_{4})$ \\ 
& & $=$ & $Z_{2}^{2}t_4-Z_{3}Z_{4}$ \\ 
$15$ & $ F_{(e^{2}+2e^{4}+e^{7},e^{1}+e^{5},0,0)}$ 
& $=$ & $Z_{2}Z_{4}^{2}Z_{7}-Z_{1}Z_{5}$ \\ 
$16$ & $ F_{(e^{2}+e^{3}+2e^{4},e^{1}+e^{6},0,0)}$ 
& $=$ & $Z_{2}Z_{3}Z_{4}^{2}-Z_{1}Z_{6}-Z_{2}^{3}Z_{4}(t_{4}-t_{1})$ \\ 
& & $=$ & $Z_{2}^3Z_{4}t_1-Z_{1}Z_{6}-Z_{2}Z_{4}F_{14}$ \\ 
$17$ & $ F_{(2e^{2}+2e^{4},e^{1},0,1)}$ 
& $=$ & $Z_{2}^{2}Z_{4}^{2}-Z_{1}t_1-Z_{1}(t_{4}-t_{1})$ \\ 
& & $=$ & $Z_{2}^{2}Z_{4}^{2}-Z_{1}t_4$ \\ 
$18$ & $ F_{(e^{2}+2e^{4},e^{1}+e^{8},1,0)}$ 
& $=$ & $Z_{2}Z_{4}^{2}t_1-Z_{1}Z_{8}-Z_{2}Z_{4}^{2}(t_{1}-t_{3})$ \\ 
& & $=$ & $Z_{2}Z_{4}^{2}t_3-Z_{1}Z_{8}$ \\ 
$19$ & $ F_{(4e^{2}+e^{4},e^{1}+e^{3},0,0)}$ 
& $=$ & $Z_{2}^{4}Z_{4}-Z_{1}Z_{3}$. 
\end{tabular}
\]
After reformulating the equations one easily notes that the ideal is indeed toric. To achieve positive exponents
in the $t_i$ it was necessary to compute modulo $V^{\bot}$.
These liftings together with the equations of $\kI$ describe a family contained
in \[\C^9\times\C^4\
\stackrel{\mbox{pr}_2}{\longrightarrow}\ ^{\displaystyle {\C^4}} \!/
_{\displaystyle {\C\cdot (\ku{1})}}.\]

{\small

}

\end{document}